\documentclass[a4paper, 12pt]{amsart}

\usepackage{cite}

\pdfoutput=1
\usepackage{latexsym, amsmath, amssymb, amsthm, mathrsfs, bm, mathtools,comment}

\usepackage[mathscr]{eucal}
\usepackage{subfigure}

\usepackage{array}
\usepackage{tikz}
\usetikzlibrary{cd}
%\usetikzlibrary{arrows,shapes,positioning,calc,patterns,cd}
\usepackage{aliascnt}
%\usetikzlibrary{decorations.pathreplacing}
\usepackage{graphicx}
\usepackage[T1]{fontenc}
\usepackage{mathptmx}
\usepackage{microtype}
\usepackage{enumerate, enumitem}
\usepackage[colorlinks=true,linkcolor=blue,urlcolor=blue]{hyperref}

\usepackage[onehalfspacing]{setspace}

\usepackage[inner=2.4cm,outer=2.4cm, bottom=3.2cm]{geometry}

\DeclareMathAlphabet{\altmathcal}{OMS}{cmsy}{m}{n}

%\newcommand\hmmax{0}
%\newcommand\bmmax{0}

%\usepackage{mathptmx, amscd, amssymb, amsmath, enumerate, slashbox, %hyperref,float, mathtools}
%\def\baselinestretch{1.15}
%\usepackage{eulervm}
%\usepackage{paralist}
%\usepackage{aliascnt}
%\usepackage{blkarray}
%\usepackage{mathbbol}
%\usepackage[inner=2.3cm,outer=2.3cm, bottom=3.3cm]{geometry}
%\usepackage{setspace}
%\usepackage[colorlinks=true,linkcolor=blue,urlcolor=blue,pagebackref]{hyperref}
%\usepackage[initials]{amsrefs}
%\usepackage{marginnote}
%\usepackage{tikz}
%\usetikzlibrary{matrix}
%\usetikzlibrary{arrows,calc}
\allowdisplaybreaks

%\setcitestyle{numbers}
\usepackage{xcolor, color, soul}

\usepackage{color}
\usepackage[all]{xy}
\usepackage{enumerate}
\usepackage{mathbbol,mathtools}

%\usepackage{fullpage}
%\usepackage[top=1in, bottom=1in, left=1in, right=1in]{geometry}
%\usepackage{mathrsfs}

%\usepackage{stmaryrd}

%\usepackage{mathabx}

% THEOREM Environments ---------------------------------------------------

%%************* New Theorems***************************************
%% Italic text theorems
\newtheorem{theorem}{Theorem}[section]

\newtheorem{headthm}{Theorem}

\newaliascnt{headcor}{headthm}
\newtheorem{headcor}[headcor]{Corollary}
\aliascntresetthe{headcor}

\newaliascnt{headconj}{headthm}

\aliascntresetthe{headconj}

\newaliascnt{corollary}{theorem}
\newtheorem{corollary}[corollary]{Corollary}
\aliascntresetthe{corollary}

\newaliascnt{claim}{theorem}

\aliascntresetthe{claim}

\newaliascnt{lemma}{theorem}
\newtheorem{lemma}[lemma]{Lemma}
\aliascntresetthe{lemma}

\newaliascnt{conjecture}{theorem}

\aliascntresetthe{conjecture}

\newaliascnt{proposition}{theorem}
\newtheorem{proposition}[proposition]{Proposition}
\aliascntresetthe{proposition}

%% Non italic text theorems
\theoremstyle{definition}
\newaliascnt{definition}{theorem}

\aliascntresetthe{definition}

\newaliascnt{notation}{theorem}

\aliascntresetthe{notation}

\newaliascnt{example}{theorem}
\newtheorem{example}[example]{Example}
\aliascntresetthe{example}

\newaliascnt{examples}{theorem}

\aliascntresetthe{examples}

\newaliascnt{remark}{theorem}
\newtheorem{remark}[remark]{Remark}
\aliascntresetthe{remark}

\newaliascnt{fact}{theorem}

\aliascntresetthe{fact}

\newaliascnt{question}{theorem}

\aliascntresetthe{question}

\newaliascnt{questions}{theorem}

\aliascntresetthe{questions}

\newaliascnt{problem}{theorem}

\aliascntresetthe{problem}

\newaliascnt{construction}{theorem}

\aliascntresetthe{construction}

\newaliascnt{setup}{theorem}

\aliascntresetthe{setup}

\newaliascnt{algorithm}{theorem}

\aliascntresetthe{algorithm}

\newaliascnt{observation}{theorem}

\aliascntresetthe{observation}

\newaliascnt{discussion}{theorem}

\aliascntresetthe{discussion}

\newaliascnt{defprop}{theorem}

\aliascntresetthe{defprop}

\def\equationautorefname~#1\null{(#1)\null}
\def\sectionautorefname~#1\null{Section #1\null}
\def\subsectionautorefname~#1\null{\S #1\null}

\def\trdeg{{\rm trdeg}}

\definecolor{myorange}{RGB}{255, 160, 70}

%%%%%%%%%%%%%%%%%%%%%%%%%%%%%%%%%%%%%%%%
%%%%%%%%%%%%%%%%%%%%%%%%%%%%%%%%%%%%%%%%%%%%%%%%%%

%  ** new commands **

%\newcommand\Wtilde[1]{\stackrel{\sim}{\smash{#1}\rule{0pt}{1.1ex}}}

\usepackage[normalem]{ulem}
\newcommand{\stkout}[1]{\ifmmode\text{\sout{\ensuremath{#1}}}\else\sout{#1}\fi}

\def \depth{{\operatorname{depth\, }}}

\def \Spec{{\operatorname{Spec}}}

\def \Proj{{\operatorname{Proj\, }}}

\def \trdeg{{\operatorname{trdeg}}}

\def \ord{{\operatorname{ord}}}

\def \codim{{\operatorname{codim}}}
\def \divv{{\operatorname{div}}}

\DeclareMathOperator{\Div}{Div}

\def \f1{\mathbf{1}}

\newcommand{\kk}{\mathbb{k}}

\def\xi{x}

\def\ls{\leqslant}
\def\gs{\geqslant}

\def\fm{\mathfrak{m}}

\def\fp{\mathfrak{p}}
\def\fq{\mathfrak{q}}

\def\fq{\mathfrak{q}}

\def\fm{\mathfrak{m}}

\def\m{\mathfrak{m}}

\def \PP{\mathbb P}

\def \QQ{\mathbb Q}

\def \NN{ {\mathbb Z}_{\gs 0}}

\def \ZZ{\mathbb Z}

\def \L{\mathcal L}

\begin{document}

\title[Multiplicities and degree functions  in local rings via intersection products]{Multiplicities and degree functions  in local rings via intersection products}

\author[Steven Dale Cutkosky]{Steven Dale Cutkosky$^1$}
\address{Steven Dale Cutkosky\\
	Department of Mathematics, University of Missouri, Columbia,
	MO 65211, USA. \emph{Email:} {\rm cutkoskys@malnourished}}
\thanks{$^{1}$ The first author was partially funded by NSF Grant DMS \#2348849.}

\author[Jonathan Monta{\~n}o]{Jonathan Monta{\~n}o$^2$}
\address{Jonathan Monta{\~n}o\\School of Mathematical and Statistical Sciences, Arizona State University, P.O. Box 871804, Tempe, AZ 85287-18041, USA. \emph{Email:} {\rm montano@asu.edu}}
\thanks{$^{2}$ The second author was partially funded by NSF Grant DMS \#2401522.}

\begin{abstract}
 We prove a theorem on the intersection theory over a Noetherian local ring $R$, which gives a new proof of a classical theorem of Rees about degree functions. To obtain this, we define an intersection product on schemes that are proper and birational over such rings  $R$, using the theory of rational equivalence developed by Thorup, and the Snapper-Mumford-Kleiman intersection theory for proper schemes over an Artinian local ring. Our development of this product is essentially self-contained. As a central component of the proof of our main theorem, we extend to arbitrary Noetherian local rings a formula by Ramanujam that computes Hilbert-Samuel multiplicities. In the final section, we express mixed multiplicities  in terms of intersection theory and conclude from this that they satisfy a certain multilinearity condition. Then we interpret some theorems of Rees and Sharp and of Teissier about mixed multiplicities over  $2$-dimensional excellent local rings in terms of our intersection product.
\end{abstract}

\keywords{}
\subjclass[2020]{Primary: 13H15, 14C17.}

\maketitle

\section{Introduction}
In this article we consider an arbitrary $d$-dimensional (Noetherian) local ring $(R,\fm_R,\kk)$, and interpret Hilbert-Samuel multiplicities, degree functions, and 
mixed multiplicities of $\fm_R$-primary ideals in $R$ via intersection products on suitable birational proper $\Spec(R)$-schemes $Y$. We give new geometric proofs of some classical theorems in commutative algebra about multiplicities as corollaries of  more general theorems  on the  intersection theory of exceptional Cartier divisors on  such schemes  $Y$.

  We define in  \autoref{SecIntProd} an intersection product $(-)_R$ on schemes that are proper and birational over   $R$. 
For this, we use the  theory of rational equivalence of Thorup   for finite type schemes over a Noetherian base \cite{Thor}. We also  adapt  some of the material from the intersection theory over fields in 
Fulton's book \cite{Fl} which extends to our more general setting. This allows us to   reduce to the case of proper schemes over a field (the residue field of $R$), so that we may  use the intersection theory of Snapper \cite{Sn}, Mumford \cite{Mum}, and Kleiman \cite{Kl}, \cite{Kl2} to make  computations. We  discuss intersection products and the contents of  \autoref{SecIntProd} in more detail at the end of this introduction.

 The (Hilbert-Samuel) multiplicity $e(I)$ of an $\fm_R$-primary ideal  $I$ is defined to be 
$$
e(I)=\lim_{n\rightarrow \infty}d!\frac{\ell_R(R/I^n)}{n^d}
$$
where $\ell_R(-)$ is the length of an $R$-module. In \cite{Ra} Ramanujam obtained a formula that computes multiplicities through intersection products. Different versions of this formula have appeared in the literature since then under various particular assumptions.  In \mbox{\cite{L}} the formula is stated for the case that 
$R$ is  the local ring of  a closed point on an algebraic scheme over an algebraically closed field. In \mbox{\cite{BLQ}} the formula is proven when $R$ is an analytically irreducible domain and $Y$ is normal.   Related formulas are also found in \mbox{\cite{KT}}, \cite{KV}, \cite{GGV}, and \cite{CR}.  In our first result  we extend this theorem to the full generality of   Noetherian local rings.
\begin{headthm}[{\autoref{TheoremMultInt}}]
Let $(R,\fm_R,\kk)$ be a $d$-dimensional local ring  and $I\subset R$ be an $\m_R$-primary ideal. 
 Let $\pi:Y\rightarrow \Spec(R)$ be a birational projective morphism such that $I\mathcal O_Y$ is invertible. Then 
\begin{equation*}
e(I)=-((I\mathcal O_Y)^d)_R.
\end{equation*}
\end{headthm}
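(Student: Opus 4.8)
The plan is to compute both sides of the identity by reducing, for $n \gg 0$, the length $\ell_R(R/I^n)$ and the self-intersection number to coherent-cohomological data supported on the closed fibre, and then to invoke the asymptotic Riemann--Roch theorem of Snapper--Kleiman on that fibre, regarded as a proper scheme over the residue field $\kk$. First I would set $\mathcal O_Y(-E) := I\mathcal O_Y$, so that $E$ is an effective Cartier divisor whose support lies in the closed fibre $Y_0 := \pi^{-1}(\fm_R)$ and $I^n\mathcal O_Y = \mathcal O_Y(-nE)$. Since $I\mathcal O_Y$ is trivial away from $Y_0$, the quotient $\mathcal O_{nE} := \mathcal O_Y/\mathcal O_Y(-nE)$ is supported on $Y_0$; hence each higher direct image $R^i\pi_*\mathcal O_{nE}$ has finite length over $R$, and the Euler characteristic $\chi_R(\mathcal O_{nE}) := \sum_i (-1)^i \ell_R(R^i\pi_*\mathcal O_{nE})$ is a well-defined integer.

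The core computation uses the finite filtration of $\mathcal O_{nE}$ with successive quotients $\mathcal O_Y(-jE)/\mathcal O_Y(-(j+1)E)$ for $0 \leq j < n$. Each quotient is isomorphic, as a coherent sheaf on the closed fibre, to $\bigl(\mathcal O_Y(E)|_E\bigr)^{\otimes(-j)}$, hence lives on a proper $\kk$-scheme of dimension $d-1$. Applying Snapper's theorem over $\kk$ to $N := \mathcal O_Y(E)|_E$ shows that $\chi_R\bigl(\mathcal O_Y(-jE)/\mathcal O_Y(-(j+1)E)\bigr)$ agrees, for $j \gg 0$, with a polynomial of degree $d-1$ whose leading coefficient is $\tfrac{(-1)^{d-1}}{(d-1)!}(E^{d})$, where $(E^d)$ is the top self-intersection computed on $Y_0$ over $\kk$. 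Summing over $j$ gives
\[
\chi_R(\mathcal O_{nE}) = \frac{(-1)^{d-1}}{d!}\,(E^{d})\, n^{d} + O(n^{d-1}).
\]
By the construction of the product in \autoref{SecIntProd}, this fibre computation equals $(E^d)_R$, and by multilinearity $((I\mathcal O_Y)^d)_R = (-1)^d (E^d)_R$, so the displayed leading coefficient is precisely $-\tfrac{1}{d!}\,((I\mathcal O_Y)^d)_R$.

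It then remains to match $\chi_R(\mathcal O_{nE})$ with $\ell_R(R/I^n)$ up to lower order. Pushing forward the sequence $0\to \mathcal O_Y(-nE)\to \mathcal O_Y\to \mathcal O_{nE}\to 0$ and comparing it with $0\to I^n\to R\to R/I^n\to 0$ through the natural map $R\to \pi_*\mathcal O_Y$, one finds that $\ell_R(R/I^n)$ and $\ell_R(\pi_*\mathcal O_{nE})$ differ only by lengths of correction modules built from $\pi_*\mathcal O_Y/R$, from $\pi_*(I^n\mathcal O_Y)/I^n$, and from the image of $\pi_*\mathcal O_{nE}$ in $R^1\pi_*(I^n\mathcal O_Y)$. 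This is exactly where the extension of Ramanujam's formula to arbitrary Noetherian local rings enters: it guarantees that all of these discrepancies, together with $\sum_{i\geq 1}(-1)^i\ell_R(R^i\pi_*\mathcal O_{nE})$, grow at most like $O(n^{d-1})$, so that $\ell_R(R/I^n) = \chi_R(\mathcal O_{nE}) + O(n^{d-1})$. Combining this with the previous paragraph and the definition of $e(I)$ as $\lim_n d!\,\ell_R(R/I^n)/n^d$ yields $e(I) = (-1)^{d-1}(E^d)_R = -((I\mathcal O_Y)^d)_R$.

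I expect the main obstacle to be precisely this last comparison in full generality. When $R$ is non-normal the birational morphism need not satisfy $\pi_*\mathcal O_Y = R$, and when $R$ is neither a domain nor equidimensional the control of the integral-closure discrepancy between $\pi_*(I^n\mathcal O_Y)\cap R$ and $I^n$, as well as the uniform bounding of the higher direct images, becomes delicate; absorbing all of these into the error term $O(n^{d-1})$ is the substance of the generalized Ramanujam formula. A secondary technical point, which I would isolate as a compatibility lemma, is that Thorup's intersection product on $Y$ over $R$ restricts to the Snapper--Kleiman intersection number on the closed fibre over $\kk$; this is what licenses the identification of $(E^d)$ with $(E^d)_R$ used above.
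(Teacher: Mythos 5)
There is a genuine gap, and it sits exactly where you predict: the comparison $\ell_R(R/I^n)=\chi_R(\mathcal O_{nE})+O(n^{d-1})$. You propose to control the discrepancies coming from $\pi_*\mathcal O_Y/R$, from $\pi_*(I^n\mathcal O_Y)/I^n$, and from the higher direct images by invoking ``the extension of Ramanujam's formula to arbitrary Noetherian local rings'' --- but that extension \emph{is} \autoref{TheoremMultInt}, the statement you are trying to prove, so the argument is circular. Nothing in the paper prior to this theorem bounds those correction modules, and establishing such bounds (via Serre vanishing for the $\pi$-ample sheaf $I\mathcal O_Y$, the identification $\Gamma(Y,I^n\mathcal O_Y)=I^n$ for $n\gg 0$, and the finiteness of $\ell_R(\pi_*\mathcal O_Y/R)$) is real work that your sketch does not carry out. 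Your first half --- filtering $\mathcal O_{nE}$ by the quotients $\mathcal O_Y(-jE)/\mathcal O_Y(-(j+1)E)$ and applying Snapper's theorem on the closed fibre --- is sound and close in spirit to what the paper does, and the sign bookkeeping $((I\mathcal O_Y)^d)_R=(-1)^d(E^d)_R$ is correct.

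The paper's proof sidesteps the problematic comparison entirely by never touching $\ell_R(R/I^n)$. It computes $e(I)$ from the first differences $\ell_R(I^n/I^{n+1})$ and observes that, with $Z=\Proj(\oplus_{n\gs 0}I^n/I^{n+1})$ the exceptional Cartier divisor of the blowup, one has the \emph{exact} equality $H^0(Z,\mathcal O_Z(n))=I^n/I^{n+1}$ for $n\gg 0$ (the standard Proj fact, \cite[Exercise II.5.9]{H}); ampleness of $\mathcal L_Y\otimes\mathcal O_Z$ then turns $H^0$ into an Euler characteristic, Snapper's theorem over the Artinian ring $R/I$ gives the degree-$(d-1)$ asymptotics, and \autoref{LocIntRmk} converts the Snapper number into $-(\mathcal L_Y^d)_R$. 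No direct images of sheaves on $Y$, no normality, and no integral-closure discrepancies ever enter. Finally, note that the theorem as stated allows an arbitrary proper birational $Y$ with $I\mathcal O_Y$ invertible; the paper first proves the formula on the blowup and then transfers it by the birational invariance of the product (\autoref{ThmR1}), a reduction your proposal also omits. If you want to salvage your route, you must replace the appeal to the generalized Ramanujam formula by an honest proof of the $O(n^{d-1})$ bounds, or better, switch to the first-difference formulation where the comparison becomes an equality for large $n$.
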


The starting point of this paper was the realization that it was possible to use  intersection theory to give a simple and intuitive geometric proof of a celebrated theorem of Rees on degree functions.
Rees'  theorem is as follows:

\begin{theorem}[{Rees, \cite[\S 2]{Re}, \cite[Theorem 9.31]{Re2}}]\label{ReesThm*'} Let $(R,\fm_R,\kk)$ be an analytically unramified  local  domain   and $I\subset R$ be an $\fm_R$-primary ideal. Then the degree function $d_I: R\setminus\{0\}\to \NN$ given by $d_I(x)=e(I(R/xR))$ for $0\ne x$ can be expressed as
\begin{equation}\label{eq:Rees_Thm'}
d_I(x)=\sum_{i=1}^rd_i(I)v_i(x),
\end{equation}
where $v_1,\ldots, v_r$ are the Rees valuations of $I$ and $d_i(I)\in \ZZ_{\gs 0}$ for all $i$.
\end{theorem}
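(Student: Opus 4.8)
The plan is to prove Rees's theorem on degree functions as a direct corollary of the intersection-theoretic multiplicity formula (Theorem A, i.e.\ \autoref{TheoremMultInt}), by reducing the computation of $d_I(x)=e(I(R/xR))$ to an intersection product on a single well-chosen birational model $Y$ that simultaneously resolves $I$ and carries the divisor of $x$. First I would choose $\pi:Y\to\Spec(R)$ to be a birational projective morphism on which $I\mathcal{O}_Y$ is invertible; by taking $Y$ to dominate the normalized blowup of $I$, I can arrange that the Rees valuations $v_1,\dots,v_r$ of $I$ correspond to the components $E_1,\dots,E_r$ of the exceptional fiber (the centers of the order-valuations determined by the irreducible components of $V(I\mathcal{O}_Y)$), so that $-I\mathcal{O}_Y$ is represented by an effective exceptional Cartier divisor $\sum_i a_i E_i$ with $a_i=v_i(I)$.

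The key computation is to apply Theorem A to the ideal $I(R/xR)$ on the scheme obtained by restricting $\pi$ to the strict transform (or total transform) of $V(x)$. Concretely, $x$ determines a principal Cartier divisor $\mathrm{div}(x\mathcal{O}_Y)$ on $Y$, whose exceptional part records the valuations $v_i(x)$, so that $\mathrm{div}(x\mathcal{O}_Y)=\tilde{D}_x+\sum_i v_i(x)E_i$, where $\tilde{D}_x$ is the strict transform of $\Spec(R/xR)$. Since $R/xR$ is $(d-1)$-dimensional and $\pi$ restricts to a birational projective morphism onto $\Spec(R/xR)$ resolving $I(R/xR)$, Theorem A gives
\begin{equation*}
d_I(x)=e(I(R/xR))=-\bigl((I\mathcal{O}_Y)^{d-1}\cdot \tilde{D}_x\bigr)_R.
\end{equation*}
I would then expand $\tilde{D}_x=\mathrm{div}(x\mathcal{O}_Y)-\sum_i v_i(x)E_i$ inside the intersection product. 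Because $\mathrm{div}(x\mathcal{O}_Y)$ is the divisor of a \emph{global} rational function (it is $\pi^{*}$ of a principal divisor on the base), its intersection against $(I\mathcal{O}_Y)^{d-1}$ vanishes by the projection formula / the fact that intersecting with a rationally-trivial class on a proper birational model over $R$ contributes $0$ to $(-)_R$. This leaves
\begin{equation*}
d_I(x)=\sum_{i=1}^r v_i(x)\,\bigl(-((I\mathcal{O}_Y)^{d-1}\cdot E_i)_R\bigr),
\end{equation*}
and setting $d_i(I):=-((I\mathcal{O}_Y)^{d-1}\cdot E_i)_R$ yields the desired formula \eqref{eq:Rees_Thm'}, with the coefficients manifestly independent of $x$.

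Two points require care and constitute the main obstacles. The harder one is the \textbf{nonnegativity and integrality} of the coefficients $d_i(I)\in\ZZ_{\gs 0}$: integrality follows because the intersection numbers are computed on proper schemes over the residue field $\kk$ via Snapper--Mumford--Kleiman theory and hence are integers, but nonnegativity requires a positivity argument—I would show $-((I\mathcal{O}_Y)^{d-1}\cdot E_i)_R\gs 0$ using that $-I\mathcal{O}_Y$ is relatively nef/effective (it is $\pi$-ample away from the exceptional locus after possibly blowing up further), so intersecting an effective exceptional divisor against powers of an ample-type class is nonnegative, invoking Kleiman's positivity on the proper $\kk$-schemes $E_i$. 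The second, more technical obstacle is \textbf{justifying the reduction to $R/xR$}: I must verify that the strict transform $\tilde{D}_x\to\Spec(R/xR)$ is genuinely birational projective and that $I\mathcal{O}_{\tilde{D}_x}$ is invertible, so that Theorem A applies on this $(d-1)$-dimensional base—this is where the analytically unramified hypothesis enters, guaranteeing that $R/xR$ (or its reduction along the components) behaves well under passing to the normalized blowup and that the Rees valuations of $I$ restrict to the relevant valuations $v_i$. Handling the subtleties of embedded components of $V(x)$ and ensuring compatibility of the restricted intersection product with $(-)_R$ via the appropriate excess/projection formulas from \autoref{SecIntProd} is the place I expect the real work to lie.
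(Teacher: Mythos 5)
Your proposal follows essentially the same route as the paper: work on the normalized blowup $Y=\Proj(\oplus_{n\gs 0}\overline{I^n})$, where the Rees valuations are realized by the integral components $E_i$ of $\pi^{-1}(\fm_R)$ (\autoref{Reesval}), use the rational triviality of the principal cycle $[\divv(x)]=[T^*]+\sum_i v_i(x)[E_i]$ to trade the strict transform for the exceptional components (\autoref{Theorem2}), and apply \autoref{TheoremMultInt} to the birational projective morphism $T^*\rightarrow \Spec(R/xR)$ (\autoref{CorThm2}). The only slip is a sign in your final display: the correct coefficient is $d_i(I)=+\big((I\mathcal O_Y)^{d-1}\cdot E_i\big)_R$, and it is this sign that makes nonnegativity (indeed positivity when $\dim E_i=d-1$) follow from the ampleness of $I\mathcal O_Y$ restricted to the proper $\kk$-scheme $E_i$, exactly as in the paper's proof of \autoref{mval}.
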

Rees valuations are defined and their theory developed in \cite{R5} and \cite[\S 10]{HS}. Rees cites Samuel's paper \cite{Sam} for the beginning of the theory of degree functions.

We  prove the following theorem about intersection products from which we deduce in \autoref{CorThm2} a geometric proof of \autoref{ReesThm*'}.

\begin{headthm}[{\autoref{Theorem2}}]\label{Theorem2'} Let   $(R,\fm_R,\kk)$ be an  analytically unramified local domain  of dimension $d$. Let $I$ be an $\fm_R$-primary ideal of $R$ and  $\pi:Y=\Proj(\oplus_{n\gs 0}\overline{I^n})\rightarrow\Spec(R)$ be the natural projective morphism.  Suppose that $F_1,\ldots, F_{d-1}$ are Cartier divisors on $Y$ with support above $\m_R$.  Then for any $0\ne x\in R$ we have
$$
-(F_1\cdot\ldots\cdot F_{d-1}\cdot T^*)_R=\sum_{E}v_E(x)(F_1\cdot\ldots
\cdot F_{d-1}\cdot E)_{R},
$$
where $T^*$ is the strict transform of $T=\Spec(R/xR)$ in $Y$ and
 $E$ ranges over the integral components of $\pi^{-1}(\fm_R)$. The local rings $\mathcal O_{Y,E}$ are discrete valuation rings and the canonical valuations $v_E$ of these $E$ are the Rees valuations of $I$. 
\end{headthm}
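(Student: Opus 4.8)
The plan is to reduce the formula to the single geometric fact that the principal Cartier divisor cut out by $x$ on $Y$ is rationally trivial, and then to split its associated Weil cycle into a strict-transform part and an exceptional part. Throughout I would use the properties of the product $(-)_R$ established in \autoref{SecIntProd}: its linearity in the cycle entry, the agreement between intersecting a Cartier divisor and capping with its associated Weil divisor on the normal variety $Y$, and the vanishing of the degree of a cycle supported over $\fm_R$ against a trivial invertible sheaf (which is where one descends to the Snapper--Mumford--Kleiman theory over $\kk$).

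First I would record the divisorial decomposition. Since $R$ is a domain and $x\neq 0$, the element $x$ is a nonzero rational function on the birational scheme $Y$, so $\divv_Y(x)$ is a principal Cartier divisor and $\mathcal O_Y(\divv_Y(x))\cong\mathcal O_Y$. As $Y=\Proj(\oplus_{n\gs 0}\overline{I^n})$ is the normalized blowup of $I$, it is normal, hence each local ring $\mathcal O_{Y,E}$ at a codimension-one point $E$ of $\pi^{-1}(\fm_R)$ is a DVR with valuation $v_E$; these are exactly the Rees valuations of $I$ by construction \cite{R5},\cite[\S 10]{HS}. Writing the Weil divisor associated to $\divv_Y(x)$ on the normal $Y$ and separating the prime components lying over $\fm_R$ from the remaining ones, I obtain
$$[\divv_Y(x)]=[T^*]+\sum_E v_E(x)\,[E],$$
where the non-exceptional part is the strict transform cycle $[T^*]$, the coefficient of each exceptional $E$ being $\ord_{\mathcal O_{Y,E}}(x)=v_E(x)$. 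That $[T^*]$ deserves the name ``strict transform of $T$'' is confirmed by proper pushforward: $\pi_*[E]=0$ for dimension reasons, while $\pi_*[\divv_Y(x)]=[\divv_{\Spec R}(x)]=[T]$ because $\pi$ is birational, so $\pi_*[T^*]=[T]$.

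Next I would intersect with $F_1\cdots F_{d-1}$. The class $\gamma:=(F_1\cdot\ldots\cdot F_{d-1})\cdot[Y]$ is a $1$-cycle supported on $|F_1|\cap\cdots\cap|F_{d-1}|\subseteq\pi^{-1}(\fm_R)$, which is projective over $\kk$. Because $\mathcal O_Y(\divv_Y(x))$ restricts trivially to this proper fibre, the degree of $\divv_Y(x)\cdot\gamma$ over $R$ vanishes; concretely, on each integral curve $C$ occurring in $\gamma$ the restriction of $x$ has divisor of degree $0$, which is the classical statement that a principal divisor on a proper curve over a field has degree zero. Combining this vanishing with the compatibility between the Cartier intersection of $\divv_Y(x)$ and capping with its Weil cycle $[\divv_Y(x)]$, and with linearity of $(-)_R$ in the cycle entry, the displayed decomposition yields
$$0=\big(F_1\cdot\ldots\cdot F_{d-1}\cdot T^*\big)_R+\sum_E v_E(x)\,\big(F_1\cdot\ldots\cdot F_{d-1}\cdot E\big)_R,$$
and rearranging gives the asserted formula.

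I expect the main obstacle to be the second displayed identity, that is, justifying that the self-contained product $(-)_R$ sends the principal divisor $\divv_Y(x)$ to a class of degree $0$, together with the fact that intersecting the Cartier divisor $\divv_Y(x)$ with $F_1\cdots F_{d-1}$ may be replaced by intersecting its associated Weil cycle. Both are instances of properties of the intersection product that I would need in hand from \autoref{SecIntProd}; the essential content is the reduction, via the support of $\gamma$ over the closed point, to the intersection theory of a proper scheme over the residue field $\kk$, where the vanishing of the degree of a principal divisor is classical. Once these two points are granted, the remainder is the bookkeeping of the Weil-divisor decomposition and linearity.
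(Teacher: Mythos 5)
Your proposal is correct and takes essentially the same route as the paper: both proofs decompose the principal divisor of $x$ on the normalized blowup as $[T^*]+\sum_E v_E(x)[E]$ (using that each $\mathcal O_{Y,E}$ is a DVR realizing a Rees valuation, i.e.\ \autoref{Reesval}), intersect with $F_1,\ldots,F_{d-1}$, and conclude from the triviality of $\divv_Y(x)$. The only difference is where the vanishing is extracted: the paper notes that $[\divv(x)]\in P_{-1}(Y/S)$ is already zero in $A_{-1}(Y/S)$, so the intersection vanishes in $A_{-d}(Y_{\kk}/S)$ with no commutativity needed, whereas you first form the $1$-cycle $\gamma$ over the closed fibre, commute $\divv_Y(x)$ past the $F_i$, and use that a trivial line bundle has degree zero on a proper curve over $\kk$ --- only note that when $x\in\fm_R$ every such curve lies in $|\divv_Y(x)|$, so this last step must be phrased via the well-definedness of the class $[j^*\divv_Y(x)]$ rather than by literally restricting $x$ to the curve.
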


As a corollary, we have the following geometric interpretation of  degree functions.

\begin{headcor}[{\autoref{CorThm2}}]\label{CorThm2'}  Let   $(R,\fm_R,\kk)$ be an  analytically unramified local domain  of dimension $d$.  Let $I$ be an $\fm_R$-primary ideal of $R$ and $\pi:Y=\Proj(\oplus_{n\gs 0}\overline{I^n})\rightarrow\Spec(R)$ be the natural projective morphism
 and let $\mathcal L_Y =I\mathcal O_Y$.  Then for any $0\ne x\in R$ we have
$$
e(I(R/xR))=\sum_{E} v_E(x)(\mathcal L_Y^{d-1}\cdot E)_{R},
$$
where  $E$ ranges over the  integral components of $\pi^{-1}(\fm_R)$.  The local rings $\mathcal O_{Y,E}$ are discrete valuation rings and the canonical valuations $v_E$ of these $E$ are the Rees valuations of $I$. 
\end{headcor}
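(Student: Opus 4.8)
The plan is to derive \autoref{CorThm2'} by specializing \autoref{Theorem2'} to the case where all the divisors $F_i$ are taken to be the (divisor of the) invertible sheaf $\mathcal L_Y=I\mathcal O_Y$, and then to identify the resulting left-hand side with the Hilbert--Samuel multiplicity $e(I(R/xR))$ via \autoref{TheoremMultInt} applied on the hypersurface section $T=\Spec(R/xR)$. First I would note that, since $I$ is $\fm_R$-primary, the invertible sheaf $\mathcal L_Y=I\mathcal O_Y$ is isomorphic to $\mathcal O_Y(D)$ for a Cartier divisor $D$ whose support is contained in $\pi^{-1}(\fm_R)$ (because $I\mathcal O_Y$ agrees with $\mathcal O_Y$ away from $\pi^{-1}(\fm_R)$). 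Thus $D$ is a Cartier divisor with support above $\fm_R$, and taking $F_1=\cdots=F_{d-1}=D$ in \autoref{Theorem2'} yields
$$
-(\mathcal L_Y^{\,d-1}\cdot T^*)_R=\sum_{E}v_E(x)\,(\mathcal L_Y^{\,d-1}\cdot E)_R,
$$
where $E$ ranges over the integral components of $\pi^{-1}(\fm_R)$. The claims that the $\mathcal O_{Y,E}$ are discrete valuation rings and that the $v_E$ are the Rees valuations of $I$ are carried over verbatim from \autoref{Theorem2'}, so it remains only to prove that $-(\mathcal L_Y^{\,d-1}\cdot T^*)_R=e(I(R/xR))$.

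For this I would apply \autoref{TheoremMultInt} to the local ring $R/xR$, which has dimension $d-1$ because $R$ is a domain and $x$ is a nonzerodivisor, together with the $\fm_{R/xR}$-primary ideal $I(R/xR)$. The natural candidate for the birational projective model is the strict transform $T^*$: the morphism $\pi|_{T^*}\colon T^*\to\Spec(R/xR)$ is projective (inherited from $\pi$) and birational (over each minimal prime $\mathfrak p\ni x$ of $R/xR$ one has $\mathfrak p\ne\fm_R$, hence $I R_{\mathfrak p}=R_{\mathfrak p}$ and $\pi$ is an isomorphism, so $T^*\to T$ is an isomorphism over the generic points of $T$), and $I\mathcal O_{T^*}=\mathcal L_Y|_{T^*}$ is invertible as the restriction of the invertible sheaf $\mathcal L_Y$ to the closed subscheme $T^*$. \autoref{TheoremMultInt} then gives
$$
e(I(R/xR))=-\big((I\mathcal O_{T^*})^{d-1}\big)_{R/xR}.
$$

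The final step is to match the two intersection numbers, namely to prove $\big((I\mathcal O_{T^*})^{d-1}\big)_{R/xR}=(\mathcal L_Y^{\,d-1}\cdot T^*)_R$. This is a compatibility between the intrinsic self-intersection of $\mathcal L_Y|_{T^*}$ on $T^*$ computed over $R/xR$ and the intersection of $\mathcal L_Y^{\,d-1}$ against the cycle $[T^*]$ on $Y$ computed over $R$; it should follow from the projection formula and from the compatibility of the first Chern class operators with restriction to closed subschemes, as developed for the product $(-)_R$ in \autoref{SecIntProd}. Combining the three displayed equalities then gives the asserted formula.

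I expect the main obstacle to be precisely this last compatibility, together with the verification that $T^*$ is an admissible model for \autoref{TheoremMultInt} over $R/xR$. Since $R/xR$ need not be a domain, $T^*$ may be reducible or non-reduced, so one must check that its cycle-theoretic structure is the correct one, i.e.\ that the ambient computation on $Y$ over $R$ faithfully reproduces the Hilbert--Samuel multiplicity of $I(R/xR)$ on the section. Once the definition of $(-)_R$ from \autoref{SecIntProd} is shown to commute with passing to the closed subscheme $T^*$ in this way, the corollary is immediate.
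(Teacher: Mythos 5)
Your proposal is correct and follows essentially the same route as the paper: take $F_1=\cdots=F_{d-1}=D$ with $\mathcal O_Y(D)\cong I\mathcal O_Y$ in \autoref{Theorem2}, apply \autoref{TheoremMultInt} to the birational projective morphism $T^*\to T=\Spec(R/xR)$ (where $I\mathcal O_{T^*}$ is invertible), and identify $-\big((I\mathcal O_{T^*})^{d-1}\big)_{R/xR}$ with $-(\mathcal L_Y^{d-1}\cdot T^*)_R$. The final compatibility you single out as the main obstacle is exactly the step the paper disposes of by appealing to the definitions \autoref{Proddef1} and \autoref{Proddef2} together with the projection formula, so your account matches the intended argument.
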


The coefficients $d_i(I)$ in  \autoref{ReesThm*'}
are uniquely determined by \cite[Theorem 9.42]{Re2}, so we have, by  \autoref{CorThm2'}, the geometric interpretation that the coefficients in \autoref{eq:Rees_Thm'} are equal to 
\begin{equation}\label{I6}
d_i(I)=((I\mathcal O_Y)^{d-1}\cdot E_i)_R,
\end{equation}
where $v_i=v_{E_i}$ for all $i$ are the Rees valuations of $I$.

 In \cite{Re},  and more precisely in  \cite[Theorem 9.41]{Re2}, Rees deduces other more general theorems whose proofs rely on  \autoref{ReesThm*'} as an essential ingredient; we state these theorems in    \autoref{genReesThm*} and \autoref{mostgenReesThm*} of this paper, and give short proofs in  \autoref{SecLR}.

We also obtain the following geometric interpretation of Rees' theorem \autoref{mostgenReesThm*}, which extends  \autoref{ReesThm*'} to more general rings. We note that if   $R$ is an excellent local ring, then the assumptions of the next theorem are satisfied.

\begin{headthm}[{\autoref{ThmExc}}]\label{ThmExc'} 
 Let  $(R,\fm_R,\kk)$ be a $d$-dimensional  local ring such that $R/P$ is analytically unramified  for every  prime ideal $P$ such that $\dim(R/P)=d$. Let $I$ be is an $\fm_R$-primary ideal such that $\oplus_{n\gs 0}\overline {I^n}$ is a finitely generated $R$-algebra.
Let $\pi:\Proj(\oplus_{n\gs 0}\overline{I^n})\rightarrow \Spec(R)$ be the natural projective morphism and set $\mathcal L_Y=I\mathcal O_Y$.
Then for every  $x\in R$ that is a nonzero divisor such that $\dim (R/xR)=d-1$ we have
$$
e(I(R/xR))=\sum_va_v(\mathcal L_Y^{d-1}\cdot C(Y,v))_Rv(x),
$$
where the sum is over the Rees valuations v of $I$. Here $C(Y,v)$ is the center of $v$ on $Y$, i.e.,  the integral subscheme  of $Y$ that is the closure of the unique point $q$ of $Y$ such that the local ring $\mathcal O_{Y,q}$ is dominated by the valuation ring  of $v$ and   $a_v=\ell_{R_{P_v}}(R_{P_v})$
with $P_v=\{x\in R\mid v(x)=\infty\}$.
\end{headthm}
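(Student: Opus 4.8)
The plan is to reduce the general (possibly non-domain, possibly reducible) statement to the analytically unramified domain case already established in Corollary C' (\autoref{CorThm2'}), by stratifying $\Spec(R)$ according to its top-dimensional minimal primes. First I would observe that since $x$ is a nonzero divisor with $\dim(R/xR)=d-1$, the relevant multiplicity $e(I(R/xR))$ can be computed using the associativity formula for Hilbert–Samuel multiplicities, which expresses it as a sum over the minimal primes $P$ of $R$ with $\dim(R/P)=d$ of terms $\ell_{R_P}(R_P)\cdot e\bigl(I(R/(P+xR))\bigr)$, weighted appropriately. The lengths $\ell_{R_P}(R_P)$ are exactly the multiplicities $a_v$ appearing in the statement once we match primes $P$ to Rees valuations; this is where the hypothesis that each $R/P$ is analytically unramified for top-dimensional $P$ becomes essential, since it lets me apply the domain case to each quotient $R/P$.

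Next, for each such minimal prime $P$, I would apply \autoref{CorThm2'} to the analytically unramified local domain $R/P$ with the $\fm_{R/P}$-primary ideal $I(R/P)$. This yields $e(I(R/(P+xR)))$ as a sum over the Rees valuations of $I(R/P)$ of intersection numbers against the exceptional components, weighted by the valuations of the image of $x$. The key bookkeeping step is to reconcile the Rees valuations of $I$ on $R$ itself with the union, over top-dimensional $P$, of the Rees valuations of $I(R/P)$: the finite generation of $\oplus_{n\gs 0}\overline{I^n}$ guarantees that $Y=\Proj(\oplus_{n\gs 0}\overline{I^n})$ is a legitimate projective scheme whose exceptional components dominate these valuation centers, and the center $C(Y,v)$ of each Rees valuation $v$ lives on the component of $Y$ coming from the unique $P=P_v$ that $v$ sees (namely $P_v=\{x\in R\mid v(x)=\infty\}$). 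I would then check that the intersection numbers $(\mathcal L_Y^{d-1}\cdot C(Y,v))_R$ computed on $Y$ agree with those computed on the integral closure Proj of $R/P_v$, using the compatibility of the intersection product $(-)_R$ with the closed immersion of the component and the projection formula developed in \autoref{SecIntProd}.

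The main obstacle I expect is precisely this last compatibility: matching the global intersection numbers on the reducible scheme $Y$ over $R$ with the intersection numbers on the irreducible schemes over the domains $R/P$, and confirming that no cross-terms or lower-dimensional contributions are lost or double-counted. Concretely, I must verify that $\mathcal L_Y^{d-1}$ restricted to the component $Y_P$ (the closure in $Y$ of $\Spec(R/P)$) computes the same number as $(I(R/P)\mathcal O_{Y_P})^{d-1}$, and that the center $C(Y,v)$ meets only the expected component. This requires knowing that the intersection product is supported on, and additive over, the top-dimensional components—an instance of the associativity/reduction-to-components principle for Snapper–Kleiman intersection numbers over the Artinian quotients, combined with the fact that the length weights $a_v=\ell_{R_{P_v}}(R_{P_v})$ absorb the multiplicities of the non-reduced structure along each component. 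Once this compatibility is in place, summing the per-prime identities and collecting terms by Rees valuation yields the stated formula.
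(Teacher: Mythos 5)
Your proposal follows essentially the same route as the paper's proof: reduce to the top-dimensional minimal primes via Rees' additivity lemma for degree functions (the paper's \autoref{ReesLemma}, which is where the nonzero-divisor hypothesis enters), apply the analytically unramified domain case (\autoref{CorThm2}) to each $R/P$, identify the Rees valuations of $I$ with the union of those of the $I(R/P)$ (\autoref{LemmaR1}), and match the intersection numbers on $Y$ with those on the strict transforms $Y_P$ via the projection formula. The compatibility you flag as the main obstacle is exactly the paper's equation relating $(\mathcal L_{Y_i}^{d-1}\cdot C(Y_i,v))_{R/P_i}$ to $(\mathcal L_Y^{d-1}\cdot C(Y,v))_R$, handled there by part (c) of \autoref{Prop2.3}, so your plan is sound.
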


In \autoref{SecMix} we turn to mixed multiplicities. Let $I_1,\ldots,I_r$ be $\fm_R$-primary ideals in $R$. For $n_1,\ldots,n_r\in \NN$, we have the following polynomial expansion
$$
e(I_1^{n_1}\cdots I_r^{n_r}) = \sum_{\stackrel{v_1,\ldots, v_r\in \NN,}{v_1+\cdots+v_r=d}}\binom{d}{v_1,\ldots, v_r}e(I_1^{[v_1]},\ldots, I_r^{[v_r]})n_1^{v_1}\cdots n_r^{v_r},
$$
see \autoref{prop:mixed_as_HS}. The coefficients $e(I_1^{[v_1]},\ldots, I_r^{[v_r]})$ are the {\it mixed multiplicities} of  $I_1,\ldots,I_r$ (Teissier and Risler \cite{T}, \cite[Definition 17.4.3]{HS}). In our next result, we prove that mixed multiplicities 
 can be interpreted as intersection products. This result is proven in \cite{Laz1} under the assumption that $R$ is essentially of finite type over a field. Related results also appear  in \cite{KT}.
 \begin{headthm}[{\autoref{thm:MixedMult}}]\label{thm:MixedMult'}
Let $(R,\fm_R,\kk)$ be a $d$-dimensional local ring  and   $I_1,\ldots, I_r$ be $\fm_R$-primary ideals of $R$. Let  $\pi:Y\rightarrow \Spec(R)$ be a proper birational morphism such that $I_i\mathcal O_Y$ is locally principal for every $i$. 
 Then for every $v_1,\ldots, v_r\in \NN$ such that $v_1+\cdots+v_r=d$ we have
$$ 
e(I_1^{[v_1]},\ldots, I_r^{[v_r]})=-\big((I_1\mathcal O_Y)^{v_1}\cdot\ldots\cdot(I_r\mathcal O_Y)^{v_r}\big)_R.
$$
 \end{headthm}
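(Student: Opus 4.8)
The plan is to reduce the statement about mixed multiplicities to the single-ideal multiplicity formula already established in Theorem A (\autoref{TheoremMultInt}), using the polynomial-expansion definition of mixed multiplicities together with the multilinearity of the intersection product.

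First I would observe that for any $n_1,\ldots,n_r \in \NN$, the product ideal $J = I_1^{n_1}\cdots I_r^{n_r}$ is again $\fm_R$-primary, and that $J\mathcal O_Y = (I_1\mathcal O_Y)^{n_1}\cdots(I_r\mathcal O_Y)^{n_r}$ is invertible on $Y$ (a product of locally principal sheaves). Applying Theorem A to the ideal $J$ on the morphism $\pi\colon Y \to \Spec(R)$ gives
\begin{equation*}
e(I_1^{n_1}\cdots I_r^{n_r}) = -\big((J\mathcal O_Y)^d\big)_R = -\Big(\big((I_1\mathcal O_Y)^{n_1}\cdots(I_r\mathcal O_Y)^{n_r}\big)^d\Big)_R.
\end{equation*}
Next I would expand both sides as polynomials in $n_1,\ldots,n_r$ and match coefficients. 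On the left, the coefficient of $n_1^{v_1}\cdots n_r^{v_r}$ (with $v_1+\cdots+v_r=d$) is by definition $\binom{d}{v_1,\ldots,v_r}\,e(I_1^{[v_1]},\ldots,I_r^{[v_r]})$, using the expansion in \autoref{prop:mixed_as_HS}. On the right, I would invoke the multilinearity and symmetry of the intersection product $(-)_R$ on Cartier divisor classes: writing $L_i = I_i\mathcal O_Y$, the $d$-fold self-intersection of the divisor $n_1 L_1 + \cdots + n_r L_r$ (in additive notation for the Picard group, so that the product ideal corresponds to the tensor product of invertible sheaves) expands multilinearly as
\begin{equation*}
\big((n_1 L_1 + \cdots + n_r L_r)^d\big)_R = \sum_{v_1+\cdots+v_r=d}\binom{d}{v_1,\ldots,v_r}\,(L_1^{v_1}\cdots L_r^{v_r})_R\; n_1^{v_1}\cdots n_r^{v_r}.
\end{equation*}
Comparing the coefficient of $n_1^{v_1}\cdots n_r^{v_r}$ on both sides, the multinomial factors cancel, yielding exactly $e(I_1^{[v_1]},\ldots,I_r^{[v_r]}) = -(L_1^{v_1}\cdots L_r^{v_r})_R$, which is the desired identity.

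The main obstacle is justifying that the intersection product $(-)_R$ is genuinely \emph{multilinear} and \emph{symmetric} in the Cartier divisor (or invertible sheaf) arguments, so that the binomial expansion above is valid over the arbitrary Noetherian local base $R$. This is where I would lean on the construction in \autoref{SecIntProd}: the product is defined by reducing, via rational equivalence in the sense of Thorup and the reduction to the Artinian/residue-field situation, to the Snapper–Mumford–Kleiman intersection theory, where multilinearity and commutativity of intersecting Cartier divisors with a fixed cycle are standard. Concretely, I would need the fact that for a fixed proper cycle, the operation of intersecting with a Cartier divisor is additive in the divisor class and that successive such operations commute — properties that should be recorded as basic formal properties of $(-)_R$ earlier in the paper. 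A secondary technical point is confirming that $I_i\mathcal O_Y$ being merely locally principal (rather than requiring a single global birational model adapted to all $I_i$ simultaneously) suffices; since products and powers of locally principal ideal sheaves remain locally principal, the Chern-class/divisor formalism applies uniformly, so no common log-resolution or further blow-up is needed beyond the given $Y$.
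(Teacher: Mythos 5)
Your proposal is correct and follows essentially the same route as the paper's proof: apply \autoref{TheoremMultInt} to the product ideal $I_1^{n_1}\cdots I_r^{n_r}$ (whose extension to $Y$ is invertible since each $I_i\mathcal O_Y$ is), expand the resulting self-intersection multilinearly using \autoref{CorR18}, and compare coefficients with the polynomial of \autoref{prop:mixed_as_HS}. The multilinearity and symmetry you flag as the main obstacle are exactly what \autoref{CorR18} supplies, so no gap remains.
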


As a consequence of \autoref{thm:MixedMult'}, we obtain in  \autoref{cor:Ident_Mixed_M}
the following identity for mixed multiplicities  in an arbitrary local ring. Under more assumptions on $R$, a related formula appeared in  \cite[Theorem 8.1]{KK}  
$$ 
e\big((IJ)^{[v_1]},I_2^{[v_2]}\ldots, I_r^{[v_r]}\big)=
\sum_{i=0}^{v_1}\binom{v_i}{i}
e\big(I^{[i]},J^{[v_1-i]},I_2^{[v_2]},\ldots, I_r^{[v_r]}\big).
$$

In the final subsection \autoref{SecSemi}, we give geometric proofs of some classical theorems in multiplicity theory for 2-dimensional local rings.  
Our proofs are  based on  the theorem of Mumford and Lipman which states that the intersection matrix of a resolution of singularities of a surface singularity is negative definite \cite{Mu2}, \cite{L}. 
In \autoref{ineq2}, we use this theorem to prove that 
if $R$ is a 2-dimensional excellent  local ring  and $I,J$ are $\fm_R$-primary ideals, then
$$
e(I,J)^2\ls e(I)e(J).
$$
This inequality was originally proven by Teissier in \cite{ELT} for 2-dimensional complex analytic local rings, also using the negative definiteness of the intersection matrix of a resolution of surface singularities. 
The formula was later proven for arbitrary 2-dimensional local rings in \cite{ReSh} and \cite[Lemma 17.7.1]{HS}, using different methods. In \autoref{thm:2dim_pos_def} we give a geometric proof of a theorem of Rees and Sharp on 2-dimensional Cohen-Macaulay rings \cite[Theorem 5.4]{ReSh}. Because of our geometric methods, we must restrict to 2-dimensional excellent local rings, but we do not assume the ring to be Cohen-Macaulay.

We now discuss in more detail  \autoref{SecIntProd} on intersection theory over local rings. In this section,  we give a  self-contained development of the intersection theory that we  use.

In \cite[\S 20]{Fl}, Fulton defines the relative dimension of a finite type scheme over a regular base, and  constructs a theory of rational equivalence for such schemes using this definition of relative dimension. He points out that much of the material from earlier parts of his book on the intersection theory of schemes of finite type over a field with the ordinary definition of dimension extends to this relative setting.  In  \cite[\S 2]{Kl3}, Kleiman gives a modified definition of relative dimension, which allows him to construct rational equivalence over a Noetherian base that is equidimensional and universally catenary. Finally, in \cite{Thor}, Thorup gives a definition of  relative dimension that allows him to construct rational equivalence for finite type schemes  over an arbitrary Noetherian base.

In  \autoref{SecCycle}, we  introduce the notation about cycles on a Noetherian scheme. We survey the theory of rational equivalence developed  in \cite{Thor} over $S=\Spec(R)$, where $R$ is a local ring, and explain how some material from the first two chapters of \cite{Fl} extends to this relative situation.  We also discuss the degree, $\int(-)$, defined in \cite[1.4]{Fl} for schemes that are proper over a field. 
This operation is only defined for a cycle of dimension 0 over a field as base scheme, but we can reduce to this case if we are working with a cycle  over $S$ that is  supported over $\fm_R$. In this case,  the cycle can be viewed as being a cycle over the field $\kk=R/\fm_R$. Thus  if the cycle has dimension 0 as a cycle over $\kk$, the degree $\int(-)$ can be computed over  $\kk$. We define in  \autoref{SecLRInt}  the intersection product $(-)_R$ discussed above and  prove a few propositions that are needed in the rest of the article. 
In  \autoref{SecIPPS}, we begin by summarizing  the intersection product of Snapper \cite{Sn}, Mumford \cite{Mum}, and Kleiman  \cite{Kl}, \cite{Kl2} for schemes that are proper over an Artinian base.  We then give a self-contained proof in \autoref{equiv} that the intersection product $(-)_R$ defined in  \autoref{SecLRInt} can be computed using the Snapper-Mumford-Kleiman intersection theory. This reduction is essential for our applications of intersection theory to the study of multiplicities and degree functions.

Throughout this  paper, all local rings $(R,\fm_R,\kk)$ are assumed to be Noetherian. If $R$ is a  ring, $\ell_R(M)$ denotes the length of an $R$-module $M$.

\section{Intersection theory over local rings}\label{SecIntProd}

 In this section we define   an intersection product $(-)_R$ on schemes which are proper and birational over  a local ring $R$. This  theory  generalizes others in  the literature, see \cite{L} and \cite[\S 2.3.2]{BLQ}. We use the  theory of rational equivalence of Thorup  \cite{Thor} for finite type schemes over a noetherian base. Intersection theory  derived from \cite{Thor} is used in \cite{KT}.

 If $X$ is a Noetherian scheme and $V\subset X$ is an integral subscheme, then $R(V)$  denotes  the {\it function field of $V$}, i.e., $R(V)=\mathcal O_{X,V}/\fm_V$ where $\mathcal O_{X,V}$ is the local ring of the generic point of $V$ and $\fm_V$ is its maximal ideal.

\subsection{Cycles}\label{SecCycle}
In this section let $X$  be a Noetherian scheme.  We introduce notation which is compatible with notation in  the first chapter of \cite{Fl}, but in the generality of Noetherian schemes, without the restriction of being an algebraic scheme over a field, or even assuming that the schemes are universally catenary. We use this to have a uniform notation when combining results from different sources.

Define $Z(X)$ to be the free abelian group generated by $[V]$, where $V$ is an integral closed subscheme of $X$. 
If $\mathcal F$ is a coherent sheaf on $X$, define  the fundamental cycle of $\mathcal F$ as
$$
[\mathcal F]=\sum a_i[V_i],
$$
where $V_i$ are the integral components of the support of $\mathcal F$ and $a_i=\ell_{\mathcal O_{X,V_i}}(\mathcal F_{V_i})$.  For $Z$ a closed subscheme of $X$, define $[Z]=[\mathcal O_Z]$. 

Suppose that $X$ is an integral Noetherian scheme and $V$ is a closed integral subscheme of $X$ with $\dim(\mathcal O_{X,V})=1$. Let $r\in R(V)$.  Define the cycle $\ord_V(r)$ as   in \cite[\S 1.2]{Fl}, as we now summarize. Let $A=\mathcal O_{X,V}$, which is a one-dimensional  local ring. Write $r=\frac{a}{b}$ with $a,b\in A$. Then 
$$
\ord_V(r)=\ell_A(A/aA)-\ell_A(A/bA).
$$

The fundamental cycle $[\divv(r)]\in Z(X)$ is defined as 
$$
[\divv(r)]=\sum\ord_{V}(r)[V],
$$
where the sum is over the closed integral subschemes $V$ of $X$ which satisfy $\dim(\mathcal O_{X,V})=1$. Note that if $X$ is not  universally catenary, then it is possible that $\dim(V)<\dim(X)-1$ for some of the $V$ for which $\ord_V(r)\neq 0$.

Let $X$ be a scheme and $\mathcal K_X$ be the sheaf of $\mathcal O_X$-modules such that $\Gamma(U,\mathcal K_X)$ is the total quotient ring of $\Gamma(U,\mathcal O_X)$ for all affine open subsets $U$ of $X$. 
Let $\mathcal K_X^*\subset \mathcal K_X$ be the subsheaf of units of $\mathcal K_X$, i.e., $\Gamma(U,\mathcal K_X^*)$ is the multiplicative group of invertible elements of $\Gamma(U,\mathcal K_X)$. A Cartier divisor $D$ on $X$ is determined by the data $(U_i,f_i)$, where  $\{U_i\}$ is an open cover of $X$ and $f_i\in \Gamma(U_i,\mathcal K_X^*)$ are such that $\frac{f_i}{f_j}$ is a unit in $\Gamma(U_i\cap U_j,\mathcal O_X)$. The invertible sheaf $\mathcal O_X(D)$ associated to $D$ is defined by $\mathcal O_X(D)|U_i=\frac{1}{f_i}\mathcal O_X|U_i$. We can add Cartier divisor $D$ and $E$ so that $\mathcal O_X(D+E)\cong\mathcal O_X(D)\otimes\mathcal O_X(E)$ and $\mathcal O_X(-D)\cong \mathcal O_X(D)^{-1}$.  A Cartier divisor is defined to be {\it effective} if $\mathcal O_X(-D)$ is a sheaf of ideals.

Let $D$ be a Cartier divisor on a Noetherian integral scheme $X$ with $(U_i,f_i)$ as above. One defines $\ord_V(D)=\ord_V(f_i)$, where $U_i$ is an open set that contains the generic point of $V$ (as  in \cite[\S 2.1]{Fl}). Furthermore, the  {\it associated Weil divisor} of $D$ is defined as

\begin{equation}\label{Weil}
[D]=\sum \ord_V(D)[V]\in Z(X),
\end{equation}
where the sum is over the closed integral subschemes $V$ of $X$ which  satisfy $\dim(\mathcal O_{X,V})=1$. 

Let $f:X\rightarrow Y$ be a proper morphism of Noetherian schemes and $V$ be a closed integral subscheme of $X$. Then the image $W=f(V)$ is a closed integral subscheme of $Y$.
Define
$$
\deg(V/W)=
\begin{cases}
[R(V):R(W)]&\mbox{ if $[R(V):R(W)]$ is finite}\\
0&\mbox{ otherwise.}
\end{cases}
$$
Define $f_*[V]=\deg(V/W)[W]$, which extends linearly to a homomorphism $f_*:Z(X)\rightarrow Z(Y)$.

If $\alpha\in Z(X)$, we   write $|\alpha|$ for the support of $\alpha$. We also  write $|D|$ for the support of a Cartier divisor $D$, which is the union of the $V$ with nonzero $\ord_V(D)$ appearing in the expansion \autoref{Weil}  (as in \cite{Fl}). We can regard the support of a cycle or Cartier divisor on $X$ as a closed subscheme of $X$ by giving it the induced reduced scheme structure.

Let $X$ and $Y$ be schemes with finitely many irreducible components. A morphism $\alpha:X\rightarrow Y$ is {\it birational} if $\alpha$  induces a bijection between the sets of irreducible components of $X$  and $Y$,  and for every generic point $\eta\in X$ of an irreducible component of $X$, the induced local homomorphism $\mathcal O_{Y,\alpha(\eta)}\rightarrow \mathcal O_{X,\eta}$ is an isomorphism. If $\alpha:X\rightarrow Y$ is a birational morphism and $[\mathcal O_X]=\sum m_i[V_i]$, then $[\mathcal O_Y]=\sum m_i[\alpha(V_i)]$.

\begin{lemma}\label{Lembirat} Let $A$ be a Noetherian ring and $K\subset A$ be an ideal. Let 
$$
\pi:Y=\Proj(\oplus_{n\gs 0}K^n)\rightarrow \Spec(A)
$$
be the blowup of $K$. Then
\begin{enumerate}
\item[\rm (1)] $\pi$ is a projective morphism. 
\item[\rm (2)] $\pi$ is birational if and only if $K$ is not contained in any minimal prime of $A$.
\end{enumerate}
\end{lemma}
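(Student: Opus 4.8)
The plan is to handle the two parts separately, treating (1) as essentially formal and concentrating effort on (2). For \textbf{part (1)}, I would fix generators $K=(a_1,\dots,a_m)$, observe that the Rees algebra $\mathcal{R}=\bigoplus_{n\geq 0}K^n$ is generated over $A=\mathcal{R}_0$ in degree $1$, and use the graded surjection $A[x_1,\dots,x_m]\twoheadrightarrow\mathcal{R}$, $x_i\mapsto a_i\in K=\mathcal{R}_1$, to get a closed immersion $Y\hookrightarrow\mathbb{P}^{m-1}_A$. Since $\mathbb{P}^{m-1}_A\to\Spec(A)$ is projective and a composite of a closed immersion with a projective morphism is projective, $\pi$ is projective; the only input is that $\Proj$ of a graded quotient algebra sits as a closed subscheme of the ambient projective space.

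For \textbf{part (2)} I would first record two standard facts about blowups. The first is that $\pi$ restricts to an isomorphism over $U:=\Spec(A)\setminus V(K)$: for $a\in K$, on $D(a)$ the element $a$ is a unit of $A_a$ lying in $K$, so $KA_a=A_a$, and blowing up the unit ideal gives $\Proj(A_a[t])=\Spec(A_a)$, the identity; as $\{D(a)\}_{a\in K}$ covers $U$, this yields $\pi^{-1}(U)\xrightarrow{\ \sim\ }U$. The second is that $K\mathcal{O}_Y$ is an invertible sheaf of ideals, with $V(K\mathcal{O}_Y)=\pi^{-1}(V(K))$.

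For the direction $(\Leftarrow)$, assume $K\not\subseteq P$ for every $P\in\Min(A)$, so every generic point $P$ of $\Spec(A)$ lies in $U$. The isomorphism over $U$ then gives, for each such $P$, a unique point $\eta_P\in Y$ above it with $A_P\cong\mathcal{O}_{Y,\eta_P}$; since $P$ is minimal, $A_P$ is $0$-dimensional, so $\dim\mathcal{O}_{Y,\eta_P}=0$ and $\eta_P$ is a generic point of $Y$. To conclude I must show there are no other generic points, i.e.\ that no component of $Y$ lies in the exceptional locus $V(K\mathcal{O}_Y)$. This is where invertibility is used: on an affine chart $\Spec(B)$ of $Y$ the ideal $K\mathcal{O}_Y$ is generated by a single nonzerodivisor, and in a Noetherian ring a nonzerodivisor avoids every minimal prime (because $\Min(B)\subseteq\Ass(B)$ consists of zerodivisors). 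Hence $V(K\mathcal{O}_Y)$ contains no generic point of $Y$, every generic point of $Y$ maps into $U$, and is therefore one of the $\eta_P$. This produces the bijection $\eta_P\leftrightarrow P$ on components together with the required local isomorphisms, so $\pi$ is birational. For the contrapositive $(\Rightarrow)$, suppose $K\subseteq P$ for some $P\in\Min(A)$. I would compute the fiber as $\Proj(\mathcal{R}\otimes_A\kappa(P))$; since $A_P$ is Artinian local and $KA_P$ is nilpotent, $K^NA_P=0$ for $N\gg 0$, so $\mathcal{R}\otimes_A\kappa(P)$ has nilpotent irrelevant ideal and empty $\Proj$. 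Thus $P\notin\pi(Y)$. But the local-isomorphism condition forces a birational $\pi$ to send generic points of $Y$ onto generic points of $\Spec(A)$ (the image of a generic point has a $0$-dimensional local ring, hence is generic), so every generic point of $\Spec(A)$ would lie in $\pi(Y)$; the missing point $P$ contradicts birationality.

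The routine steps are projectivity and the isomorphism over $U$. I expect the \emph{main obstacle} to be the $(\Leftarrow)$ direction, namely excluding spurious components of $Y$ supported on the exceptional fiber; the clean route is the invertibility of $K\mathcal{O}_Y$ combined with the fact that nonzerodivisors avoid minimal primes. A direct analysis of the minimal primes of $\mathcal{R}$ is possible but more delicate, since contractions to $\mathcal{R}$ of the minimal primes $P[t]$ of $A[t]$ are not obviously minimal, so I would avoid that in favor of the geometric argument.
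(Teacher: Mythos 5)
Your proof is correct and follows essentially the same route as the paper's: the closed immersion into $\mathbb{P}^{m-1}_A$ for projectivity, the empty fiber $\Proj(\oplus_{n\geq 0} K_P^n)=\emptyset$ over a minimal prime $P\supseteq K$ for one direction, and the isomorphism of $\pi^{-1}(U)\to U$ over $U=\Spec(A)\setminus V(K)$ for the other. The only difference is that you explicitly verify that no irreducible component of $Y$ lies in the exceptional locus (via invertibility of $K\mathcal{O}_Y$ and the fact that nonzerodivisors avoid minimal primes), a step the paper's proof leaves implicit when it passes from ``$U$ contains all minimal primes of $A$'' to the conclusion that $\pi$ is birational.
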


\begin{proof} Let $f_1,\ldots,f_n$ be generators of  $K$.   We have a natural graded surjection of $A$-algebras $A[t_1,\ldots,t_n]\twoheadrightarrow \oplus_{n\gs 0}K^n$, where the $t_i$ are variables. This gives  a closed immersion $Y\subset \PP_A^{n-1}$, showing $\pi$ is projective. 
For the proof of part (2), suppose $K\subset P$ where $P$ is a minimal prime of $A$. Then we have an induced morphism 
$$
Y\times_{\Spec(A)}\Spec(A_P)\cong\Proj(\oplus_{n\gs 0}K_P^n)\rightarrow \Spec(A_P).
$$
Since every element of $K_P$ is nilpotent, we have
$$
\Proj(\oplus_{n\gs 0}K_P^n)=\{\mbox{homogeneous primes $Q$ in $\oplus_{n\gs 0}K_P^n$ such that $\oplus_{n>0}K_P^n\not\subset Q$}\}=\emptyset.
$$
In particular, there does not exist a point in $Y$ that  maps to $P$ and so $\pi$ is not birational. Conversely, assume that $K$ is not contained in any minimal prime  of $A$. Let $U=\Spec(A)\setminus V(K)$, an open subset of $\Spec(A)$. The induced morphism $\pi:\pi^{-1}(U)\rightarrow U$ is an isomorphism by  \cite[Proposition II.7.13]{H}. By assumption, $U$ contains all minimal primes of $A$ so $\pi$ is birational.
\end{proof}

\subsection{Intersection Theory over $\Spec(R)$}\label{SecRelInt}

 In this section, we recall some results about the construction of rational equivalence over Noetherian schemes from \cite{Thor}. The article \cite{Thor} gives a generalization of the development of rational equivalence for algebraic schemes over a field  in \cite[Ch. 1]{Fl}.

Throughout  this section, let $(R,\fm_R,\kk)$ be a $d$-dimensional  local ring  and let 
 $S=\Spec(R)$.
Let $\pi:X\rightarrow\Spec(R)$ be a  finite type morphism.
For an  integral subscheme $V$ of $X$ with image $T=\pi(V)$ in $S$ define the {\it relative dimension}
$$
\dim_SV=\trdeg(R(V)/R(T))-\dim(\mathcal O_{S,T}).
$$

The following lemma relates the relative dimension of $V$ with that of a   subvariety.
\begin{lemma}\label{lem:rel_dim}
If $W\subset V$ are integral subschemes of $X$, then 
\begin{equation}\label{eqReldim}
\dim _SW\ls\dim_SV-\dim (\mathcal O_{V,W}).
\end{equation}
In particular, if $W\ne V$ then $\dim_SW<\dim_SV$.
\end{lemma}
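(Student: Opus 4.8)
The plan is to deduce \eqref{eqReldim} from the classical dimension formula for finitely generated algebras over a Noetherian domain. Write $x_V$ and $x_W$ for the generic points of $V$ and $W$. Since $W\subset V$, the point $x_W$ is a specialization of $x_V$ lying in $V$, so $\mathcal O_{V,W}=\mathcal O_{V,x_W}$ is a local domain with fraction field $R(V)$ and residue field $R(W)$, of dimension $\dim(\mathcal O_{V,W})$. Let $T_V=\overline{\pi(V)}$ and $T_W=\overline{\pi(W)}$ be the integral images in $S$; since $W\subset V$ we have $T_W\subset T_V$, corresponding to primes $\mathfrak p_V\subset\mathfrak p_W$ of $R$. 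Then $\mathcal O_{S,T_V}=R_{\mathfrak p_V}$ and $\mathcal O_{S,T_W}=R_{\mathfrak p_W}$, so $\dim(\mathcal O_{S,T_V})=\height(\mathfrak p_V)$ and $\dim(\mathcal O_{S,T_W})=\height(\mathfrak p_W)$. Unwinding the definition of relative dimension, the desired inequality is equivalent to
\[
\dim(\mathcal O_{V,W})+\trdeg(R(W)/R(T_W))\ls \trdeg(R(V)/R(T_V))+\big(\height(\mathfrak p_W)-\height(\mathfrak p_V)\big).
\]

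First I would choose an affine open $\Spec B\subset V$ containing $x_W$. Because $V\to T_V$ is dominant, the structure map induces an injection $R/\mathfrak p_V\hookrightarrow B$ of domains with $B$ of finite type over $R/\mathfrak p_V$. Let $\mathfrak q\subset B$ be the prime corresponding to $x_W$; its contraction to $R/\mathfrak p_V$ is $\mathfrak p_W/\mathfrak p_V$. The dimension formula applied to $R/\mathfrak p_V\hookrightarrow B$ at $\mathfrak q$ yields
\[
\height(\mathfrak q)+\trdeg_{\kappa(\mathfrak p_W/\mathfrak p_V)}\kappa(\mathfrak q)\ls \height(\mathfrak p_W/\mathfrak p_V)+\trdeg_{R/\mathfrak p_V}B.
\]
Now I would identify every term: $\height(\mathfrak q)=\dim(\mathcal O_{V,W})$; since $\kappa(\mathfrak q)=R(W)$ and $\kappa(\mathfrak p_W/\mathfrak p_V)=R(T_W)$, the left transcendence degree is $\trdeg(R(W)/R(T_W))$; and since the fraction field of $B$ is $R(V)$ while that of $R/\mathfrak p_V$ is $R(T_V)$, the right transcendence degree equals $\trdeg(R(V)/R(T_V))$. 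This gives
\[
\dim(\mathcal O_{V,W})+\trdeg(R(W)/R(T_W))\ls \height(\mathfrak p_W/\mathfrak p_V)+\trdeg(R(V)/R(T_V)).
\]

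To finish, I would combine this with the elementary height inequality $\height(\mathfrak p_W/\mathfrak p_V)\ls\height(\mathfrak p_W)-\height(\mathfrak p_V)$, valid for any pair of nested primes in a Noetherian ring (concatenate a saturated chain below $\mathfrak p_V$ with the lift of a chain from $\mathfrak p_V$ to $\mathfrak p_W$). Substituting yields exactly the reformulated inequality, hence \eqref{eqReldim}. For the final assertion, if $W\ne V$ then $x_W\ne x_V$, so $\mathfrak q$ is a nonzero prime of the domain $B$ and $\dim(\mathcal O_{V,W})=\height(\mathfrak q)\gs 1$, giving $\dim_SW<\dim_SV$.

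The point I would emphasize is that no catenarity hypothesis on $R$ (or $X$) is needed: both the dimension formula and the height inequality are only inequalities in general, becoming equalities precisely under universal catenarity, yet here both point in the same direction and reinforce the claim. This robustness is exactly what makes Thorup's relative dimension the right notion in the absence of catenarity. The only genuinely substantive input is the dimension formula; the remaining work is the bookkeeping that matches residue fields, fraction fields, and heights on the two sides, together with the reduction to an affine chart so that the formula applies to an honest finitely generated domain extension.
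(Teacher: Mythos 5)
Your proof is correct and follows essentially the same route as the paper: the paper cites the inequality of \cite[IV.5.6.5.1]{EGAIV} relating $V$, $W$ and their images $T$, $U$, and then combines it with the height inequality $\dim(\mathcal O_{S,U})\gs\dim(\mathcal O_{S,T})+\dim(\mathcal O_{T,U})$, exactly as you do. The only difference is that you re-derive the EGA inequality in the relevant case from the affine dimension inequality for finitely generated extensions of Noetherian domains, which is a valid and self-contained substitute for the citation.
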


\begin{proof} Let $U=\pi(W)$ and $T=\pi(V)$. By formula \cite[IV.5.6.5.1]{EGAIV},
$$
\trdeg(R(V)/R(T))+\dim(\mathcal O_{T,U})\gs \trdeg(R(W)/R(U))
+\dim(\mathcal O_{V,W}),
$$
so $\dim_SV-\dim(\mathcal O_{V,W})\gs \dim_SW$,
since $\dim(\mathcal O_{S,U})\gs \dim(\mathcal O_{S,T})+\dim( \mathcal O_{T,U})$.
\end{proof}

Relative dimensions attain only a bounded range of nonnegative values, as the following proposition shows.

\begin{proposition}\label{prop:bound_rel_dim}
Let    $\pi:X\rightarrow S$ be proper and  birational and $V$ be an integral closed subscheme of $X$. Then $-d\ls \dim_SV\ls 0$. Moreover, $\dim_SV=0$ if and only if $V$ is an integral component of $X$, and $\dim_SV=-d$ if and only if $V$ is a closed point of $X$, and so  is contained in $\pi^{-1}(\m_R)$.
\end{proposition}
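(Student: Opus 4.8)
The plan is to treat the two bounds separately, extracting the characterizations of the extreme values from \autoref{lem:rel_dim} and from the defining formula for $\dim_S$. The one preliminary I would record first is that every integral component $V_0$ of $X$ satisfies $\dim_S V_0 = 0$. Indeed, since $\pi$ is birational it carries the generic point $\eta$ of $V_0$ to the generic point of an irreducible component of $S$, i.e. to a minimal prime $P$ of $R$, and the induced map $\mathcal{O}_{S,P}\to\mathcal{O}_{X,\eta}$ is an isomorphism. Writing $T=\pi(V_0)$, this gives $\dim(\mathcal{O}_{S,T})=\height P=0$ together with an isomorphism $R(T)\cong R(V_0)$ on residue fields, whence $\trdeg(R(V_0)/R(T))=0$ and $\dim_S V_0=0$.

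For the upper bound, an arbitrary integral closed subscheme $V$ has its generic point lying in some component of $X$, so $V\subseteq V_0$ for some integral component $V_0$. Applying \autoref{lem:rel_dim} with the roles $W=V$, $V=V_0$ gives $\dim_S V\ls \dim_S V_0-\dim(\mathcal{O}_{V_0,V})=-\dim(\mathcal{O}_{V_0,V})\ls 0$. Moreover, if $\dim_S V=0$, the strict inequality in the ``in particular'' clause of \autoref{lem:rel_dim} forces $V=V_0$; hence $\dim_S V=0$ holds exactly when $V$ is an integral component, which settles both the upper bound and the first extremal case.

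For the lower bound I would argue directly from the definition. Writing $T=\pi(V)$ and letting $P_T$ be the corresponding prime of $R$, one has $\dim(\mathcal{O}_{S,T})=\height P_T\ls\dim R=d$, while $\trdeg(R(V)/R(T))\gs 0$; therefore $\dim_S V\gs -d$. Equality forces both $\height P_T=d$ and $\trdeg(R(V)/R(T))=0$. Since $R$ is local, the only prime of height $d=\dim R$ is $\fm_R$, so $T=\{\fm_R\}$ and thus $V\subseteq\pi^{-1}(\fm_R)$. As $\pi$ is proper, this fiber is a proper (in particular finite type) scheme over $\kk=R/\fm_R$, and $V$ is an integral closed subscheme of it with $\dim V=\trdeg(R(V)/\kk)=0$; a zero-dimensional integral scheme of finite type over a field is a single closed point, so $V$ is a closed point of $X$. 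Conversely, if $V$ is a closed point then, $\pi$ being proper and hence closed, $\pi(V)$ is the unique closed point $\fm_R$ of $S$, giving $\dim(\mathcal{O}_{S,T})=d$, while the residue field $R(V)$ is finite over $\kk$, so $\trdeg(R(V)/R(T))=0$ and $\dim_S V=-d$.

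The step I expect to require the most care is the characterization of the value $-d$: this is where properness genuinely enters, through the identification of $\pi^{-1}(\fm_R)$ as a proper $\kk$-scheme on which the equality $\dim=\trdeg$ for integral schemes of finite type over a field is available, and through the local-ring fact that the only height-$d$ prime of $R$ is $\fm_R$. By contrast, the lower bound itself and the characterization of $\dim_S V=0$ are formal consequences of the definition and of \autoref{lem:rel_dim}, with birationality used only to pin the relative dimension of the components at $0$.
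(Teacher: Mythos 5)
Your proof is correct, and for the upper bound and the characterization of $\dim_SV=0$ it coincides with the paper's argument: both reduce to the fact that integral components have relative dimension $0$ (via birationality) and then invoke \autoref{lem:rel_dim}. Where you diverge is the lower bound. The paper uses properness to produce a closed point $p\in V\cap\pi^{-1}(\fm_R)$ and then applies \autoref{lem:rel_dim} once more to get $-d=\dim_Sp\ls\dim_SV$, with the equality case forcing $\dim(\mathcal O_{V,p})=0$, i.e. $V=\{p\}$. You instead read the bound $\dim_SV\gs -d$ directly off the defining formula, from $\trdeg(R(V)/R(T))\gs 0$ and $\height P_T\ls d$, and only then bring in properness to analyze the equality case. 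Your route has the minor advantage of showing that the inequality $-d\ls\dim_SV$ holds for any finite type $S$-scheme, with properness needed only for the characterization of when $-d$ is attained (and, symmetrically, birationality only for the characterization of when $0$ is attained); the paper's route is slightly shorter because it reuses \autoref{lem:rel_dim} for both extremes. Your explicit verification that a closed point has relative dimension exactly $-d$ (properness forces its image to be $\fm_R$, and the Nullstellensatz gives $\trdeg(R(p)/\kk)=0$) is a detail the paper leaves implicit, so it is a welcome addition rather than a redundancy.
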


\begin{proof} If $V$ is an integral component of $X$ then $\dim_SV=0$ since $\pi$ is birational, by the definition of relative dimension. If $W$ is a closed integral subscheme of $X$, then $W\subset V$ for some integral component $V$ of $X$.  Thus $\dim_SW\ls 0$ by \autoref{eqReldim}, and $\dim_SW=0$ if and only if $W$ is an integral component of $X$. 
We have that $W\cap \pi^{-1}(\m_R)\ne\emptyset$ since $\pi$ is proper, so there exists a closed point $p\in W$ and so $-d=\dim_Sp\ls\dim_SW$ by \autoref{lem:rel_dim}.
\end{proof}

For $V$ an integral subscheme of $X$, define the {\it relative codimension}
$$
\codim_S(V,X)=\dim_SX-\dim_SV.
$$
For a cycle $\alpha$ on $X$, let $\alpha_i$ denote the homogeneous component of relative dimension $\dim_S$ equal to $i$ and $\alpha^i$ denote the homogeneous component of $\codim_S(-,X)$ equal to $i$. Let $Z_i(X/S)$ be the cycles which are homogeneous of $\dim_S$ equal to $i$ and $Z^i(X/S)$ be the cycles which are homogeneous of $\codim_S(-,X)$ equal to $i$, giving two gradings of $Z(X)$, $Z_*(X/S)=\oplus Z_i(X/S)$ and $Z^*(X/S)=\oplus Z^i(X/S)$. These two gradings are related by $Z^i(X/S)=Z_{\dim_SX-i}(X/S)$.

Let $W$ be a closed  integral subscheme of $X$. A cycle $\alpha$ on $W$ is called {\it principal} if there exists $0\ne r\in R(W)$ such that 
$$
\alpha = [\divv(r)]^1=\sum \ord_V(r)[V],
$$
where the sum is over the integral subschemes $V$ of $W$ such that $\codim_S(V,W)=1$.

Define $P^1(W/S)$ to be the subgroup of $Z^1(W/S)$ which is generated by the principal cycles on $W$. We have that
$Z^1(W/S)=Z_{\dim_SW-1}(W/S)\subset Z_{\dim_SW-1}(X/S)$, so that $P^1(W/S)$ can be considered as a subgroup of $Z_{\dim_SW-1}(X/S)$.
In \cite[Definition 6.3]{Thor},
 $P_*(X/S)$ is defined to be the subgroup generated by principal cycles on the integral subschemes of $X$, giving $P_*(X/S)$ a natural grading $P_*(X/S)\cong \oplus P_i(X/S)$
 by $\dim_S$. Cycles $\alpha$ and $\alpha'$ on $X$ are said to be {\it rationally equivalent} if $\alpha-\alpha'\in P_*(X/S)$. The quotient $A_*(X/S)=Z_*(X/S)/P_*(X/S)$ is the group of cycles modulo rational equivalence. Moreover, $A_*(X/S)\cong\oplus A_i(X/S)$ is graded by $\dim_S$ so that  $A_i(X/S)=Z_i(X/S)/P_i(X/S)$.

 We state the following proposition from \cite{Thor}, which gives a generalization of \cite[Theorem 1.4]{Fl}. 
 
 \begin{proposition}[{\cite[Proposition 6.5]{Thor}}]\label{PropRE}
Let $Y$ and $X$ be finite type $S$-schemes  and $f:Y\rightarrow X$ be a proper $S$-morphism.  Then the homomorphism $f_*:Z(Y)\rightarrow Z(X)$ defined in  \autoref{SecCycle} induces a functorial homomorphism
 $f_*:A_k(X/S)\rightarrow A_k(Y/S)$ for all  $k$.
 \end{proposition}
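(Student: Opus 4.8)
The plan is to establish three properties of the pushforward $f_*\colon Z(Y)\to Z(X)$ defined in \autoref{SecCycle}: that it respects the grading by relative dimension, that it carries the subgroup $P_*(Y/S)$ of principal cycles into $P_*(X/S)$, and that it is functorial with respect to composition. The first and third are comparatively routine, while the second---the analogue over $S$ of \cite[Theorem 1.4]{Fl}---is where the real work lies. Throughout I would follow Fulton's argument closely, systematically replacing the ordinary dimension by the relative dimension $\dim_S$ of \autoref{SecRelInt}; this substitution is exactly what Thorup's framework is designed to accommodate, and it yields the induced covariant map $f_*\colon A_k(Y/S)\to A_k(X/S)$.

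For the grading it suffices, by linearity, to treat $f_*[V]=\deg(V/W)[W]$ for $V\subset Y$ integral with image $W=f(V)\subset X$. Since $f$ is an $S$-morphism, $V$ and $W$ have the same image $T$ in $S$, so by the tower formula for transcendence degree one gets $\dim_SV-\dim_SW=\trdeg(R(V)/R(W))$. When $[R(V):R(W)]$ is finite this difference vanishes and $f_*[V]\in Z_{\dim_SV}(X/S)$; when it is infinite, $\deg(V/W)=0$ and there is nothing to check. Hence $f_*(Z_k(Y/S))\subseteq Z_k(X/S)$ for all $k$.

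The heart of the proof is the inclusion $f_*(P_*(Y/S))\subseteq P_*(X/S)$. By the definition of $P_*$ as generated by principal cycles on integral subschemes, and by restricting $f$ to such a subscheme, I reduce to the case where $Y$ and $X$ are integral, $f$ is proper and dominant, and the cycle is $[\divv(r)]$ for some $0\ne r\in R(Y)$. Here I would split into two cases according to whether $\dim_SX=\dim_SY$ or $\dim_SX<\dim_SY$. In the equidimensional case $R(Y)/R(X)$ is a finite field extension, and the goal is the norm formula $f_*[\divv(r)]=[\divv(N_{R(Y)/R(X)}(r))]$, which is proved by localizing at each $V\subset X$ with $\codim_S(V,X)=1$ and reducing to a determinant/length computation relating $\ord_V$ over a one-dimensional local ring $\mathcal O_{X,V}$ to its finite extension inside $Y$. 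In the case $\dim_SX<\dim_SY$, the claim is $f_*[\divv(r)]=0$; after discarding those components of $[\divv(r)]$ whose images drop relative dimension (these push forward to zero for degree reasons), one reduces to $\dim_SY=\dim_SX+1$, where the generic fibre of $f$ is a proper curve over the function field $R(X)$, and invokes the vanishing of the degree of a principal divisor on a complete curve over a field.

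The main obstacle is this last step: making the passage to the generic curve fibre and the norm computation rigorous over a Noetherian base that need not be universally catenary, so that all dimension counts must be carried out with $\dim_S$ rather than $\dim$, and the auxiliary one-dimensional local rings arising from the $\mathcal O_{X,V}$ are handled entirely through the length-theoretic definition of $\ord_V$ recalled in \autoref{SecCycle}. Once this inclusion is established, $f_*$ descends to $f_*\colon A_k(Y/S)\to A_k(X/S)$, and the functoriality $(g\circ f)_*=g_*\circ f_*$ follows from the corresponding identity on the cycle groups $Z(-)$ together with the multiplicativity of the degrees $\deg(V/W)$ under a tower of field extensions, completing the proof.
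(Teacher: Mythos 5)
The paper gives no proof of \autoref{PropRE} at all: it is quoted verbatim as \cite[Proposition 6.5]{Thor}, with only a follow-up remark (the observation that $f_*[V]\ne 0$ forces $[R(V):R(W)]$ finite and hence $\dim_SW=\dim_SV$) standing in for your first step. So there is nothing in the paper to compare your argument against; what you have written is, in effect, a reconstruction of Thorup's proof, and as an outline it is the right one. Your three-part division (grading, preservation of principal cycles, functoriality) and the two-case analysis of the key step --- the norm formula when $\trdeg(R(V)/R(W))=0$ and vanishing via the generic curve fibre when $\dim_SW<\dim_SV$ --- is exactly the Fulton \cite[Theorem 1.4]{Fl} strategy that Thorup transports to a general Noetherian base. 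Two points deserve emphasis beyond what you say. First, the statement in the paper contains a typo (the induced map should go $A_k(Y/S)\rightarrow A_k(X/S)$, as you correctly write). Second, the one place where the relative setting genuinely bites, and which you flag but do not resolve, is the compatibility of the truncation $[\divv(r)]^1$ (the relative-codimension-one part) with pushforward: a priori $\ord_{V'}(r)$ can be nonzero for $V'$ with $\dim(\mathcal O_{V,V'})=1$ but $\codim_S(V',V)\gs 2$, and one must check these terms cause no harm. This is handled by \autoref{lem:rel_dim}: since $f_*$ preserves $\dim_S$ whenever it is nonzero, relative codimension is preserved in the equidimensional case, so the truncated norm formula follows from the untruncated one; this is precisely where Thorup's definition of $\dim_S$ does its work. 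With that point made explicit, and with the norm computation and the degree-zero statement for principal divisors on a complete curve over $R(W)$ imported from \cite[Appendix A and \S 1.4]{Fl} (both of which are valid over arbitrary Noetherian one-dimensional local rings and arbitrary base fields respectively), your sketch fills the gap the paper leaves to the literature.
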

 
In the previous proposition observe that by definition, if $V$ is an integral subscheme of $Y$ and $W=f(V)$, then $f_*[V]=0$ unless $[R(V):R(W)]$ is finite. In the latter case we also have that  
 $\dim_SW=\dim_SV$, since letting $U$ be the common image of $V$ and $W$ in $S$,
 $$
 \trdeg(R(V)/R(U))=\trdeg(R(V)/R(W))+\trdeg(R(W)/R(U))
 =\trdeg(R(W)/R(U)).
 $$
 
 \begin{remark} When $R=\kk$ is a field, we  write $A_*(X/\kk)$ for $A_*(X/S)$. In this case, $A_t(X/\kk)=A_t(X)$ as defined in \cite[\S 1.1]{Fl}, since $\dim _S$ is just  the ordinary dimension of a scheme in this case.
 \end{remark}

 Let $D$ be a Cartier divisor on $X$, and $V$ be a closed integral subscheme of $X$, with inclusion $j:V\rightarrow X$.  As in \cite[Definitions 2.2.2 \& 2.3]{Fl}, we define  
\begin{equation}\label{pbCart}
 \mbox{a Cartier divisor }j^*D\mbox{ on }V
 \end{equation}
  with support  contained in $V\cap |D|$, and whose class $[j^*D]$ is well-defined in $A_{\dim_SV-1}(V\cap |D|/S)$  as follows. If $V\not \subset |D|$, then $D$ naturally extends to a Cartier divisor $j^*D$  as we now describe.
  Suppose that $D$ is represented by $(U_i,f_i)$. Since $V$ is not in the support of $D$, $f_{\alpha}$ naturally restricts to an element $\overline f_i\in R(V)$. Then $j^*D$ is  the Cartier divisor $(U_i\cap V,  \overline f_i)$. If $V$ is contained in $|D|$, then we choose any Cartier divisor $C$ on $V$ such that $\mathcal O_V(C)\cong \mathcal O_Y(D)\otimes\mathcal O_V$ and let $j^*D=C$.
 The cycle class $[j^*D]$ is well-defined. Indeed, suppose  that $V\subset |D|$ and $C_1, C_2$ are Cartier divisors on $V$ such that $\mathcal O_V(C_1)\cong \mathcal O_X(D)\otimes\mathcal O_V\cong \mathcal O_V(C_2)$. Then $[C_1]-[C_2]=[\divv(r)]$ for some $r\in R(V)$, so $[C_1]^1-[C_2]^1=[\divv(r)]^1=0$ in $A_{\dim_SV-1}(V\cap |D|/S)$.

 With some modifications, the construction of $D\cdot [V]$ in \cite[Definition 2.3]{Fl} extends to our more general situation. Let $k=\dim_SV$. 
 We 
 define  
 \begin{equation}\label{CartInt}
 D\cdot [V]= [j^*D]^1=\sum \ord_W(j^*D)[W]\in A_{\dim_SV-1}(|D|\cap V/S),
  \end{equation}
 where the sum is over the closed integral subschemes of $V$ which satisfy $\mbox{codim}_S(W,V)=1$. 
 When $S$ is universally catenary, such as when $R=\kk$ is a field this construction is exactly the one of \cite[Definition 2.3]{Fl}.

 The conclusions of \cite[Proposition 2.3 \& Corollary 2.4.2]{Fl}  naturally extend to 
 the relative situation of this section, working in $A_*(-/S)$. We  make use of these results.  For the reader's convenience, we now state some conclusions of \cite[Proposition 2.3]{Fl} in our relative situation.  

\begin{proposition}[{the relative form of \cite[Proposition 2.3]{Fl}}]\label{Prop2.3} Let $X$ be a finite type $S$-scheme.
\begin{enumerate}
\item[{\rm (a)}] Let $D$ be a Cartier divisor on $X$ and $\alpha,\alpha'\in Z_k(X/S)$. Then
$$
D\cdot(\alpha+\alpha')=D\cdot\alpha+D\cdot\alpha'
\mbox{ in }A_{k-1}(|D|\cap(|\alpha|\cup|\alpha'|)/S).
$$
\item[{\rm (b)}] If $D,D'$ are Cartier divisors on $X$, and $\alpha\in Z_k(X/S)$, then
$$
(D+D')\cdot\alpha=D\cdot\alpha+D'\cdot\alpha\mbox{ in }
A_{k-1}((|D|\cup |D'|)\cap |\alpha|)/S).
$$
\item[{\rm (c)}](Projection Formula) Let $D$ be a Cartier divisor on $X$, $f:X'\rightarrow X$ be a proper morphism, $\alpha\in Z_k(X/S)$ and $g$ the morphism from $f^{-1}(|D|)\cap|\alpha|$ to $|D|\cap f(|\alpha|)$ induced by $f$. Then
$$
g_*(f^*D\cdot\alpha)=D\cdot f_*(\alpha)\mbox{ in } A_{k-1}(|D|\cap f(|\alpha|)/S).
$$

\end{enumerate}

\end{proposition}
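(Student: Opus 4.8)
The plan is to run Fulton's proof of \cite[Proposition 2.3]{Fl} with the relative dimension $\dim_S$ in place of the ordinary dimension, supplying the relative analogues of the two ingredients on which that proof rests. The dimension bookkeeping is controlled by \autoref{lem:rel_dim} together with the additivity of transcendence degree recorded after \autoref{PropRE}; in particular, for a proper dominant morphism $p\colon V'\to W$ of integral $S$-schemes one has $\dim_S V'-\dim_S W=\trdeg(R(V')/R(W))$, so $\dim_S W=\dim_S V'$ exactly when $[R(V'):R(W)]$ is finite (recall that $V'\to W$ is of finite type), which is the codimension-one bookkeeping used below. Part (a) needs no argument: by construction $D\cdot\alpha$ is the linear extension of $D\cdot[V]$ over the generators of $Z_k(X/S)$, so the identity holds term by term, both sides being read in the common group $A_{k-1}(|D|\cap(|\alpha|\cup|\alpha'|)/S)$.

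For part (b) I reduce by (a) to $\alpha=[V]$ with $V$ integral and, replacing $X$ by $V$, to the case $X=V$ with $k=\dim_S V$. Using the well-definedness of $[j^*D]$ from \autoref{pbCart}, I choose Cartier divisors $C,C'$ on $V$ representing the line-bundle classes $\mathcal O_X(D)\otimes\mathcal O_V$ and $\mathcal O_X(D')\otimes\mathcal O_V$ and with $V\not\subset|C|\cup|C'|$ (possible since a Cartier divisor on an integral scheme is given near the generic point by a nonzero rational function). Then $C+C'$ represents $\mathcal O_X(D+D')\otimes\mathcal O_V$, again with $V\not\subset|C+C'|$, and the equality $[C+C']^1=[C]^1+[C']^1$ holds already at the level of cycles because $\ord_W(fg)=\ord_W(f)+\ord_W(g)$ for every codimension-one integral $W\subset V$; this additivity is just the additivity of length in short exact sequences over the one-dimensional local ring $\mathcal O_{V,W}$. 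By the well-definedness of the three classes, this is exactly $(D+D')\cdot[V]=D\cdot[V]+D'\cdot[V]$.

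Part (c) is the substantive one. By (a) and the linearity of $f_*$, I reduce to $\alpha=[V']$ with $V'$ integral; setting $W=f(V')$ and $p=f|_{V'}\colon V'\to W$, I may replace $X'$ by $V'$ and $X$ by $W$, so that $p$ is a proper dominant morphism of integral schemes and $D\cdot[W]$ depends only on $\mathcal O_X(D)\otimes\mathcal O_W$. Choosing (as in (b)) a representative $C$ of this class with $W\not\subset|C|$, I get $f^*D\cdot[V']=[p^*C]^1$ with $V'\not\subset|p^*C|$, so that $p^*C$ is represented near the generic point of $V'$ by the image $\tilde r\in R(V')$ of the rational function $r\in R(W)$ defining $C$. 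The formula now follows from the pushforward behaviour of principal cycles: if $[R(V'):R(W)]$ is infinite then $f_*[V']=0$, while $\dim_S W<\dim_S V'$ gives $p_*[\divv(\tilde r)]^1=0$, so both sides vanish; if $e:=[R(V'):R(W)]$ is finite then $f_*[V']=e[W]$, and since $N_{R(V')/R(W)}(\tilde r)=r^{\,e}$ we obtain $p_*(f^*D\cdot[V'])=[\divv(r^{\,e})]^1=e\,(D\cdot[W])=D\cdot f_*[V']$, using part (a) for the last step.

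The one nontrivial input, and the main obstacle, is the relative form of \cite[Proposition 1.4]{Fl} invoked in part (c): for a proper dominant morphism $p\colon V'\to W$ of integral $S$-schemes and $r\in R(V')^{*}$, one has $p_*[\divv(r)]^1=0$ when $\dim_S W<\dim_S V'$ and $p_*[\divv(r)]^1=[\divv(N_{R(V')/R(W)}(r))]^1$ when $\dim_S W=\dim_S V'$. This is precisely the compatibility of proper pushforward with the order functions, and it is the key step behind the descent of $f_*$ to rational equivalence in \autoref{PropRE}; it is available in Thorup's framework \cite{Thor} with $\dim_S$ in place of the ordinary dimension. The only point requiring care on our side is that the codimension-one indexing $(-)^1$ in these formulas matches the relative grading, which is guaranteed by \autoref{lem:rel_dim}. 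Granting this, parts (a)--(c) follow as above, and the companion statements in \cite[Corollary 2.4.2]{Fl} extend by the same reductions.
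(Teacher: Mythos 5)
Your proposal is correct and follows exactly the route the paper intends: the paper gives no proof of this proposition, merely asserting that the arguments of \cite[Proposition 2.3]{Fl} ``naturally extend'' to $A_*(-/S)$ using Thorup's rational equivalence, and your write-up is a faithful elaboration of that assertion, correctly isolating the one substantive input (the relative form of \cite[Proposition 1.4]{Fl} on pushforward of principal cycles, which underlies \autoref{PropRE} and is supplied by \cite{Thor}) and the dimension bookkeeping from \autoref{lem:rel_dim}.
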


 Suppose that $i:V\rightarrow W$ is an inclusion of closed subschemes of $X$. Then by 
  \autoref{PropRE},
  we have a natural homomorphism $i_*:A_*(V/S)\rightarrow A_*(W/S)$. We will regard elements of $A_*(V/S)$ as elements of $A_*(W/S)$ when it is convenient. For computation of intersection products, there is no loss of necessary information  by doing this.  
 
 When $S=\kk$ is a field and $X$ is a complete $\kk$-scheme, we may use the construction of \cite[Definition 1.4]{Fl}, which is as follows.   For a zero-cycle $\alpha=\sum n_P[P]\in A_{0}(X/\kk)$ define
\begin{equation}\label{Req13}
\deg(\alpha) = \int_X\alpha =\sum_Pn_P[R(P):\kk].
\end{equation}
  This definition depends on the base field $\kk$. 
If   $\pi:X\rightarrow \Spec(\kk)$ is the structure morphism and $\pi_*(\alpha)=n[\kk]\in A_0(\kk)=\ZZ[\kk]$, then equivalently $\deg(\alpha)=\int_X\alpha=n$. 
As explained in the discussion of \cite[Definition 1.4]{Fl}, if $f:X\rightarrow Y$ is a proper morphism of complete $\kk$-schemes, then 
\begin{equation}\label{Req16}
\int_X\alpha=\int_Yf_*\alpha.
\end{equation}

Suppose that $\kk\rightarrow \kk'$ is a field extension, which induces a morphism $\iota:\Spec(\kk')\rightarrow \Spec(\kk)$. For 
 $\alpha=n[\kk']\in A_0(\kk')=A_0(X/\kk)$, we have $\iota_*\alpha=n[\kk':\kk][\kk]\in A_0(\kk)$. Therefore, if $\pi':X\rightarrow \Spec(\kk')$ is  the structure morphism of a $\kk'$-scheme $X$, then $(i\pi')_*\alpha=i_*\pi_*'\alpha=n[\kk':\kk][\kk]$. So the computation of $\int_X\alpha$ over $\kk$ is equal to $[\kk':\kk]$ times the one  of $\int_X\alpha$ over $\kk'$.

We  will  make use of the following computation when $X$ is proper over $S$.
Let $Z\subset X_{\kk}=X\times_S\Spec(R/\fm_R)$ be a closed subscheme. Then
\begin{equation}\label{changebase}
A_{-d}(Z/S)=A_0(Z/\kk),
\end{equation}
 and for $\alpha\in A_0(Z/\kk)$, $\int_{Z}\alpha=\int_{X_{\kk}}\alpha$,  computed over $\kk$.

\subsection{Intersection products over a local ring}\label{SecLRInt}
 Let $(R,\fm_R,\kk)$ be a $d$-dimensional local ring. Let $S=\Spec(R)$  and  $\pi:Y\rightarrow S$ be a birational proper morphism. 
Suppose that $k\in \NN$, $\alpha\in A_k(Y/S)$  and $F_1,\ldots, F_t$ are Cartier divisors on $Y$ such that $t\gs k+d$, and at least one of the $F_i$ or $\alpha$ is supported above $\fm_R$.  Then we define
\begin{equation}\label{Proddef1}
(F_1\cdot\ldots\cdot F_{t}\cdot \alpha)_R=
\begin{cases}
\int_{Y_{\kk}}F_1\cdot\ldots\cdot F_{t}\cdot  [\alpha]&\mbox{ if }t=k+d\\
0&\mbox{ if }t>k+d.
\end{cases}
\end{equation}
Here we are using the intersection theory of \autoref{SecRelInt}, and computing the product over $\kk$.  We first calculate the cycle class
$$
F_1\cdot\ldots\cdot F_{t}\cdot [\alpha]\in A_{-d}(Y_{\kk}/S),
$$
 by iterating the operation of \autoref{CartInt}. Here $Y_{\kk}=Y\times_S\Spec(R/\fm_R)$ is a proper $\kk$-scheme and 
 $$
 A_{-d}(Y_{\kk}/S)=A_0(Y_{\kk}/\kk)
 $$
  by \autoref{changebase}, so we may apply formula \autoref{Req13}.
  
  If $F_1,\ldots, F_d$ are Cartier divisors on $Y$ such that at least one of the $F_i$ are supported above $\fm_R$, we define
  \begin{equation}\label{Proddef2}
  (F_1\cdot\ldots \cdot F_d)_R=\int_{Y_{\kk}}F_1\cdot\ldots\cdot F_d\cdot [Y],
\end{equation}
where we compute the product over $\kk$. By  \autoref{prop:bound_rel_dim}, the class of the cycle  $[Y]$ is  in $A_0(Y/S)$, so 
$F_1\cdot\ldots\cdot F_d\cdot [Y]\in A_{-d}(Y_{\kk}/S)=A_0(Y_{\kk}/\kk)$.

Because every invertible sheaf on an integral scheme is isomorphic to the sheaf of a Cartier divisor, we may intersect with invertible sheaves also.

We now compile some useful propositions about this intersection product. 
We first observe that the intersection product over a local ring is multilinear and symmetric.
 
\begin{proposition}\label{CorR18} 
 Let   $(R,\fm_R,\kk)$ be a  $d$-dimensional   local ring   and $Y\rightarrow \Spec(R)$ be a proper and birational morphism.  
Suppose that $k\in \NN$ and $\alpha\in A_*(Y/S)$ is a cycle such that $[\alpha]_i=0$ for $i>k$, and $F_1,\ldots, F_t$ are Cartier divisors on $Y$ such that $t= k+d$, and at least one of the $F_i$ or $\alpha$ is supported above $\fm_R$. Then 
$(F_1\cdot\ldots\cdot F_{d-1}\cdot \alpha)_R$ is symmetric in the $F_i$ and is multilinear in the $F_i$ and $\alpha$.

Furthermore, suppose that $F_1,\ldots, F_d$ are Cartier divisors on $Y$ with support above $\m_R$. Then
$(F_1\cdot\ldots\cdot F_d)_R$ is multilinear and symmetric in the $F_i$.
\end{proposition}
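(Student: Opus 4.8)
The plan is to reduce both assertions, via the $\ZZ$-linearity of the degree map $\int_{Y_\kk}(-)$, to the corresponding symmetry and multilinearity statements for the cycle class $F_1\cdot\ldots\cdot F_t\cdot[\alpha]\in A_0(Y_\kk/\kk)$, and then to read these off from the relative forms of \autoref{Prop2.3} and of \cite[Corollary 2.4.2]{Fl} (the order-independence of iterated intersection with Cartier divisors). By \autoref{Proddef1} the quantity $(F_1\cdot\ldots\cdot F_t\cdot\alpha)_R$ is, by construction, just $\int_{Y_\kk}(-)$ applied to this class, so it suffices to work at the level of cycle classes.

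First I would reduce to the case where $\alpha$ is homogeneous of relative dimension $k$. Writing $\alpha=\sum_{i\ls k}\alpha_i$ with $\alpha_i\in Z_i(Y/S)$, iterating \autoref{CartInt} shows that intersecting $\alpha_i$ with the $t=k+d$ divisors yields a class in $A_{i-(k+d)}(Y_\kk/S)=A_{i-k}(Y_\kk/\kk)$. For $i<k$ this group vanishes, as there are no cycles of negative dimension over a field, so only $\alpha_k$ contributes and the whole product lands in $A_{-d}(Y_\kk/S)=A_0(Y_\kk/\kk)$, where $\int_{Y_\kk}(-)$ is defined. Thus I may assume $\alpha$ is homogeneous.

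Next I would establish the three properties at the level of the cycle class and then apply $\int_{Y_\kk}(-)$. Additivity in $\alpha$ comes from \autoref{Prop2.3}(a) applied at the innermost intersection and propagated outward through the remaining Cartier-divisor operations, each of which is additive on classes in $A_*(-/S)$ by the same result; linearity of $\int_{Y_\kk}(-)$ then gives multilinearity in $\alpha$. Symmetry in the $F_i$ follows from the relative form of \cite[Corollary 2.4.2]{Fl}: the pairwise commutativity $D\cdot(D'\cdot\beta)=D'\cdot(D\cdot\beta)$, applied at adjacent positions of the chain $F_1\cdot(F_2\cdot(\cdots(F_t\cdot[\alpha])))$, shows that $F_{\sigma(1)}\cdot\ldots\cdot F_{\sigma(t)}\cdot[\alpha]$ is independent of the permutation $\sigma$, since adjacent transpositions generate the symmetric group. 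Finally, multilinearity in each $F_i$ follows by using this symmetry to move $F_i$ to the outermost slot, splitting $(F_i+F_i')\cdot\beta=F_i\cdot\beta+F_i'\cdot\beta$ via \autoref{Prop2.3}(b), and applying linearity of $\int_{Y_\kk}(-)$. Throughout, the hypothesis that at least one of the $F_i$ or $\alpha$ is supported above $\fm_R$ guarantees that every class occurring lies on $Y_\kk$, so that all degrees are defined; when the varied factor is the one supplying this support, one varies within the Cartier divisors supported above $\fm_R$.

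For the second assertion I would simply note that by \autoref{Proddef2} we have $(F_1\cdot\ldots\cdot F_d)_R=\int_{Y_\kk}F_1\cdot\ldots\cdot F_d\cdot[Y]$, which is exactly \autoref{Proddef1} for the fixed homogeneous cycle $\alpha=[Y]\in A_0(Y/S)$ (so $k=0$, $t=d$), with all $F_i$ supported above $\fm_R$; hence it is the special case of the first assertion with $\alpha=[Y]$. The only genuine content is the commutativity of intersecting with Cartier divisors, namely the relative version of \cite[Corollary 2.4.2]{Fl}, which we are taking as established; granting it, the main point requiring care is the dimension bookkeeping of the second paragraph, ensuring that the relevant classes land in $A_0(Y_\kk/\kk)$ and that the lower-dimensional components of $\alpha$ drop out, so that $\int_{Y_\kk}(-)$ applies and the reduction to homogeneous $\alpha$ is legitimate.
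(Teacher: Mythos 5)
Your proposal is correct and follows essentially the same route as the paper's (much terser) proof: both reduce everything to parts (a) and (b) of \autoref{Prop2.3} for multilinearity and to the relative form of \cite[Corollary 2.4.2]{Fl} for symmetry, with the degree map $\int_{Y_\kk}(-)$ supplying $\ZZ$-linearity. Your added dimension bookkeeping showing that only the top homogeneous component of $\alpha$ contributes is a useful elaboration the paper leaves implicit, but it is not a different argument.
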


\begin{proof}  The conclusions follow from the definition of the intersection product and  parts (a) and (b) of  \autoref{Prop2.3}. 
Commutativity follows from \cite[Corollary 2.4.2]{Fl}, which is  valid in $A_*(Y/S)$.
\end{proof}

Next we show that the intersection product over local rings is invariant under proper and birational morphisms.

\begin{proposition}\label{ThmR1}
 Let   $(R,\fm_R,\kk)$ be a  $d$-dimensional   local ring   and 
 $Y\xrightarrow{f} X\xrightarrow{\pi} \Spec(R)$ be proper and birational morphisms.  Suppose that $F_1,\ldots,F_d$ are Cartier divisors on $X$ such that at least one of the $F_i$ is supported above $\fm_R$.  Then
$$
(f^*F_1\cdot\ldots\cdot f^*F_d)_R=(F_1\cdot\ldots\cdot  F_d)_R.
$$
If $\alpha\in A_k(Y/S)$ and $t\gs k+d$ then
$$
(f^*F_1\cdot\ldots\cdot F_t\cdot \alpha)_R=(F_1\cdot\ldots\cdot F_t\cdot f_*\alpha)_R.
$$
\end{proposition}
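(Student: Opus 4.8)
The plan is to prove the second, more general identity first and then obtain the first as the special case $\alpha=[Y]$. In both statements one should begin by disposing of the degenerate range: by the observation following \autoref{PropRE} the pushforward $f_*$ preserves the relative-dimension grading on the surviving components, so $f_*\alpha\in A_k(X/S)$; hence whenever $t>k+d$ both sides vanish by \autoref{Proddef1}, and we may assume $t=k+d$. Note also that the hypothesis ``at least one $F_i$ or $\alpha$ supported above $\fm_R$'' is stable under $f^*$ and $f_*$: since $f$ is a morphism over $S$ it carries the fibre over $\fm_R$ into the fibre over $\fm_R$. Consequently both products are defined and both iterated intersection classes are supported on the fibres $Y_\kk$ and $X_\kk$ respectively.

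The heart of the argument is that pushforward commutes with the iterated Cartier intersection. Writing $\alpha_0=\alpha$ and $\alpha_j=f^*F_j\cdot\alpha_{j-1}$ on $Y$, and $\beta_0=f_*\alpha$, $\beta_j=F_j\cdot\beta_{j-1}$ on $X$, I would show by induction on $j$ that $f_*\alpha_j=\beta_j$ in $A_*(X/S)$. The base case $j=0$ is the definition of $\beta_0$. For the inductive step, the projection formula \autoref{Prop2.3}(c) gives
$$
f_*\alpha_j=f_*\bigl(f^*F_j\cdot\alpha_{j-1}\bigr)=F_j\cdot f_*\alpha_{j-1}=F_j\cdot\beta_{j-1}=\beta_j,
$$
where we freely identify classes on the various shrinking supports with their images in $A_*(X/S)$, as permitted by the discussion following \autoref{Prop2.3}. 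Taking $j=t=k+d$ yields $f_*\alpha_t=\beta_t$ with $\alpha_t\in A_{-d}(Y_\kk/S)$ and $\beta_t\in A_{-d}(X_\kk/S)$; by the symmetry in \autoref{CorR18} the product $\alpha_t$ computes the quantity of \autoref{Proddef1} regardless of the order of intersection.

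To finish the second identity, restrict $f$ to the proper morphism $g\colon Y_\kk\to X_\kk$ of complete $\kk$-schemes; since $\alpha_t$ is supported over $\fm_R$ we have $f_*\alpha_t=g_*\alpha_t=\beta_t$. Using the identification $A_{-d}(-/S)=A_0(-/\kk)$ of \autoref{changebase} together with the invariance of degree under proper pushforward \autoref{Req16}, we obtain
$$
(f^*F_1\cdot\ldots\cdot f^*F_t\cdot\alpha)_R=\int_{Y_\kk}\alpha_t=\int_{X_\kk}g_*\alpha_t=\int_{X_\kk}\beta_t=(F_1\cdot\ldots\cdot F_t\cdot f_*\alpha)_R.
$$
For the first identity I would apply this with $\alpha=[Y]\in A_0(Y/S)$ (so $k=0$ and $t=d$); since $f$ is birational we have $f_*[Y]=[X]$ (see \autoref{SecCycle}), and unwinding \autoref{Proddef2} on $Y$ and on $X$ turns the resulting equation into $(f^*F_1\cdot\ldots\cdot f^*F_d)_R=(F_1\cdot\ldots\cdot F_d)_R$.

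I expect the only real friction to be bookkeeping: correctly tracking the supports through the inductive application of the projection formula, and verifying that the induced maps on the shrinking supports all sit compatibly under the single structure morphism to $\Spec(\kk)$, so that the final degree computation is legitimate. The substantive inputs---the projection formula \autoref{Prop2.3}(c), the symmetry of \autoref{CorR18}, and the invariance of degree \autoref{Req16}---are already available, so the argument is a careful assembly rather than a genuinely new idea.
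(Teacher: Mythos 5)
Your proposal is correct and follows essentially the same route as the paper: both rest on the projection formula \autoref{Prop2.3}(c), the identity $f_*[Y]=[X]$ for birational $f$, and the invariance of the degree under proper pushforward \autoref{Req16}. The only organizational difference is that you prove the general pushforward identity first (with the iteration of the projection formula made into an explicit induction) and then specialize to $\alpha=[Y]$, whereas the paper proves the first identity directly and notes the second is proven similarly.
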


\begin{proof} 
 Let $h$ be the morphism from $Y_{\kk}$ to  $X_{\kk}$ induced by $f$. 
Since $f$ is birational, $f_*[Y]=[X]$ in $A_0(X/S)$. Thus, since $|F_i|\subset X_{\kk}$ for some $i$,  we have by part (c) of  \autoref{Prop2.3} that
\begin{equation}\label{Req17}
h_*(f^*F_1\cdot\ldots\cdot f^*F_d\cdot [Y])=F_1\cdot\ldots\cdot F_d\cdot [X]
\end{equation}
in $A_{-d}(X_{\kk}/S)$. Thus
\begin{align*}
(f^*F_1\cdot\ldots\cdot f^*F_d)_R=\int_{Y_{\kk}}f^*F_1\cdot\ldots\cdot f^*F_d\cdot[Y]
&=\int_{X_{\kk}}h_*(f^*F_1\cdot\ldots\cdot f^* F_{d}\cdot[Y])\\
&= \int_{X_{\kk}}F_1\cdot\ldots\cdot F_d\cdot[X]=(F_1\cdot\ldots\cdot F_d)_R,
\end{align*}
where the  second equality is by   \autoref{Req16}, the third one from \autoref{Req17}.

The second formula is proven similarly.
\end{proof}

The following proposition offers a summation formula of the intersection product in terms of the highest dimensional prime ideals of the underlying local ring. 

\begin{proposition}\label{AssocForm}
Let $(R,\fm_R,\kk)$ be a $d$-dimensional  local ring and $\{P_1,\ldots, P_t\}$  be the set of minimal prime ideals $P_i$  of $R$ such that $\dim(R/P_i)=\dim(R)$.  Suppose that $\pi:Y\rightarrow \Spec(R)$ is proper and birational and $F_1,\ldots, F_d$ are Cartier divisors on $Y$ such that at least one of them has  support above $\m_R$. For $1\ls i\ls t$, let  $Y_i$ be the integral component of $Y$ such that $Y_i$ dominates $R/P_i$ and  let  $i_{Y_i}:Y_i\rightarrow Y$ be the natural closed immersion. Then
$$
(F_1\cdot\ldots\cdot F_d)_R=
\sum_{i=1}^t \ell_{R_{P_i}}(R_{P_i})(i_{Y_i}^*F_1\cdot\ldots\cdot i_{Y_i}^*F_d)_{R/P_i}.
$$
\end{proposition}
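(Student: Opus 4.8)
The plan is to expand the fundamental cycle $[Y]$ into its integral components, apply multilinearity of the intersection product in the cycle variable, and then match each component's contribution with the corresponding product over $R/P_i$. First I would start from the definition \eqref{Proddef2}, namely $(F_1\cdot\ldots\cdot F_d)_R=\int_{Y_\kk}F_1\cdot\ldots\cdot F_d\cdot[Y]$, and decompose $[Y]=[\mathcal O_Y]$. By the definition of the fundamental cycle in \autoref{SecCycle}, $[\mathcal O_Y]=\sum_{i}\ell_{\mathcal O_{Y,V_i}}(\mathcal O_{Y,V_i})[V_i]$ over the integral components $V_i$ of $Y$; since $\pi$ is birational these are in bijection with the minimal primes $P$ of $R$, and writing $Y_P$ for the component dominating $R/P$ the isomorphism $\mathcal O_{S,P}\cong\mathcal O_{Y,Y_P}$ gives $\ell_{\mathcal O_{Y,Y_P}}(\mathcal O_{Y,Y_P})=\ell_{R_P}(R_P)$. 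Thus $[Y]=\sum_P\ell_{R_P}(R_P)[Y_P]$, with each summand lying in $A_0(Y/S)$ by \autoref{prop:bound_rel_dim}. Multilinearity in the cycle (\autoref{CorR18}) then yields
$$(F_1\cdot\ldots\cdot F_d)_R=\sum_{P}\ell_{R_P}(R_P)\,(F_1\cdot\ldots\cdot F_d\cdot[Y_P])_R,$$
the sum ranging over all minimal primes $P$ of $R$.

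The heart of the argument is the per-component identity $(F_1\cdot\ldots\cdot F_d\cdot[Y_P])_R=(i_{Y_P}^*F_1\cdot\ldots\cdot i_{Y_P}^*F_d)_{R/P}$ for each minimal prime $P$. I would prove it by pushing the intersection onto the integral subscheme $Y_P$: iterating the compatibility of the Cartier operation \eqref{CartInt} with proper pushforward along the closed immersion $i_{Y_P}$, which is the projection-formula content of \autoref{Prop2.3}(c), gives $F_1\cdot\ldots\cdot F_d\cdot[Y_P]=(i_{Y_P})_*(i_{Y_P}^*F_1\cdot\ldots\cdot i_{Y_P}^*F_d\cdot[Y_P])$ in $A_{-d}(Y_\kk/S)$, after which \eqref{Req16} identifies the two degrees. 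This reduces the claim to comparing the intersection product of the restricted divisors on the fixed integral scheme $Y_P$ computed over the base $S=\Spec(R)$ against the one computed over $S_P=\Spec(R/P)$.

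For the primes $P$ with $\dim(R/P)<d$ both sides vanish, and the identity holds as $0=0$. The right-hand product is zero by the clause $t>k+\dim(R/P)$ of \eqref{Proddef1}, since we are intersecting $d$ divisors against a cycle of relative dimension $0$ over a base of dimension $\dim(R/P)<d$. On the left, each application of \eqref{CartInt} has support in components of Krull codimension exactly one (the orders $\ord_W$ are only defined on codimension-one subschemes, and the truncation to relative codimension one only discards components), so after $d$ steps the surviving cycle would have to be supported on codimension-$d$ subschemes of the $\dim(R/P)$-dimensional scheme $Y_P$, which do not exist; hence the class is $0$. This is the source of the restriction in the statement to the primes $P_i$ with $\dim(R/P_i)=d$. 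For such $P_i$, where the two base schemes have equal dimension $d$, I would finish by checking that the relative dimensions $\dim_S$ and $\dim_{S_{P_i}}$ agree on the subschemes of $Y_{P_i}$ entering the iterated product, so that the same components are kept at each truncation to relative codimension one and the two terminal zero-cycles coincide; for an integral $W\subseteq Y_{P_i}$ of codimension $l$, \autoref{lem:rel_dim} gives $\dim_S W\le -l$ and the analogous bound over $S_{P_i}$, together with $\dim_S W\le\dim_{S_{P_i}}W$, while bounding below through a closed point $p\in W$, where $\dim_S p=\dim_{S_{P_i}}p=-d$ by \autoref{prop:bound_rel_dim}, forces the equality $\dim_S W=\dim_{S_{P_i}}W=-l$ along the relevant chains.

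The main obstacle is precisely this last grading comparison. In a non-catenary or non-equidimensional $R$ the relative dimension over $\Spec(R)$ of a subscheme of $Y_{P_i}$ can a priori exceed the one over $\Spec(R/P_i)$, because the height of a prime $T\supseteq P_i$ in $R$ may be witnessed by a longer chain starting from a different minimal prime. I therefore expect the delicate point to be showing that this discrepancy never affects the terminal zero-cycle produced by the iterated Cartier intersection, equivalently that replacing $S$ by the equidimensional closed base $S_{P_i}$ of the same dimension leaves the degree unchanged. I would resolve it through the closed-point lower bound above, using \autoref{prop:bound_rel_dim} to pin every closed point of $Y_{P_i}$ at relative dimension $-d$ over both bases, which pins the two gradings together exactly on the components that contribute.
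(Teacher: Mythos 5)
Your proposal is correct and follows essentially the same route as the paper's proof: decompose $[Y]=\sum \ell_{R_{P_i}}(R_{P_i})[Y_i]$ over all minimal primes, use multilinearity and the projection formula of \autoref{Prop2.3}(c) to push each term onto $Y_i$, identify it with the product over $R/P_i$, and kill the components with $\dim(R/P_i)<d$ by the observation that a nonzero terminal zero-cycle forces a chain of $d$ distinct integral subschemes inside $Y_i$. The one point where you go beyond the paper --- verifying that the $\dim_S$ and $\dim_{S_{P_i}}$ gradings select the same components --- is handled correctly by your closed-point anchoring at relative dimension $-d$, which telescopes along any contributing chain to force every relative codimension to equal $1$ over both bases.
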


\begin{proof} Let $\{P_{t+1},\ldots,P_s\}$ be the minimal prime ideals of $R$ such that $\dim R/P_i<d$ for $t<i\ls s$. Clearly, this set might be empty, in which case we take $s=t$. For $i>t$, let $Y_i$ be the integral component of $Y$ such that $Y_i$ dominates $R/P_i$.
We have that  
$$
[Y]=\sum_{i=1}^s a_i[Y_i]
\quad  \text{where} 
\quad
a_i=\ell_{\mathcal O_{Y,Y_i}}(\mathcal O_{Y,Y_i})=\ell_{R_{P_i}}(R_{P_i}).
$$
Thus 
\begin{align*}
(F_1\cdot\ldots\cdot F_d)_R=\int_{Y_{\kk}}F_1\cdot\ldots\cdot F_d\cdot[Y]
&=\sum_{i=1}^s a_i\int_{Y_{\kk}}F_1\cdot\ldots\cdot F_d\cdot [Y_i]\\
&=\sum_{i=1}^s a_i\int_{(Y_i)_{\kk}}i_{Y_i}^*F_1\cdot\ldots\cdot i_{Y_i}^*F_d\cdot [Y_i]\\
&=\sum_{i=1}^s \ell_{R_{P_i}}(R_{P_i})(i_{Y_i}^*F_1\cdot\ldots\cdot i_{Y_i}^*F_d)_{R/P_i}.
\end{align*}
By \autoref{prop:bound_rel_dim}, $\dim_SY_i=0$ for each of the components $Y_i$; that is, 
$[Y_i]\in A_0(Y/S)$ for $1\ls i\ls s$. If $F_1\cdot \ldots\cdot F_d\cdot[Y_i]\ne 0$ in $A_{-d}((Y_i)_{\kk})$, then there exists a chain of distinct integral schemes 
$$
W_1\subset W_2\subset\cdots\subset W_d\subset Y_i,
$$
where each $W_j$ is a components of the cycle $F_j\cdot\ldots\cdot F_d\cdot [Y_i]$ for $1\ls j\ls d$. Thus the  ordinary  dimension of the scheme $Y_i$ is at least $d$, and so $i\ls t$. Thus 
$\int_{Y_{\kk}}F_1\cdot\ldots\cdot F_d\cdot [Y_i]=0$ if $i>t$ and so 
$$
(F_1\cdot\ldots\cdot F_d)_R=\sum_{i=1}^t \ell_{R_{P_i}}(R_{P_i})(i_{Y_i}^*F_1\cdot\ldots\cdot i_{Y_i}^*F_d)_{R/P_i}
$$
obtaining the desired conclusion.
\end{proof}

We end the section with a formula that gives a relation for the  intersection products  in a finite  local ring extension.

 \begin{proposition}\label{Theoremnormal*} Let $(R,\fm_R,\kk)$ be a $d$-dimensional  local domain.  
Let $T$ be a domain that is a finite extension of $R$ and  $\fm_1,\ldots,\fm_r$ be its maximal ideals. 
Set $T_i=T_{\m_i}$ for every $i$. 
Let $Y\rightarrow \Spec(R)$ be a birational projective morphism which is
the blowup of an ideal $K$ of $R$. Let $W$ be the blowup of $KT$ and  $\alpha:W\rightarrow Y$ be the induced morphism. 
Let 
$W_i=W\times_{\Spec(T)}\Spec(T_i)$. 
Let $F_1,\ldots,F_d$ be Cartier divisors on $Y$  such that at least one of them has  support above $\fm_R$. 
Then 
\begin{equation*}\label{eqI3}
[T:R](F_1\cdot\ldots\cdot F_d)_R=\sum_{i=1}^r[T/\m_{i}:R/\m_R](\alpha^*F_1\cdot\ldots\cdot \alpha^*F_d)_{T_i},
\end{equation*}
where $[T:R]$ is the degree of the extension of quotient fields of $T$ and $R$.
\end{proposition}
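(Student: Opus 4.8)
The plan is to reduce the stated equality to a single degree computation over $\kk$ on the special fibre $W_\kk=W\times_S\Spec(\kk)$, where $S=\Spec(R)$, and then to break that computation up according to the maximal ideals $\fm_1,\dots,\fm_r$ of $T$. First I would record the structural properties of $\alpha\colon W\to Y$. Since $R$ and $T$ are domains and $K\neq 0$, the Rees algebras $\oplus_n K^n$ and $\oplus_n(KT)^n$ are domains, so $Y$ and $W$ are integral; and $\alpha$ is proper because $W$ is proper over $S$ while $Y$ is separated over $S$. Moreover $\alpha$ is dominant with $R(W)=\mathrm{Frac}(T)$ and $R(Y)=\mathrm{Frac}(R)$, hence generically finite of degree $[R(W):R(Y)]=[T:R]$; consequently $\alpha_*[W]=[T:R]\,[Y]$ in $A_0(Y/S)$.

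The first main step is the intermediate identity
$$
[T:R]\,(F_1\cdot\ldots\cdot F_d)_R=\int_{W_\kk}\alpha^*F_1\cdot\ldots\cdot\alpha^*F_d\cdot[W],
$$
where the right-hand degree is taken over $\kk$. Writing $h\colon W_\kk\to Y_\kk$ for the morphism induced by $\alpha$ and iterating the projection formula (part (c) of \autoref{Prop2.3}) $d$ times, I get
$$
h_*\big(\alpha^*F_1\cdot\ldots\cdot\alpha^*F_d\cdot[W]\big)=F_1\cdot\ldots\cdot F_d\cdot\alpha_*[W]=[T:R]\,\big(F_1\cdot\ldots\cdot F_d\cdot[Y]\big)
$$
in $A_{-d}(Y_\kk/S)=A_0(Y_\kk/\kk)$. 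Applying $\int$ over $\kk$, using \autoref{Req16} and the definition \autoref{Proddef2} of $(-)_R$, produces the displayed identity.

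The second main step splits the right-hand side over the $\fm_i$. As $T$ is finite over $R$, the Artinian ring $T/\fm_R T$ decomposes as $\prod_{i=1}^r T_i/\fm_R T_i$, so $W_\kk=\coprod_{i=1}^r W\times_{\Spec T}\Spec(T_i/\fm_R T_i)$, and the closed points of $W_\kk$ over $\fm_i$ are precisely those in $W_i=W\times_{\Spec T}\Spec(T_i)$. The crucial point is that the localization $g_i\colon W_i\to W$ induces isomorphisms of local rings at each point of $W_i$: if $w\in W$ lies over a prime $\frp\subseteq\fm_i$, then every element of $T\setminus\fm_i$ is already a unit in $\mathcal O_{W,w}$, whence $\mathcal O_{W_i,w}=\mathcal O_{W,w}$. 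Since the Cartier intersections of \autoref{CartInt} are computed solely from orders along codimension-one subschemes, i.e.\ from local rings, the part of $\alpha^*F_1\cdot\ldots\cdot\alpha^*F_d\cdot[W]$ supported over $\fm_i$ equals $\alpha^*F_1\cdot\ldots\cdot\alpha^*F_d\cdot[W_i]$, a zero-cycle on $(W_i)_{\kk_i}$ naturally defined over $\kk_i=T/\fm_i$.

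Finally I would assemble the contributions. Summing over $i$ and using that $\int$ over $\kk$ of a cycle defined over $\kk_i$ equals $[\kk_i:\kk]$ times its $\kk_i$-degree (the field-extension relation recorded just before \autoref{changebase}) gives
$$
\int_{W_\kk}\alpha^*F_1\cdot\ldots\cdot\alpha^*F_d\cdot[W]=\sum_{i=1}^r[\kk_i:\kk]\,(\alpha^*F_1\cdot\ldots\cdot\alpha^*F_d)_{T_i},
$$
and combining this with the intermediate identity and the equality $[\kk_i:\kk]=[T/\fm_i:R/\fm_R]$ yields the proposition. I expect the second step to be the main obstacle: one must argue carefully that the zero-cycle on $W_\kk$ breaks up as the sum of the products computed on the individual $W_i$, i.e.\ that the passage from $W$ over the semilocal ring $T$ to its localizations $T_i$ is compatible with the iterated Cartier-divisor intersection. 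The local-ring isomorphism $\mathcal O_{W_i,w}\cong\mathcal O_{W,w}$ is exactly what makes this compatibility hold, though some care is needed because the intermediate cycles live on all of $W$ and only the final zero-cycle, being supported over $\fm_R$, decomposes along the $\fm_i$.
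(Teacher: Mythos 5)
Your proposal is correct and follows essentially the same route as the paper's proof: establish $\alpha_*[W]=[T:R][Y]$, apply the projection formula to reduce to a single degree computation on $W_\kk$, decompose $W_\kk$ as the disjoint union of the fibres over the $\fm_i$ (which the paper records as $|W_\kk|=\coprod_i|\phi^{-1}(\fm_i)|=\coprod_i|(W_i)_\kk|$), and convert each piece from a $\kk$-degree to a $\kk_i$-degree via the factor $[\kk_i:\kk]$. Your extra care about the localizations $T_i$ inducing isomorphisms of local rings on $W_i$ is a correct elaboration of a point the paper leaves implicit, but it does not change the argument.
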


\begin{proof} 
   Let $[W]$ be the class of $W$ in $A_0(W/S)$ and $[Y]$ be the class of $Y$ in $A_0(Y/S)$. Then $\alpha_*[W]=[T:R][Y]$ ($\alpha_*$ is defined in  \autoref{SecCycle}). Let  $h:W_{\kk}\rightarrow Y_{\kk}$ be the morphism induced by $\alpha$.   Then by part (c) of  \autoref{Prop2.3},
$$
h_*(\alpha^*F_1\cdot\ldots\cdot \alpha^*F_d\cdot [W])=F_1\cdot\ldots\cdot F_d\cdot\alpha_*[W]=[T:R](F_1\cdot\ldots\cdot F_d\cdot[Y])
$$
in $A_{-d}(Y_{\kk}/S)=A_0(Y_{\kk}/\kk)$. 

Let $\phi:W\rightarrow \Spec(T)$ be the induced morphism. We have that $|\phi^{-1}(\fm_i)|=|(W_i)_{\kk}|$ for all $i$ and $|W_{\kk}|$ is the disjoint union of the $|\phi^{-1}(\fm_i)|$ for $1\ls i\ls r$. 
Computing over $\kk$, 
\begin{align*}
[T:R]\int_{Y_{\kk}}F_1\cdot\ldots\cdot F_d\cdot[Y]&=\int_{W_{\kk}}\alpha^*F_1\cdot\ldots\cdot \alpha^*F_d\cdot[W]\\
&=\sum_i\int_{|\phi^{-1}(\m_i)|}\alpha^*F_1\cdot\ldots\cdot \alpha^*F_d\cdot[W]\\
&=\sum_i\int_{(W_i)_{\kk}}\alpha^*F_1\cdot\ldots\cdot \alpha^* F_d\cdot[W_i].
\end{align*}
 As explained after \autoref{Req16}, if $\kk_i=T/\fm_i$, then the computation of $\int_{(W_i)_{\kk}}\alpha^*F_1\cdot\ldots\cdot \alpha^*F_d\cdot[W_i]$ over $\kk$ is equal to $[\kk_i:\kk]$ times its calculation over $\kk_i$. Finally,   we have 
 $$
[T:R](F_1\cdot\ldots\cdot F_d)_R=\sum_i[\kk_i:\kk](\alpha^*F_1\cdot\ldots\cdot \alpha^*F_d)_{T_i}.
$$
\end{proof}

\subsection{Intersection theory on proper schemes over an Artinian ring}\label{SecIPPS} In this subsection we show that the intersection product introduced in \autoref{SecRelInt} can be computed using Snapper-Mumford-Kleiman intersection theory.

Let $X$ be a $d$-dimensional proper scheme 
over  an Artinian  ring $\kk$ (note $\kk$ is not necessarily a field here), $\mathcal L_1,\ldots,\mathcal L_t$ be invertible sheaves on $X$, and
$\mathcal F$ be a coherent sheaf on $X$ of dimension $\ls t$. 
The {\it intersection product} 
$$
I(\mathcal L_1\cdot\ldots\cdot\mathcal L_t;\mathcal F)_{\kk}=I(\mathcal L_1\cdot\ldots\cdot\mathcal L_t;\mathcal F)_{\kk}
$$
is the coefficient of $n_1\cdots n_t$ in the monic numerical polynomial
\begin{equation*}
\chi_{\kk}(\mathcal L_1^{n_1}\otimes\cdots\otimes\mathcal L_t^{n_t}\otimes \mathcal F)
=\sum_{i=0}^d(-1)^i \ell_\kk \left(H^i(X,\mathcal L_1^{n_1}\otimes\cdots\otimes\mathcal L_t^{n_t}\otimes \mathcal F)\right),
\end{equation*}
where $\chi_{\kk}(-)$ denotes the Euler 
characteristic. When the base ring $\kk$ is understood, we will write $I(\mathcal L_1\cdot\ldots\cdot \mathcal L_t;\mathcal F)$.

The definition of the intersection product is given in \cite[Definition I.2.1]{Kl} and  \cite[Definition B.8]{Kl2}.  As in \cite{Kl}, we write $I(\mathcal L_1\cdot\ldots\cdot\mathcal L_t)$ or $I(\mathcal L_1\cdot\ldots\cdot \mathcal L_t\cdot X)$ for $I(\mathcal L_1\cdot\ldots\cdot\mathcal L_t;\mathcal O_X)$. 
 We refer the reader to \cite[\S I.2]{Kl} and  \cite[Appendix B]{Kl2} for details of  important properties of this intersection product. 
 Although \cite{Kl} assumes that $\kk$ is  an algebraically closed field, this assumption is not necessary for the results 
 that we use from there.

We now show that the intersection product of \autoref{SecLRInt}  can be computed using the intersection product of this section. 

\begin{theorem}\label{equiv}
Let  $(R,\m_R,\kk)$ be a $d$-dimensional  local ring. Let $\pi:Y\rightarrow \Spec(R)$ be a birational proper morphism and   $[Y]=\sum a_jY_j$.
Let $E_1,\ldots, E_r$ be the irreducible components of $\pi^{-1}(\m_R)$ of dimension $d-1$.   
Let $F_1,\ldots,F_d$ be Cartier divisors on $Y$ such that $F_d$ has  support above   $\m_R$. 
Each $F_i$ restricts naturally to a Cartier divisor $F_i|Y_j$ on every $Y_j$.
Then
\begin{equation}\label{LocIntgen}
(F_1\cdot\ldots\cdot F_d)_R=\sum_{i}  m_i 
I\big((\mathcal O_{Y}(F_1)\otimes\mathcal O_{E_i}\cdot\ldots \cdot \mathcal O_{Y}(F_{d-1})\otimes\mathcal O_{E_i};\mathcal O_{E_i}\big)_{\kk},
\end{equation}
where 
$m_i=\sum_ja_j\ord_{E_i}(F_d|Y_j)$. Here we set $\ord_{E_i}(F_d|Y_j)=0$ if $E_i\not\subset Y_j$. 
\end{theorem}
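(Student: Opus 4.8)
The plan is to reduce the left-hand side, which is an intersection product over the local ring $R$, to a computation over the residue field $\kk$ on the fiber $Y_\kk$, and then recognize the resulting zero-cycle's degree as the Snapper-Mumford-Kleiman intersection number on each component $E_i$. First I would unwind the definition \autoref{Proddef2}: since at least one divisor, namely $F_d$, has support above $\fm_R$, we have
$$
(F_1\cdot\ldots\cdot F_d)_R=\int_{Y_\kk}F_1\cdot\ldots\cdot F_d\cdot[Y].
$$
Using $[Y]=\sum_j a_j[Y_j]$ together with the multilinearity of \autoref{Prop2.3}(a), I would distribute to get $\sum_j a_j\int_{Y_\kk}F_1\cdot\ldots\cdot F_d\cdot[Y_j]$. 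The key structural observation is that the last intersection $F_d\cdot\alpha$ is computed via $\ord_W(j^*F_d)$ for $W$ of relative codimension one, and because $F_d$ is supported above $\fm_R$, the only components $W$ that survive lie in the fiber $\pi^{-1}(\fm_R)$; the ones of the correct dimension are exactly the $E_i$.

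The crucial reduction step is to peel off the factor $F_d$ last. Applying \autoref{CartInt} to compute $F_d\cdot[Y_j]$, I would argue that this cycle is $\sum_i \ord_{E_i}(F_d|Y_j)[E_i]$ modulo components of lower ordinary dimension or components not contributing to the top-dimensional intersection; here one uses that $E_1,\ldots,E_r$ are precisely the codimension-one integral subschemes of the fiber having $\dim_S E_i$ one higher than a closed point, i.e. the $(d-1)$-dimensional components of $\pi^{-1}(\fm_R)$. Collecting coefficients and setting $m_i=\sum_j a_j\ord_{E_i}(F_d|Y_j)$ (with the convention $\ord_{E_i}(F_d|Y_j)=0$ when $E_i\not\subset Y_j$), the product becomes
$$
\sum_i m_i\int_{Y_\kk}F_1\cdot\ldots\cdot F_{d-1}\cdot[E_i].
$$

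It then remains to identify $\int_{Y_\kk}F_1\cdot\ldots\cdot F_{d-1}\cdot[E_i]$ with the Snapper-Mumford-Kleiman number $I\big(\mathcal O_Y(F_1)\otimes\mathcal O_{E_i}\cdot\ldots\cdot\mathcal O_Y(F_{d-1})\otimes\mathcal O_{E_i};\mathcal O_{E_i}\big)_\kk$. Since each $E_i$ is a proper integral $\kk$-scheme of dimension $d-1$ and the $F_1,\ldots,F_{d-1}$ restrict to Cartier divisors on $E_i$, I would invoke the comparison between Fulton-style intersection via iterated $\ord_W(j^*D)$ and the Euler-characteristic definition of the intersection product. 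Concretely, the degree $\int$ of the iterated Cartier-divisor product of $d-1$ divisors against the $(d-1)$-dimensional cycle $[E_i]$ agrees with the coefficient of $n_1\cdots n_{d-1}$ in $\chi_\kk(\mathcal L_1^{n_1}\otimes\cdots\otimes\mathcal L_{d-1}^{n_{d-1}}\otimes\mathcal O_{E_i})$; this is the standard equivalence between the two formulations of intersection multiplicity on a proper scheme over an Artinian base (in this case a field), which may be taken from \cite{Kl} and the extension of \cite[Ch.~2]{Fl} recorded in \autoref{SecRelInt}. The main obstacle I anticipate is the bookkeeping in the peeling-off step: one must verify carefully that no components $W$ of $F_d\cdot[Y_j]$ other than the $E_i$ contribute to the final degree-$0$ cycle, which requires a dimension count showing that any such $W$ of too-low ordinary dimension cannot support a full chain $W_1\subset\cdots\subset W_{d-1}\subset W$ of the kind needed to produce a nonzero zero-cycle—this is exactly the argument used at the end of the proof of \autoref{AssocForm}, and I would reuse it here.
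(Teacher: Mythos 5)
Your first half matches the paper's argument exactly: unwind \autoref{Proddef2}, distribute over $[Y]=\sum_j a_j[Y_j]$, and peel off $F_d$ to land on $\sum_i m_i\int_{Y_\kk}F_1\cdot\ldots\cdot F_{d-1}\cdot[E_i]$. One small simplification you missed: the bookkeeping you worry about at the end is handled automatically by the definition \autoref{CartInt}, since $F_d\cdot[Y_j]$ only ever involves integral subschemes $W$ with $\codim_S(W,Y_j)=1$, and for subschemes of the fiber $\pi^{-1}(\fm_R)$ the paper observes that $\dim_S W=-1$ if and only if $\dim W=d-1$; so the cycle $F_d\cdot[Y_j]$ is literally $\sum_i\ord_{E_i}(F_d|Y_j)[E_i]$ with no lower-dimensional correction terms, and the \autoref{AssocForm}-style chain argument is not needed at this stage.

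The genuine gap is in your last step. The identity $\int_{E_i}F_1\cdot\ldots\cdot F_{d-1}\cdot[E_i]=I(\mathcal O_{E_i}(\iota^*F_1)\cdot\ldots\cdot\mathcal O_{E_i}(\iota^*F_{d-1}))_\kk$ is the actual content of the theorem, and the sources you cite do not contain it: \cite{Kl} develops only the Euler-characteristic product (it predates Fulton's book), and \autoref{SecRelInt} of the paper records no such comparison. The correct external reference is \cite[Example 18.3.6]{Fl}, which rests on Riemann--Roch; the paper mentions this in a remark but deliberately avoids it, because the stated purpose of this section is a self-contained proof. The paper's argument for this step is: restrict to $E_i$ and apply the projection formula, use Chow's lemma to replace $E_i$ by a projective variety (invoking \cite[Lemma B.15]{Kl2} for invariance of the $I(-)_\kk$ side), and then induct on dimension by writing the restriction of each $F_{d-k}$ to a $(d-k)$-dimensional subvariety $W$ as a difference $H_2-H_1$ of very ample effective divisors and passing through the cycle-class map $A_{d-k}(E/\kk)\rightarrow K_{d-k}(E)/K_{d-k-1}(E)$ of \cite[Example 1.6.5]{Fl} to match the two ways of peeling off a divisor, with base case a closed point where both sides equal $[R(P):\kk]$. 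Without either this induction or an appeal to \cite[Example 18.3.6]{Fl}, your proof is incomplete at precisely the point the theorem is meant to establish.
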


\begin{proof} 
Let $S=\Spec(R)$. We have that $\dim_SY_i=0$ for all $i$ by  \autoref{prop:bound_rel_dim}. If $V\subset \pi^{-1}(\fm_R)$ is closed integral subscheme then $\dim_SV=-1$ if and only if $\trdeg(R(V)/\kk)=d-1$, and the latter holds if and only if $\dim(V)=d-1$ (the ordinary dimension of $V$), since $V$ is of finite type over $\kk$. Thus, from formula \autoref{CartInt}, 
$$
F_d\cdot [Y_i]=\sum_j\ord_{E_j}(F_d|Y_i)[E_j]\in A_{-1}(Y_{\kk}/S)
$$
and so
\begin{align*}
(F_1\cdot\ldots\cdot F_d)_R&=\sum_ja_i\int_{Y_{\kk}}F_1\cdot\ldots\cdot F_d\cdot [Y_i]
=\sum_ia_i\big(\sum_j\ord_{E_j}(F_d|Y_i)\int_{Y_{\kk}}F_1\cdot\ldots\cdot F_{d-1}\cdot [E_j]\big)\\
&=\sum m_i\int_{E_i}F_1\cdot\ldots\cdot F_{d-1}\cdot [E_i].
\end{align*}
We have thus reduced to showing that 
$$
I(\mathcal O_Y(F_1)\otimes\mathcal O_{E_i}\cdot \ldots\cdot \mathcal O_Y(F_{d-1})\otimes\mathcal O_{E_i})_{\kk}
=\int_{E_i}F_1\cdot\ldots\cdot F_{d-1}\cdot [E_i]
$$
for all $i$.

Let $\iota: E_i\rightarrow Y$ be the inclusion and define Cartier divisors $\iota^*F_j$ for $1\ls j\ls d-1$ in $A_{-2}(E_i/S)=A_{d-1}(E_i/\kk)$ by \autoref{pbCart}. By part (c) of  \autoref{Prop2.3},
$$
\iota_*(\iota^*F_1\cdot\ldots\cdot \iota^*F_{d-1}\cdot [E_i])=F_1\cdot\ldots\cdot F_{d-1}\cdot [E_i]
$$
in $A_0(E_i/\kk)$, 
so by \autoref{Req16},
$$
\int_{E_i}\iota^*F_1\cdot\ldots\cdot \iota^*F_{d-1}\cdot [E_i]=\int_{E_i}F_1\cdot\ldots\cdot F_{d-1}\cdot [E_i].
$$ 
Since $\mathcal O_Y(F_j)\otimes\mathcal O_{E_i}\cong \mathcal O_{E_i}(\iota^*(F_j))$,
we have reduced to showing that
\begin{equation}\label{SameInt}
\int_{E_i}\iota^*F_1\cdot\ldots\cdot \iota^*F_{d-1}\cdot [E_i]=I(\mathcal O_{E_i}(\iota^*F_1)\cdot\ldots\cdot \mathcal O_{E_i}(\iota^*F_{d-1}))_{\kk}
\end{equation}
for all $i$. 

Since $E_i$ is a complete $\kk$-variety, by \cite[Corollary II.5.6.2]{EGAII} (Chow's lemma), there exists a birational projective morphism $\alpha:\overline E_i\rightarrow E_i$ such that $\overline E_i$ is a projective $\kk$-variety.
Now
$$
\int_{\overline E_i}\alpha^*\iota^*F_1\cdot\ldots\cdot \alpha^*\iota^*F_{d-1}\cdot [\overline E_i]
=\int_{E_i}\iota^*F_1\cdot\ldots\cdot \iota^*F_{d-1}\cdot [E_i]
$$
by part (c) of  \autoref{Prop2.3} and \autoref{Req16}, and
$$
I(\mathcal O_{\overline E_i}(\alpha^*\iota^*F_1)\cdot\ldots\cdot \mathcal O_{\overline E_i}(\alpha^*\iota^*F_{d-1}))_{\kk}
=
I(\mathcal O_{E_i}(\iota^*F_1)\cdot\ldots\cdot \mathcal O_{E_i}(\iota^*F_{d-1}))_{\kk}
$$
by \cite[Lemma B.15]{Kl2}. Thus we have reduced to showing that 
\begin{equation}\label{Projeq}
\int_EF_1\cdot\ldots\cdot F_{d-1}\cdot [E]=I(\mathcal O_E(F_1)\cdot\ldots\cdot \mathcal O_E(F_{d-1}))_{\kk},
\end{equation}
if $E$ is a $d-1$ dimensional projective $\kk$-variety and $F_1,\ldots, F_{d-1}$ are Cartier divisors on $E$.

Suppose that $W$ is a $d-k$-dimensional closed subvariety of $E$. Let $\iota:W\rightarrow E$ be the inclusion, and let $G_{d-k}=\iota^*F_{d-k}$ be defined by \autoref{pbCart}.
Thus as defined in \autoref{CartInt},
\begin{equation}\label{eq:cyc_Gd1}
F_{d-k}\cdot[W]=[G_{d-k}]=\sum \ord_{W_j}G_{d-k}[W_j]\in A_{d-k-1}(E/\kk).
\end{equation}

Since $W$ is a projective variety over $\kk$,  there exists a very ample divisor $H$ on $W$.  Therefore there exists $a>0$ and effective Cartier divisors $H_1$ and $H_2$ on $W$ such that
$H_2$ is linearly equivalent to $aH+G_{d-k}$ and $H_1$ is linearly equivalent to $aH$ on $W$. Now $G_{d-k}$ is linearly equivalent to $H_2-H_1$ so there exists $r\in R(W)^*$ such that $[\divv(r)]=[G_{d-k}]-([H_2]-[H_1])$. 
The definition of $\divv(r)$ is in  \autoref{SecCycle} and \cite[\S 1.3]{Fl}. We thus have 
$$
 F_{d-k}\cdot[W]=[G_{d-k}]=[H_2]-[H_1]
$$ in $A_{d-k}(E/\kk)$. 
In \cite[Definition B.1]{Kl2}, $K_r(E)$ is defined to be the subgroup of the Grothendieck group  $K(E)$ which is generated by the classes of coherent sheaves whose support has dimension less than or equal to $r$. By \cite[Example 1.6.5]{Fl}, there is a well defined homomorphism $A_{d-k}(E/\kk)\rightarrow K_{d-k}(E)/K_{d-k-1}(E)$ that takes the class of a $d-k$-dimensional subvariety $V$ of $E$ to the class of $\mathcal O_V$. Thus
\begin{equation}\label{LI}
[\mathcal O_{H_2}]-[\mathcal O_{H_1}]-\sum_j\ord_{W_j}G_{d-k}[\mathcal O_{W_j}]\in K_{d-3}(E),
\end{equation}  
and so
\begin{align*}
&I(\mathcal O_E(F_1)\otimes\mathcal O_{W}\cdot\ldots\cdot \mathcal O_{E}(F_{d-k})\otimes\mathcal O_{W})_{\kk}\\
&=I(\mathcal O_E(F_1)\otimes\mathcal O_{W}\cdot\ldots\cdot \mathcal O_{E}(F_{d-k-1})\otimes\mathcal O_{W}\cdot \mathcal O_{W}(H_2-H_1))_{\kk}\\
&=I(\mathcal O_E(F_1)\otimes\mathcal O_{W}\cdot\ldots\cdot \mathcal O_{E}(F_{d-k-1})\otimes\mathcal O_{W}\cdot \mathcal O_{W}(H_2))_{\kk}\\
&\,\,\,\,\,-I(\mathcal O_E(F_1)\otimes\mathcal O_{W}\cdot\ldots\cdot \mathcal O_{E}(F_{d-k-1})\otimes\mathcal O_{W}\cdot\mathcal O_{W}(H_1))_{\kk}\\
&=I(\mathcal O_E(F_1)\otimes\mathcal O_{W}\cdot\ldots\cdot \mathcal O_{E}(F_{d-k-1})\otimes\mathcal O_{W};\mathcal O_{H_2})_{\kk}\\
&\,\,\,\,\,-I(\mathcal O_E(F_1)\otimes\mathcal O_{W}\cdot\ldots\cdot \mathcal O_{E}(F_{d-k-1})\otimes\mathcal O_{W}; \mathcal O_{H_1})_{\kk}\\
&=\sum_j \ord_{W_j}G_{d-1}I(\mathcal O_E(F_1)\otimes\mathcal O_{W_j}\cdot\ldots\cdot \mathcal O_E(F_{d-k-1})\otimes \mathcal O_{W_j})_{\kk},
\end{align*}
where the third equality follows from  \cite[Lemma B.13]{Kl2} (or \cite[Proposition I.2.4 and Corollaries I.2.1 \& I.2.2]{Kl}) and the last equality from \autoref{LI} and \cite[Theorem B.9]{Kl2}.  
By \autoref{eq:cyc_Gd1}, we have  that
$$
\int_EF_1\cdot\ldots\cdot F_{d-k}\cdot [W]=\sum_j\ord_{W_j}G_{d-1}\int_EF_1\cdot\ldots\cdot F_{d-k-1}\cdot [W_j].
$$
By induction on $k$, we have reduced the proof of \autoref{Projeq} to showing that 
$$
I(R(P))_{\kk}=\int_{E}[P]=\deg(P)
$$
for $P$  a closed point on  $E$ and $\deg(P)$ as defined in \autoref{Req13}. By their definitions, both  $I(R(P))_{\kk}$ and $\deg(P)$ equal $[R(P):\kk]$. 
Thus the conclusion of the theorem follows.
\end{proof}

\begin{remark} A different proof of \autoref{equiv} can be obtained by applying\cite[Example 18.3.6]{Fl}, which states that
$$
\int_X(-)=I(-)_{\kk}
$$
on complete $\kk$-schemes $X$, as a consequence of  the Riemann-Roch theorem on algebraic schemes developed in \cite[\S 18.3]{Fl}. The proof of \autoref{equiv} thus follows from  \autoref{SameInt}  and by using 
  \cite[Example 18.3.6]{Fl} on the $E_i$.
\end{remark}

\begin{remark}\label{LocIntRmk}  If $F_d$ is an effective Cartier divisor, then, 
following the notation in \autoref{equiv}, we have 
$$
m_i=\ell_{\mathcal O_{Y,E_i}}(\mathcal O_{Y,E_i}/\mathcal O_Y(-F_d)_{E_i})
$$
for all $E_i$. Furthermore,
$$
(F_1\cdot\ldots\cdot F_d)_R=I(\mathcal O_Y(F_1) \cdot\ldots\cdot \mathcal O_Y(F_{d-1});\mathcal O_Y/\mathcal O_Y(-F_d))_{\Lambda}
$$
where $\Lambda$ is the Artinian local ring $\Lambda=R/\mathcal O_Y(-F_d)\cap R$.
\end{remark}
\begin{proof} Set $A=\mathcal O_{Y,E_i}$ with maximal ideal $\fm_A$ and let $f\in A$ be a local equation of $F_d$ in $A$.
The element  $f$ is a nonzero divisor of $A$ since $F_d$ is an effective Cartier divisor. If $f$ is a unit in $A$, then $f$ is a unit in $A/P$ for all minimal primes $P$ of $A$, so that $\ell_{\mathcal O_{Y,E_i}}(\mathcal O_{Y,E_i}/\mathcal O_Y(-F_d)_{E_i})=0$ and $\ord_{E_i}(F_d|Y_j)=0$ for all $j$.  Therefore 
$$
m_i=\ell_{\mathcal O_{Y,E_i}}(\mathcal O_{Y,E_i}/\mathcal O_Y(-F_d)E_i)=0.
$$
Thus we may assume that $f$ is not a unit in $A$.

Let $I=fA$, an $\m_A$-primary ideal. By the associativity formula \cite[Theorem 11.2.4]{HS},
$$
e(I)=\sum e(I(A/P))\ell_{A_P}(A_P)
$$
where the sum is over the minimal primes $P$ of $A$.  $f$ is a nonunit and nonzero divisor in $A$ and in $A/P$ for all $P$, so $\depth(A)>0$ and $\depth(A/P)>0$ for all $P$. Therefore $A$ and the $A/P$ are Cohen-Macaulay. 
Thus, by \cite[Proposition 11.1.10]{HS}
$$
e(I)=\ell_A(A/I)=\ell_{\mathcal O_{Y,E_i}}(\mathcal O_{Y,E_i}/\mathcal O_Y(-F_d)_{E_i})
$$
and if $A/P\cong \mathcal O_{Y_j,E_i}$, then
$$
e(I(A/P))=\ell_{A/P}((A/P)/f(A/P))=\ord_{E_i}(F_d\mid Y_j).
$$ 
Moreover, 
$a_j=\ell_{A_P}(A_P)$, giving the first formula in the statement. 

The second statement now follows since 
\begin{align*}
I(\mathcal O_Y(F_1)\cdot\ldots\cdot \mathcal O_Y(F_{d-1});\mathcal O_{Y}/\mathcal O_Y(-F_d))_{\Lambda}
&=\sum_i m_iI\big((\mathcal O_{Y}(F_1)\otimes\mathcal O_{E_i}\cdot\ldots\cdot \mathcal O_{Y}(F_{d-1})\otimes\mathcal O_{E_i};\mathcal O_{E_i}\big)_{\Lambda}\\
&=\sum_i m_iI\big((\mathcal O_{Y}(F_1)\otimes\mathcal O_{E_i}\cdot\ldots\cdot \mathcal O_{Y}(F_{d-1})\otimes\mathcal O_{E_i};\mathcal O_{E_i}\big)_{\kk}\\
&=(F_1\cdot\ldots\cdot F_d)_R,
\end{align*}
by \cite[Lemma B.12]{Kl2}, and since each $E_i$ is a $\kk$-variety and $\kk$ is a quotient of $\Lambda$.
\end{proof}

\section{Multiplicities of ideals as intersection products}\label{Mult}

This section is devoted to the proof  and some applications of a very general form of a  theorem of Ramanujam which expresses Hilbert-Samuel multiplicities in intersection-theoretic terms  \cite{Ra}.

Some other versions of this theorem are realized  in  \cite[\S1.6 B]{Laz1}, and more recently in \cite[Proposition 2.6]{BLQ}. Related formulas are proven in \cite{KT}.

\begin{theorem}\label{TheoremMultInt} 
Let $(R,\fm_R,\kk)$ be a $d$-dimensional local ring  and $I\subset R$ be an $\m_R$-primary ideal. Let $\pi:Y\rightarrow \Spec(R)$ be the blowup of $I$. The Hilbert-Samuel multiplicity  of $I$ is equal to 
$$ 
e(I)=-(\mathcal L_Y^d)_{R},
$$
where $\mathcal L_Y=I\mathcal O_Y$. More generally, let $\alpha:W\rightarrow \Spec(R)$ be a proper birational morphism such that $I\mathcal O_W$ is invertible. Then
$e(I)=-((I\mathcal O_W)^d)_R$.
\end{theorem}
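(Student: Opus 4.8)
The plan is to reduce to the essential case, and then to compare Hilbert--Samuel lengths on $\Spec(R)$ with Euler characteristics on the blowup. First I would dispose of the two generalities. For an arbitrary $W$ as in the second assertion, the universal property of blowing up gives a unique $\Spec(R)$-morphism $f\colon W\to Y$ onto the blowup $Y$ of $I$; it is proper and birational and satisfies $I\mathcal O_W=f^*(I\mathcal O_Y)$, so \autoref{ThmR1} gives $((I\mathcal O_W)^d)_R=((I\mathcal O_Y)^d)_R$ and it suffices to treat $W=Y$. Next, applying \autoref{AssocForm} to $F_1=\cdots=F_d$ representing $\mathcal L_Y=I\mathcal O_Y$ and comparing it with the associativity formula for multiplicities \cite[Theorem 11.2.4]{HS} (both sums run over the primes $P$ with $\dim(R/P)=d$) lets me deduce the statement for $R$ from the statement for each domain $R/P$, applied to the corresponding integral component $Y_i$. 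Thus I may assume $R$ is a $d$-dimensional local domain and $Y\to\Spec(R)$ is the blowup of $I$, so $Y$ is integral and $\mathcal L:=I\mathcal O_Y=\mathcal O_Y(-E)$ is relatively ample, where $E$ is the exceptional Cartier divisor.

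Writing $\chi_R(-)=\sum_i(-1)^i\ell_R(H^i(Y,-))$ for sheaves supported over $\fm_R$ (whose cohomology has finite length and agrees with $R^i\pi_*$ by affineness of $\Spec(R)$), the core is to prove
\[
\ell_R(R/I^n)=\chi_R(\mathcal O_Y/I^n\mathcal O_Y)+O(n^{d-1})
\]
and to compute the right-hand side. For the computation I would filter $\mathcal O_Y/I^n\mathcal O_Y$ by powers of $\mathcal L$: the successive quotients are $\mathcal L^k\otimes\mathcal O_E$ for $0\le k<n$, all supported over $\fm_R$, so additivity of $\chi_R$ gives $\chi_R(\mathcal O_Y/I^n\mathcal O_Y)=\sum_{k=0}^{n-1}\chi_R(\mathcal L^k\otimes\mathcal O_E)$. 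Since $\mathcal L^k\otimes\mathcal O_E$ is a sheaf of $R/I$-modules on $E$, which is proper over the Artinian ring $R/I$, Snapper's theorem makes $\chi_R(\mathcal L^k\otimes\mathcal O_E)$ a numerical polynomial in $k$ of degree $\le d-1$ with leading coefficient $\frac{1}{(d-1)!}I(\mathcal L^{d-1};\mathcal O_E)$. By additivity of the Snapper number in the sheaf and the base change \cite[Lemma B.12]{Kl2} from $R/I$ to $\kk$ (exactly as in the proof of \autoref{LocIntRmk}), this equals $\frac{1}{(d-1)!}\sum_i c_i\,I((\mathcal L|_{E_i})^{d-1};\mathcal O_{E_i})_\kk$ with $c_i=\ell_{\mathcal O_{Y,E_i}}(\mathcal O_{Y,E_i}/I\mathcal O_{Y,E_i})$. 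Taking $F_1=\cdots=F_d=-E$ in \autoref{equiv} yields $m_i=-c_i$, hence $(\mathcal L^d)_R=-\sum_i c_i\,I((\mathcal L|_{E_i})^{d-1};\mathcal O_{E_i})_\kk$. Summing over $k$ then gives $\chi_R(\mathcal O_Y/I^n\mathcal O_Y)=\frac{-(\mathcal L^d)_R}{d!}n^d+O(n^{d-1})$, and with the displayed comparison this produces $e(I)=-(\mathcal L^d)_R$.

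For the comparison estimate I would set $R'=\pi_*\mathcal O_Y=H^0(Y,\mathcal O_Y)$; since $\pi$ is an isomorphism over $\Spec(R)\setminus\{\fm_R\}$ one has $R\subseteq R'\subseteq\operatorname{Frac}(R)$ with $\ell_R(R'/R)<\infty$ and $R^i\pi_*\mathcal O_Y$ of finite length for $i\ge1$. Relative Serre vanishing for the ample $\mathcal L$ gives $R^i\pi_*(I^n\mathcal O_Y)=0$ for $i\ge1$ and $n\gg0$, so the long exact sequence of $0\to I^n\mathcal O_Y\to\mathcal O_Y\to\mathcal O_Y/I^n\mathcal O_Y\to0$ shows $\chi_R(\mathcal O_Y/I^n\mathcal O_Y)=\ell_R(R'/\widetilde{I^n})+C$ for a constant $C$ and all $n\gg0$, where $\widetilde{I^n}:=\pi_*(I^n\mathcal O_Y)$. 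Comparing with $\ell_R(R/I^n)=\ell_R(R'/\widetilde{I^n})+\ell_R(\widetilde{I^n}/I^n)-\ell_R(R'/R)$ reduces the displayed identity to the bound $\ell_R(\widetilde{I^n}/I^n)=O(n^{d-1})$. This in turn holds because $\bigoplus_n\widetilde{I^n}=\bigoplus_n H^0(Y,\mathcal L^n)$ is a finitely generated $R$-algebra (the section ring of a relatively ample sheaf) that is integral over the Rees algebra $\mathcal R(I)=\bigoplus_n I^n$, since $\widetilde{I^n}\subseteq\overline{I^n}$; hence it is a finite $\mathcal R(I)$-module, so $\bigoplus_n\widetilde{I^n}/I^n$ is a finite graded $\mathcal R(I)$-module supported over $\fm_R$, of dimension at most the analytic spread of $I$ and thus at most $d$.

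I expect the reductions and the Snapper/\autoref{equiv} bookkeeping for the leading coefficient to be safe and essentially formal. The hard part will be the comparison estimate of the last paragraph, namely controlling the discrepancy $\ell_R(\widetilde{I^n}/I^n)$ between $I^n$ and the global sections $H^0(Y,I^n\mathcal O_Y)$; this is where the finiteness of the section ring over the Rees algebra and the Hilbert-polynomial degree bound do the real work, and it is the step that must be treated carefully in the non-normal setting, where $\widetilde{I^n}$ need not coincide with $I^n$.
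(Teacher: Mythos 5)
Your proposal is correct in substance but follows a genuinely different route from the paper. The paper never works with $\ell_R(R/I^n)$ or with Euler characteristics on $Y$ itself: it passes immediately to the exceptional divisor $Z=\Proj(\oplus_{n\gs 0}I^n/I^{n+1})$, which is a projective scheme over the Artinian ring $\Lambda=R/I$, identifies $\ell_R(I^n/I^{n+1})$ with $H^0(Z,\mathcal O_Z(n))=\chi_\Lambda(\mathcal O_Z(n))$ exactly for $n\gg 0$ (via \cite[Exercise II.5.9]{H} and ampleness of $\mathcal O_Z(1)$), reads off $e(I)$ as the leading Snapper coefficient $I((\mathcal L_Y\otimes\mathcal O_Z)^{d-1})_\Lambda$, and converts this to $-(\mathcal L_Y^d)_R$ by \autoref{LocIntRmk}. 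This sidesteps entirely the two things you work hardest on: there is no reduction to the domain case (so the classical associativity formula is not needed as an input --- the paper in fact derives it as a corollary), and there is no discrepancy between $I^n$ and $H^0(Y,I^n\mathcal O_Y)$ to control, because all cohomology is computed on a scheme proper over an Artinian base. Your argument is the classical Ramanujam/Lazarsfeld-style one via $\chi_R(\mathcal O_Y/I^n\mathcal O_Y)$, filtration by powers of $\mathcal L$, relative Serre vanishing, and the finiteness of $\oplus_n H^0(Y,\mathcal L^n)$ over the Rees algebra; it buys a more explicit picture of the roles of $\pi_*\mathcal O_Y$ and $R^i\pi_*\mathcal O_Y$, at the cost of extra reductions. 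Your bookkeeping matching the Snapper leading coefficient with $(\mathcal L^d)_R$ via \autoref{equiv} and \autoref{LocIntRmk} is the same as the paper's and is correct (including the sign).

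One step needs repair, though its conclusion is true. You justify finiteness of $\oplus_n\widetilde{I^n}$ over $\mathcal R(I)$ by the containment $\widetilde{I^n}\subseteq\overline{I^n}$; but $\overline{I^n}$ is an ideal of $R$, while a priori $\widetilde{I^n}=H^0(Y,I^n\mathcal O_Y)$ only lives in $R'=\pi_*\mathcal O_Y$, which may be strictly larger than $R$, so the containment is not immediate. The clean fix is the saturation argument: embedding $Y$ in $\PP^N_R$ via generators of $I$, the module $\oplus_n\Gamma(Y,\mathcal O_Y(n))$ agrees with the homogeneous coordinate ring $\oplus_n I^n$ in all sufficiently large degrees (Serre vanishing for the ideal sheaf of $Y$ in $\PP^N_R$), so in fact $\widetilde{I^n}=I^n$ for $n\gg 0$ and your discrepancy term eventually vanishes; a posteriori this also yields $\widetilde{I^n}\subseteq\overline{I^n}$ by the determinant trick. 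With that substitution your proof is complete.
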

\begin{proof} 
$Y=\Proj(\oplus_{n\gs 0}I^n)$ has the tautological very ample line bundle $\mathcal O_Y(1)=I\mathcal O_Y=\mathcal L_Y$.
Let $Z=\Proj(\oplus_{n\gs 0} I^n/I^{n+1})$. Then $Z$ is an effective  Cartier divisor on $Y$ with ideal sheaf $\mathcal O_Y(-Z)=I\mathcal O_Y=\mathcal L_Y$ and tautological line bundle $\mathcal O_Z(1)=\mathcal L_Y\otimes_{\mathcal O_Y}\mathcal O_Z$.
Moreover, $Z$ is a projective scheme over $\Lambda=R/I$.
For $n\gg0$ we have  $H^0(Z,\mathcal O_Z(n))=I^n/I^{n+1}$. 
This follows by the comment in the last line of the proof of  \cite[Theorem II.5.19]{H} which proves  \cite[Exercise II.5.9]{H}.  
We have that 
$$
\chi_{\Lambda}\big((\mathcal L_Y\otimes\mathcal O_Z)^n\big)_{\Lambda}
=\frac{I\big(({{\mathcal L}_Y}\otimes\mathcal O_Z\big)^{d-1})_{\Lambda}}{(d-1)!}n^{d-1}+Q(n),
$$
where $\chi_{\Lambda}$ is the euler characteristic over $\Lambda$ and $Q(n)$ is a polynomial of degree less than $d-1$ by \cite[Theorem 19.16]{C0},
which is valid over our Artinian local ring $\Lambda$, after substituting the reference to \cite[Theorem B.7]{Kl2} for \cite[Theorem 19.1]{C0}.
Since ${\mathcal L}_Y\otimes\mathcal O_Z$ is ample on $Z$ and using the above calculations, for $n\gg 0$ we have
\begin{align*}
\ell_R(I^n/I^{n+1})=\ell_R\big(H^0(Z,({\mathcal L}_Y\otimes\mathcal O_Z)^n)\big)
&=\chi_{\Lambda}\big(({\mathcal L}_Y\otimes\mathcal O_Z)^n\big)\\
&=\frac{I(({\mathcal L}_Y\otimes \mathcal O_Z)^{d-1})_{\Lambda}}{(d-1)!}n^{d-1}
+Q(n).
\end{align*}
Thus 
\begin{align*}
e(I)=\lim_{n\rightarrow\infty}\frac{\ell_R(I^n/I^{n+1})(d-1)!}{n^{d-1}}
&=I\big(({\mathcal L}_Y\otimes\mathcal O_Z)^{d-1}\big)_{\Lambda}\\
&=-(\mathcal L_Y^d)_R
\end{align*}
by  \autoref{LocIntRmk}, proving the first statement in the result.

For the second statement, we observe that since $I\mathcal O_W$ is invertible, there is a proper birational morphism $\lambda:W\rightarrow Y$ which factors $\alpha$, such that
 $\lambda^*(I\mathcal O_Y)\cong I\mathcal O_W$. Thus 
$$
e(I)=-((I\mathcal O_Y)^d)_R=-((I\mathcal O_W)^d)_R
$$
by the first part of this proof and \autoref{ThmR1}.
\end{proof}

As a corollary, we give a new proof of the associativity formula for multiplicities. A classical proof of this theorem can be found in  \cite[Theorem 11.2.4]{HS}.

\begin{corollary}[Associativity Formula] 
Let $(R,\fm_R,\kk)$ be a local ring  and $I\subset R$ be an $\m_R$-primary ideal.
Let $\Theta$ be the set of minimal prime ideals $P$ of $R$ such that $\dim(R/P)=\dim(R)$. Then
$$
e(I)=\sum_{P\in \Theta}\ell_{R_P}(R_{P})e(I(R/P)).
$$
\end{corollary}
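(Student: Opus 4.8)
The plan is to derive the associativity formula directly from \autoref{TheoremMultInt} together with the summation formula \autoref{AssocForm}. The key observation is that the Hilbert-Samuel multiplicity $e(I)$ has now been expressed as a single intersection product over $R$, and that \autoref{AssocForm} decomposes any such product over $R$ into a weighted sum of intersection products over the quotient domains $R/P$ for the top-dimensional minimal primes $P$. Combining these two facts should give the result almost immediately.

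\medskip

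First I would choose the blowup $\pi:Y=\Proj(\oplus_{n\gs 0}I^n)\rightarrow\Spec(R)$ of $I$, so that $\mathcal{L}_Y=I\mathcal{O}_Y$ is invertible and \autoref{TheoremMultInt} gives $e(I)=-(\mathcal{L}_Y^d)_R$. I would write this as a product of $d$ Cartier divisors by taking each $F_i$ to be a Cartier divisor with $\mathcal{O}_Y(F_i)\cong\mathcal{L}_Y$; since $\mathcal{L}_Y=\mathcal{O}_Y(-Z)$ where $Z$ is the exceptional divisor, these $F_i$ are supported above $\fm_R$, so the hypotheses of \autoref{AssocForm} are met. Next I would apply \autoref{AssocForm} with the $P_i$ ranging over $\Theta$, obtaining
$$
(\mathcal{L}_Y^d)_R=\sum_{P\in\Theta}\ell_{R_P}(R_P)(i_{Y_P}^*\mathcal{L}_Y^d)_{R/P},
$$
where $Y_P$ is the integral component of $Y$ dominating $R/P$.

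\medskip

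The remaining point is to identify each term $-(i_{Y_P}^*\mathcal{L}_Y^d)_{R/P}$ with $e(I(R/P))$. For this I would note that $Y_P$, with the reduced structure, is birational and proper over $\Spec(R/P)$, and that the pullback $i_{Y_P}^*\mathcal{L}_Y$ is precisely $I(R/P)\mathcal{O}_{Y_P}=I\mathcal{O}_{Y_P}$, which is invertible on $Y_P$. Hence $Y_P$ (or rather the morphism $Y_P\to\Spec(R/P)$) is a proper birational morphism over the $\dim(R/P)=d$ dimensional local ring $R/P$ on which $I(R/P)$ becomes invertible. Applying the second, more general, statement of \autoref{TheoremMultInt} over the ring $R/P$ then yields
$$
-(i_{Y_P}^*\mathcal{L}_Y^d)_{R/P}=e\big(I(R/P)\big).
$$
Substituting back gives the claimed formula.

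\medskip

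I expect the main obstacle to be the bookkeeping in the previous paragraph: one must verify carefully that the intersection product $(i_{Y_P}^*\mathcal{L}_Y^d)_{R/P}$ computed over $R/P$ (a domain of dimension $d$) genuinely coincides with the multiplicity intersection product that \autoref{TheoremMultInt} assigns to $e(I(R/P))$, and in particular that $Y_P\to\Spec(R/P)$ is birational and that $I\mathcal{O}_{Y_P}$ is the invertible sheaf to which the theorem applies. This requires checking that $Y_P$ dominates $\Spec(R/P)$ birationally and that restriction of Cartier divisors commutes with the pullback of $I\mathcal{O}_Y$, but both follow from the construction of $Y$ as a blowup and from the definition of the relative intersection product; no genuinely new idea beyond \autoref{TheoremMultInt} and \autoref{AssocForm} is needed.
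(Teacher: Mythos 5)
Your proposal is correct and follows essentially the same route as the paper: express $e(I)$ as $-((I\mathcal O_Y)^d)_R$ via \autoref{TheoremMultInt}, decompose via \autoref{AssocForm}, and identify each term $-((I\mathcal O_{Y_P})^d)_{R/P}$ with $e(I(R/P))$ by applying \autoref{TheoremMultInt} again over $R/P$. The only cosmetic difference is that the paper invokes \cite[Corollary II.7.15]{H} to recognize $Y_P\to\Spec(R/P)$ as the blowup of $I(R/P)$ and uses the first statement of \autoref{TheoremMultInt}, whereas you use the second, more general statement; both are valid.
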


\begin{proof} Let $\pi:Y\rightarrow\Spec(R)$ be the blowup of $I$. For $P\in \Theta$, let $Y_P$ be the integral component of $Y$ which dominates $\Spec(R/P)$. Then
$Y_P\rightarrow\Spec(R/P)$ is the blowup of $I(R/P)$ by \cite[Corollary II.7.15]{H}, so that
$$
e(I)=-((I\mathcal O_Y)^d)_R\quad\mbox{and}\quad
e(I(R/P))=-((I\mathcal O_{Y_P})^d)_{R/P}
$$
by  \autoref{TheoremMultInt}. The result now follows from  \autoref{AssocForm}.
\end{proof}

By combining \autoref{Theoremnormal*} and  \autoref{TheoremMultInt}, we obtain a proof of the following result; a classical proof  of it can be found in \cite[Corollary VIII.10.1]{ZS2}.

\begin{corollary}\label{Cornormal*} Let $(R,\fm_R,\kk)$ be a  local domain  and $I\subset R$ be an $\m_R$-primary ideal. 
Let $T$ be a domain that is a finite extension of $R$ and let $\m_1,\ldots,\m_r$ be its maximal ideals. 
Set $T_i=T_{\m_i}$ for every $i$. 
Let $Y\rightarrow \Spec(R)$ be the blowup of an ideal $K$ of $R$ such that $I\mathcal O_Y$ is invertible  and  $W$ be the blowup of $KT$, so that $I\mathcal O_W$ is invertible.  
Let 
$W_i=W\times_{\Spec(T)}\Spec(T_i)$. Then 
\begin{equation*}\label{eqI3}
[T:R]e(I)=\sum_{i=1}^r[T/\m_{i}:R/\m_R]e(IT_i),
\end{equation*}
where $[T:R]$ is the degree of the extension of quotient fields of $T$ and $R$.
\end{corollary}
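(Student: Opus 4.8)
The plan is to obtain the statement as a direct combination of \autoref{TheoremMultInt} and \autoref{Theoremnormal*}, applied with all the Cartier divisors taken equal to the one attached to $\mathcal L_Y=I\mathcal O_Y$. Since $I$ is $\fm_R$-primary and $R$ is a domain, $I\mathcal O_Y$ is a nonzero invertible sheaf of ideals, so $I\mathcal O_Y=\mathcal O_Y(-D)$ for an effective Cartier divisor $D$ whose support is $V(I\mathcal O_Y)\subseteq\pi^{-1}(\fm_R)$ and hence lies above $\fm_R$. Setting $F_1=\cdots=F_d=-D$, each $F_i$ has support above $\fm_R$ and $\mathcal O_Y(F_i)\cong\mathcal L_Y$. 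The general form of \autoref{TheoremMultInt} applies, since $Y\to\Spec(R)$ is proper and birational with $I\mathcal O_Y$ invertible, and gives $(F_1\cdot\ldots\cdot F_d)_R=(\mathcal L_Y^d)_R=-e(I)$.

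Next I would identify the pulled-back divisors on the $W_i$. Because $\mathcal L_Y$ is invertible and $W$ is integral, the natural map $\alpha^*(I\mathcal O_Y)\to\mathcal O_W$ is injective with image $I\mathcal O_W$, so $\mathcal O_W(\alpha^*F_i)\cong I\mathcal O_W$; restricting to $W_i=W\times_{\Spec(T)}\Spec(T_i)$ yields $\mathcal O_{W_i}(\alpha^*F_i|_{W_i})\cong I\mathcal O_{W_i}$. To invoke \autoref{TheoremMultInt} over each $T_i$, I must check that $IT_i$ is $\fm_iT_i$-primary, that $\dim(T_i)=d$, and that $W_i\to\Spec(T_i)$ is proper and birational with $I\mathcal O_{W_i}$ invertible. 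The first two hold because $R\to T$ is module-finite and integral, so every maximal ideal of $T$ contracts to $\fm_R$, whence $V(IT)=V(\fm_RT)=\{\fm_1,\ldots,\fm_r\}$ and $\dim(T_i)=\dim(R)=d$. The last follows by base change from $W\to\Spec(T)$, using that $W$ is the blowup of $KT$ and $\alpha^*(I\mathcal O_Y)=I\mathcal O_W$ is invertible. Hence $(\alpha^*F_1\cdot\ldots\cdot\alpha^*F_d)_{T_i}=((I\mathcal O_{W_i})^d)_{T_i}=-e(IT_i)$.

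Finally I would substitute these two evaluations into the relation of \autoref{Theoremnormal*} for the chosen divisors,
\[
[T:R]\,(F_1\cdot\ldots\cdot F_d)_R=\sum_{i=1}^r[T/\fm_i:R/\fm_R]\,(\alpha^*F_1\cdot\ldots\cdot\alpha^*F_d)_{T_i},
\]
which becomes $-[T:R]\,e(I)=-\sum_{i=1}^r[T/\fm_i:R/\fm_R]\,e(IT_i)$; cancelling the common sign gives the asserted identity.

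I do not expect a genuine obstacle, since the substantive work is already carried out in \autoref{Theoremnormal*} and \autoref{TheoremMultInt}, and the corollary is essentially their assembly. The only points needing care are the sheaf identifications $\mathcal O_W(\alpha^*F_i)\cong I\mathcal O_W$ together with its restriction to $W_i$, and the verification that the hypotheses of \autoref{TheoremMultInt} hold over each $T_i$---in particular that $IT_i$ is $\fm_i$-primary of the same dimension $d$, so that the relevant top intersection number is exactly the one computing $e(IT_i)$.
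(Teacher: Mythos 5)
Your proof is correct and follows exactly the route the paper takes: the paper's entire proof of this corollary is the single sentence that it follows ``by combining \autoref{Theoremnormal*} and \autoref{TheoremMultInt},'' and your write-up simply supplies the routine verifications (choice of the Cartier divisor with sheaf $I\mathcal O_Y$, identification of its pullback on the $W_i$, and the hypotheses of \autoref{TheoremMultInt} over each $T_i$) that the authors leave implicit. No gaps.
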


\section{Degree functions}\label{SecDeg}

In this section we apply the intersection product developed in \autoref{SecIntProd} to the degree functions studied by Rees in \cite{Re}.

\subsection{Theorems of Rees on degree functions}

The following theorem is proven in  \cite[\S 2]{Re}. In the statement, there is a typo stating that $R$ is a local ring, when to be true it must be assumed that $R$ is a domain (it is a standing assumption in \cite[\S 2]{Re} that $R$ be an analytically unramified local domain). We have made this correction in the following statement. The necessity of the assumption that $R$ be a domain for the theorem to be true follows from the examples at the end of this subsection. 
The Rees valuations of an ideal are defined in \cite{R5} and in \cite[Ch. 10]{HS}.

\begin{theorem}[{\cite[Theorem 2.3]{Re}, \cite[Theorem 9.31]{Re2}}]\label{ReesThm*} Let $(R,\fm_R,\kk)$ be an analytically unramified  local  domain   and $I\subset R$ be an $\fm_R$-primary ideal. Then the degree function 
$d_I: R\setminus\{0\}\to \NN$ given by 
$d_I(x)=e(I(R/xR))$ for $0\ne x$ can be expressed as
\begin{equation}\label{eq:Rees_Thm}
d_I(x)=\sum_{i=1}^rd_i(I)v_i(x),
\end{equation}
where $v_1,\ldots, v_r$ are the Rees valuations of $I$ and $d_i(I)\in \NN$ for all $i$.
\end{theorem}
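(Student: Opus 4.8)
The plan is to prove \autoref{ReesThm*} by reducing it to the intersection-theoretic statement of \autoref{Theorem2'} (i.e. \autoref{Theorem2}) together with its \autoref{CorThm2'}, and the key work is to set up the right birational model. First I would take $Y=\Proj(\oplus_{n\gs 0}\overline{I^n})$, the normalized blowup of $I$, with the natural projective morphism $\pi:Y\to\Spec(R)$; since $R$ is an analytically unramified local domain, this algebra is a finitely generated $R$-module over the ordinary Rees algebra, so $Y$ is of finite type and projective over $\Spec(R)$, and $\mathcal L_Y=I\mathcal O_Y$ is invertible. The integral components $E$ of the exceptional fiber $\pi^{-1}(\fm_R)$ have the crucial property that the local rings $\mathcal O_{Y,E}$ are one-dimensional and normal, hence discrete valuation rings, and the associated valuations $v_E$ are precisely the Rees valuations $v_1,\dots,v_r$ of $I$; this is exactly what the theory of \cite{R5} and \cite[\S 10]{HS} establishes.

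Next I would fix $0\ne x\in R$ and form the strict transform $T^*\subset Y$ of $T=\Spec(R/xR)$. The heart of the argument is the divisor-theoretic identity that on $Y$ the pullback of the principal Cartier divisor $\divv(x)$ decomposes as
\begin{equation*}
\divv(x\mathcal O_Y)=T^*+\sum_E v_E(x)\,E,
\end{equation*}
since along each exceptional $E$ the order of vanishing of $x$ is exactly $v_E(x)$ by definition of the valuation $v_E$ attached to the DVR $\mathcal O_{Y,E}$, while along $T^*$ the coefficient is $1$. Intersecting both sides with the $(d-1)$ Cartier divisors $F_1,\dots,F_{d-1}$ (all supported above $\fm_R$) and using that $x\mathcal O_Y$ is a principal divisor, the left-hand product vanishes in the appropriate rational equivalence group — because intersecting with a globally principal divisor gives zero in $A_*(-/S)$ — which yields precisely the asserted formula
$$
-(F_1\cdot\ldots\cdot F_{d-1}\cdot T^*)_R=\sum_E v_E(x)(F_1\cdot\ldots\cdot F_{d-1}\cdot E)_R.
$$
Specializing $F_1=\cdots=F_{d-1}=\mathcal L_Y$ and invoking \autoref{TheoremMultInt} to identify $-(\mathcal L_Y^{d-1}\cdot T^*)_R$ with $e(I(R/xR))$ gives \autoref{CorThm2'}.

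Finally, to obtain \autoref{ReesThm*} itself I would set $d_i(I)=(\mathcal L_Y^{d-1}\cdot E_i)_R$ where $v_i=v_{E_i}$, so that \autoref{eq:Rees_Thm} reads $d_I(x)=\sum_i d_i(I)v_i(x)$; the nonnegativity $d_i(I)\in\NN$ follows because these are intersection numbers of the ample sheaf $\mathcal L_Y$ restricted to the projective $\kk$-varieties $E_i$, computed via the Snapper--Mumford--Kleiman product of \autoref{equiv}, and degrees of ample classes are nonnegative. The main obstacle I anticipate is the careful justification that $-(\mathcal L_Y^{d-1}\cdot T^*)_R$ equals the Hilbert--Samuel multiplicity $e(I(R/xR))$: this requires identifying $T^*$ with the (normalized) blowup of $R/xR$ along the image of $I$, checking that analytic unramifiedness passes to $R/xR$ so that \autoref{TheoremMultInt} applies there, and tracking the length contributions along $T^*$. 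The divisor decomposition of $\divv(x\mathcal O_Y)$ and the vanishing of the principal intersection product are conceptually clean, but verifying the strict-transform multiplicity identification — together with the subtlety that $Y$ may fail to be universally catenary so that relative dimension rather than ordinary dimension must be used throughout — is where the real care is needed.
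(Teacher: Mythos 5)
Your proposal follows essentially the same route as the paper: the normalized blowup $Y=\Proj(\oplus_{n\gs 0}\overline{I^n})$, the identification of the Rees valuations with the components of $\pi^{-1}(\fm_R)$, the decomposition $[\divv(x)]=[T^*]+\sum_E v_E(x)[E]$ vanishing in $A_{-1}(Y/S)$, the specialization $F_i=\mathcal L_Y$, and the identification $e(I(R/xR))=-(\mathcal L_Y^{d-1}\cdot T^*)_R$ via the Ramanujam-type formula. The one worry you flag that is unnecessary is passing analytic unramifiedness to $R/xR$: the multiplicity formula of \autoref{TheoremMultInt} holds for arbitrary Noetherian local rings, so one only needs that $T^*\to T$ is proper birational with $I\mathcal O_{T^*}$ invertible, which is automatic.
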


The proof of  \cite[Theorem 2.3]{Re} proves \autoref{ReesThm*}. In \cite[Theorem 9.31]{Re2}, we have the additional assumption that $R$ is formally equidimensional, giving the stronger conclusion that all $d_i(I)$ are positive in this case. 

The following  \autoref{ReesLemma} and  \autoref{genReesThm*} appear in \cite[\S 3]{Re}. There are also typos in the original statements in \cite{Re}. For  \autoref{ReesLemma} and  \autoref{genReesThm*} to be true, we must have the additional assumption that $x\in R$ is not a zero divisor. We have made this correction in the following statements. The necessity of the assumption that $x$ be a nonzero divisor for the lemma and theorem to be true follows from the examples at the end of this  subsection.

\begin{lemma}[{\cite[Lemma 3.1]{Re}}]\label{ReesLemma}
Let $(R,\fm_R,\kk)$ be a $d$-dimensional  local  ring   and $I\subset R$ be an $\fm_R$-primary ideal.
Suppose that $P_1,\ldots, P_s$ are the minimal prime ideals of $R$ such that $\dim(R/P_i)=d$.  Then  the degree function 
$d_I: R\setminus\{0\}\to \ZZ_{>0}$ given by $d_I(x)=e(I(R/xR))$ for $x\in R$ such that $\dim(R/xR)=d-1$ and $x$ is a nonzero divisor on $R$ can be expressed as
\begin{equation}\label{eqC2}
d_I(x)=\sum_{i=1}^s\ell_{R_{P_i}}(R_{P_i})d_{I(R/P_i)}(x).
\end{equation}
\end{lemma}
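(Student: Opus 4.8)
The plan is to deduce the identity from the associativity formula for Hilbert--Samuel multiplicities \cite[Theorem 11.2.4]{HS} (reproved above as a corollary of \autoref{TheoremMultInt}), applied to several quotient rings of $R$, the whole argument reducing to a length computation in the one--dimensional localizations of $R$. First I would record the dimension facts: since $x$ is a nonzero divisor it lies in no minimal prime of $R$, so $\dim(R/xR)=d-1$ and, for each $i$, the image of $x$ in the domain $R/P_i$ is nonzero, so that $d_{I(R/P_i)}(x)=e\big(I(R/(P_i+xR))\big)$ is defined. Applying the associativity formula to the $\m_R$--primary ideal $I(R/xR)$ in the local ring $R/xR$ gives
\[
e(I(R/xR))=\sum_{\fp}\ell_{R_{\fp}}\big((R/xR)_{\fp}\big)\,e(I(R/\fp)),
\]
where $\fp$ runs over the primes minimal over $(x)$ with $\dim(R/\fp)=d-1$; these are exactly the maximal--dimensional minimal primes of $R/xR$, and each has $\height(\fp)=1$ by Krull's principal ideal theorem.

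The heart of the argument is a local length identity at each such $\fp$. Set $B=R_{\fp}$, a one--dimensional local ring in which $x$ is a nonzero divisor, so $\ell_B(B/xB)=\ell_{R_{\fp}}((R/xR)_{\fp})<\infty$ and $e(xB)=\ell_B(B/xB)$. I claim the minimal primes of $B$ are precisely the $P_iB$ with $P_i\subseteq\fp$, each satisfying $\dim(B/P_iB)=1$. Indeed, for any minimal prime $P\subseteq\fp$ of $R$, concatenating a chain of primes from $P$ to $\fp$ with one from $\fp$ to $\m_R$ shows $\dim(R/P)\ge\dim(R/\fp)+\height(\fp/P)\ge(d-1)+1=d$, forcing $\dim(R/P)=d$, i.e. $P$ is one of the $P_i$; moreover $\fp$ is minimal over $P+(x)$, whence $\height(\fp/P)=1$ and $\dim(B/PB)=1$. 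Thus $B$ is equidimensional of dimension one with the asserted minimal primes, and a second application of the associativity formula, now to the parameter $x$ in $B$, yields
\[
\ell_{R_{\fp}}\big((R/xR)_{\fp}\big)=\sum_{i:\,P_i\subseteq\fp}\ell_{R_{P_i}}(R_{P_i})\,\ell_{R_{\fp}}\big((R/(P_i+xR))_{\fp}\big).
\]
This local step is the main obstacle: it is exactly where the possible failure of $R$ to be catenary or equidimensional must be controlled, and the point is that the displayed dimension inequality rules out any contribution from a minimal prime $P$ with $\dim(R/P)<d$, with no catenary hypothesis needed.

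It then remains to reassemble. Substituting the local identity into the first display and interchanging the two summations gives
\[
e(I(R/xR))=\sum_{i=1}^{s}\ell_{R_{P_i}}(R_{P_i})\sum_{\fp\supseteq P_i}\ell_{R_{\fp}}\big((R/(P_i+xR))_{\fp}\big)\,e(I(R/\fp)).
\]
For fixed $i$ the inner sum ranges over the primes $\fp$ minimal over $(x)$ with $P_i\subseteq\fp$ and $\dim(R/\fp)=d-1$, which are precisely the maximal--dimensional minimal primes of $R/(P_i+xR)$; hence by the associativity formula applied to $I(R/P_i)$ acting on $(R/P_i)/x(R/P_i)=R/(P_i+xR)$, the inner sum equals $e\big(I(R/(P_i+xR))\big)=d_{I(R/P_i)}(x)$, which is the asserted formula. (When $\dim(R/(P_i+xR))<d-1$ the inner sum is empty and $d_{I(R/P_i)}(x)=0$, so the formula persists.)

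Alternatively, one can remain inside the intersection--theoretic framework: with $Y$ the blowup of $I$ and $D_x=\divv(x\mathcal O_Y)$, a relative form of \autoref{TheoremMultInt} gives $e(I(R/xR))=-(\mathcal L_Y^{d-1}\cdot D_x)_R$, and since $\mathcal L_Y=\mathcal O_Y(-Z)$ is supported above $\m_R$ one may apply \autoref{AssocForm} to $\mathcal L_Y^{d-1}\cdot D_x$ and identify each summand over $R/P_i$ with $-d_{I(R/P_i)}(x)$. I expect the purely algebraic route above to be the shorter one.
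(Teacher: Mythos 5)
Your proof is correct. Note that the paper does not reproduce an argument for this lemma: it simply records that the proof of \cite[Lemma 3.1]{Re} goes through, pointing out that the nonzero-divisor hypothesis is what guarantees (i) that the maximal-dimensional minimal primes of $R/xR$ only meet minimal primes of $R$ of dimension $d$, and (ii) that Northcott's one-dimensional length formula \cite[Theorem 4]{Nor} applies at each such prime. Your argument reconstructs exactly this strategy in a self-contained way: your chain-concatenation inequality $\dim(R/P)\gs\dim(R/\fp)+\height(\fp/P)$ is precisely point (i), and your local identity
$\ell_{B}(B/xB)=\sum_{i:\,P_i\subseteq\fp}\ell_{R_{P_i}}(R_{P_i})\,\ell_{R_{\fp}}\bigl((R/(P_i+xR))_{\fp}\bigr)$
is Northcott's formula, which you rederive from the associativity formula (legitimately, since $B=R_\fp$ and each $B/P_iB$ are one-dimensional with $x$ a nonzero divisor, hence Cohen--Macaulay, so $e(xB)=\ell_B(B/xB)$). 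The reassembly step is also sound: for fixed $i$ the set of primes minimal over $(x)$ containing $P_i$ with $\dim(R/\fp)=d-1$ coincides with the set of maximal-dimensional minimal primes of $R/(P_i+xR)$ --- your one-line dimension argument rules out any strictly smaller prime over $(x)$ inside such a $\fp$ --- so the inner sum is again an instance of associativity. The only cosmetic remark is that your final parenthetical is vacuous: since $x\notin P_i$ and $x$ is a non-unit, $\dim(R/(P_i+xR))=d-1$ always holds, so the degenerate case you guard against cannot occur.
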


The proof of \cite[Lemma 3.1]{Re} proves  \autoref{ReesLemma}. The assumption that $x$ is a nonzero divisor is necessary to  conclude that the minimal primes $P$ of $R/xR$ of dimension $d-1$ contain some minimal prime  of $R$ of dimension $d$, and to  show that  the assumption of \cite[Theorem 4]{Nor} that $x$ is a nonzero divisor on $R_P$ is satisfied.

The following theorem depends on  \autoref{ReesLemma}.

\begin{theorem}
[{\cite[\S 3]{Re}}]
\label{genReesThm*}
Let $(R,\fm_R,\kk)$ be a $d$-dimensional  local  ring   and $I\subset R$ be an $\fm_R$-primary ideal. Then the degree function $d_I: R\setminus\{0\}\to \NN$ given by $d_I(x)=e(I(R/xR))$   for  $x\in R$ such that $\dim (R/xR)=d-1$
 and $x$ is a nonzero divisor on $R$ can be expressed as
\begin{equation}\label{eq:Rees_Thm}
d_I(x)=\sum_{i=1}^rd_i(I)v_i(x),
\end{equation}
where $v_1,\ldots, v_r$ are the Rees valuations of $I$ and $d_i(I)\in \NN$ for all $i$. 
\end{theorem}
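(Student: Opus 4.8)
The plan is to reduce \autoref{genReesThm*} to the analytically unramified domain case \autoref{ReesThm*} by means of the additive reduction \autoref{ReesLemma}, and then to repackage the resulting sum as a single sum indexed by the Rees valuations of $I$. Both $d_I$ and the valuation expansion are assembled from the top-dimensional layers of $\Spec(R)$, so once the domain statement is in hand the only genuine work is to match the Rees valuations of $I$ in $R$ with those of the ideals $I(R/P_i)$ in the quotient domains $R/P_i$.

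First I would fix a nonzero divisor $x$ with $\dim(R/xR)=d-1$, and let $P_1,\dots,P_s$ be the minimal primes of $R$ with $\dim(R/P_i)=d$. Then \autoref{ReesLemma} gives
\[
d_I(x) = \sum_{i=1}^s \ell_{R_{P_i}}(R_{P_i})\, d_{I(R/P_i)}(x),
\]
where $x$ is read as its image $\overline{x}$ in each $R/P_i$. Since a nonzero divisor avoids every minimal prime, $\overline{x}\neq 0$, and Krull's principal ideal theorem gives $\dim\big((R/P_i)/\overline{x}(R/P_i)\big)=d-1$; thus $\overline{x}$ satisfies the hypotheses of \autoref{ReesThm*} in each $R/P_i$. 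Applying that theorem yields
\[
d_{I(R/P_i)}(\overline{x}) = \sum_{j} d_{ij}\, w_{ij}(\overline{x}), \qquad d_{ij}\in\NN,
\]
with the $w_{ij}$ the Rees valuations of $I(R/P_i)$.

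To recombine these expansions, I would invoke the behaviour of Rees valuations under quotients by minimal primes (\cite{R5}, \cite[\S 10]{HS}): the Rees valuations of $I$ in $R$ are obtained, over the minimal primes $P$ of $R$, as the composites of $R\twoheadrightarrow R/P$ with the Rees valuations of $I(R/P)$; such a valuation $v$ has $P_v=P$ and satisfies $v(x)=w(\overline{x})$, which is finite whenever $x\notin P$. Substituting and collecting terms gives
\[
d_I(x) = \sum_{i=1}^s\sum_j \ell_{R_{P_i}}(R_{P_i})\, d_{ij}\, w_{ij}(x).
\]
Relabelling the valuations $w_{ij}$ coming from the top-dimensional $P_i$ as $v_1,\dots,v_m$, with coefficients $d_k(I)=\ell_{R_{P_i}}(R_{P_i})\, d_{ij}\in\NN$, and assigning coefficient $0$ to the remaining Rees valuations of $I$ (those centered on minimal primes $P$ with $\dim(R/P)<d$, which do not occur in \autoref{ReesLemma}), produces the asserted expansion $d_I(x)=\sum_{k=1}^r d_k(I)\,v_k(x)$ over all Rees valuations of $I$, with every $d_k(I)\in\NN$.

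I expect the crux to be twofold. First, the valuation bookkeeping — the bijection between the Rees valuations of $I$ in $R$ and the Rees valuations of the $I(R/P_i)$, and the compatibility $v(x)=w(\overline{x})$ for nonzero divisors — must be justified from the theory of Rees valuations. Second, \autoref{ReesThm*} applies to $R/P_i$ only when that domain is analytically unramified; to reach a fully general local ring one first passes to the completion $\hat R$, which preserves multiplicities and hence $d_I$, and whose top-dimensional quotient domains $\hat R/Q$ are complete and therefore analytically unramified — at the cost of afterwards comparing the Rees valuations of $I$ with those of $I\hat R$, again via \cite[\S 10]{HS}. The additivity supplied by \autoref{ReesLemma} renders everything else routine reindexing.
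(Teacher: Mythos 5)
Your proposal is correct and follows essentially the same route as the paper: reduce via \autoref{ReesLemma} to the top-dimensional quotient domains, apply \autoref{ReesThm*} there, and reassemble the sum using the compatibility of Rees valuations with passage to $R/P_i$ and to $\hat R$ (which the paper isolates as \autoref{LemmaR1} and \autoref{LemmaR2}, proving the irredundancy statements you delegate to \cite{R5} and \cite[\S 10]{HS}). The only adjustment is the one you already flag in your closing paragraph: since $R/P_i$ need not be analytically unramified, the completion step must come \emph{first} --- using $e(I(R/xR))=e(I(\hat R/x\hat R))$ and the fact that $x$ remains a nonzero divisor in the faithfully flat extension $\hat R$ --- so that \autoref{ReesLemma} and \autoref{ReesThm*} are applied to $\hat R$ and its minimal primes, exactly as the paper does.
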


In his later lecture notes \cite{Re2}, Rees modifies the definition of the degree function by adding a term which can only appear when $x\in R$ is  a zero divisor, and proves the generalization of  \autoref{genReesThm*} to the  situation where $x\in R$ is only assumed to satisfy $\dim(R/xR)=d-1$.

Suppose that $x\in R$. Then $(0:x)$ is an ideal in $R$ which is an $R/xR$-module. We may thus compute the multiplicity $e_{I(R/xR)}(0:x)$ of the $R/xR$-module $(0:x)$ with respect to the ideal $I(R/xR)$.

\begin{theorem}[{\cite[Theorem 9.41]{Re2}}]\label{mostgenReesThm*}
Let $(R,\fm_R,\kk)$ be a $d$-dimensional  local  ring   and $I\subset R$ be an $\fm_R$-primary ideal. Then the degree function $d_I: R\setminus\{0\}\to \NN$ given by 
$$
d_I(x)=e(I(R/xR))-e_{I(R/xR)}(0:x)
$$
   for  $x\in R$ such that $\dim (R/xR)=d-1$
can be expressed as
\begin{equation}\label{eq:Rees_Thm}
d_I(x)=\sum_{i=1}^rd_i(I)v_i(x),
\end{equation}
where $v_1,\ldots, v_r$ are the Rees valuations of $I$ and $d_i(I)\in \NN$ for all $i$. 
\end{theorem}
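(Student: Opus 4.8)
The plan is to reduce \autoref{mostgenReesThm*} to the nonzero divisor case treated in \autoref{genReesThm*}, by showing that the \emph{corrected} degree function obeys the same associativity reduction as in \autoref{ReesLemma}, now without assuming $x$ to be a nonzero divisor. Let $P_1,\dots,P_s$ be the minimal primes of $R$ with $\dim(R/P_i)=d$. Since $\dim(R/xR)=d-1$ forces $x\notin P_i$, the image $\overline{x}$ of $x$ in each domain $R/P_i$ is a nonzero divisor with $\dim\big((R/P_i)/\overline{x}(R/P_i)\big)=d-1$. The goal is to prove
\[
e(I(R/xR))-e_{I(R/xR)}(0:x)=\sum_{i=1}^{s}\ell_{R_{P_i}}(R_{P_i})\,d_{I(R/P_i)}(\overline{x}).
\]
Granting this, \autoref{genReesThm*} applies in each domain $R/P_i$ to expand $d_{I(R/P_i)}(\overline{x})$ over the Rees valuations of $I(R/P_i)$; since the Rees valuations of $I$ are exactly those of the $I(R/P_i)$ and $v(\overline{x})=v(x)$ (see \cite{R5} and \cite[Ch.~10]{HS}), collecting terms yields the asserted expression $d_I(x)=\sum_{i=1}^{r}d_i(I)v_i(x)$ with $d_i(I)\in\NN$.

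To establish the displayed reduction I would apply the associativity formula \cite[Theorem 11.2.4]{HS} to \emph{both} multiplicities. The multiplicity $e(I(R/xR))$ of the ring and the $(d-1)$-dimensional multiplicity $e_{I(R/xR)}(0:x)$ of the module $(0:x)$ expand over the same set of minimal primes $Q$ of $R/xR$ with $\dim(R/Q)=d-1$, and with the same factors $e(I(R/Q))$; hence their difference picks up at each such $Q$ the coefficient $\ell_{R_Q}(R_Q/xR_Q)-\ell_{R_Q}(0:_{R_Q}x)$. The key observation is that, because $Q$ is minimal over the principal ideal $xR$, this difference is exactly the Euler characteristic $\chi\big(K_\bullet(x;R_Q)\big)=\ell_{R_Q}(R_Q/xR_Q)-\ell_{R_Q}(0:_{R_Q}x)$ of the Koszul complex on $x$. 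By Krull's principal ideal theorem $\operatorname{ht}(Q)\le 1$; when $\operatorname{ht}(Q)=0$ the ring $R_Q$ is Artinian and $x$ is nilpotent there, so this Euler characteristic vanishes. Thus the term $e_{I(R/xR)}(0:x)$ is precisely what cancels the contributions of the embedded and lower-dimensional components, leaving only the height-one primes $Q$, where $\chi\big(K_\bullet(x;R_Q)\big)=e(xR_Q;R_Q)$ by Serre's theorem.

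It then remains to expand each surviving term $e(xR_Q;R_Q)$ by associativity over the minimal primes of $R_Q$ and to reorganize the resulting double sum by grouping according to the underlying top-dimensional prime $P_i\subseteq Q$. For fixed $P_i$, the inner sum $\sum_{Q}e(xR_Q;(R/P_i)_Q)\,e(I(R/Q))$ is exactly the associativity expansion of the ordinary degree function $d_{I(R/P_i)}(\overline{x})$ in the domain $R/P_i$ (where $\overline{x}$ is a nonzero divisor, so no correction term appears). This produces the desired reduction formula and completes the argument.

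I expect the main obstacle to be the two multiplicity-theoretic identifications carried out prime-by-prime: recognizing the difference of lengths as the Koszul Euler characteristic $\chi(K_\bullet(x;R_Q))$ and hence as $e(xR_Q;R_Q)$ (this is the conceptual heart, since it explains why $e_{I(R/xR)}(0:x)$ is the correct correction and why the zero-divisor and embedded contributions drop out), together with the careful bookkeeping needed to match the surviving height-one primes $Q$ and the associated Rees valuations to the top-dimensional minimal primes $P_i$. Controlling this matching for an arbitrary (possibly non-equidimensional) local ring relies on the standard structure of the set of Rees valuations of $I$ in terms of those of the $I(R/P_i)$.
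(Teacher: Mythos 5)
Your argument is correct, but it takes a genuinely different route from the one in the paper. The paper extends the corrected degree function to modules, $d_I(M,x)=e_{I(R/xR)}(M/xM)-e_{I(R/xR)}(0:_Mx)$, quotes \cite[Lemma 9.11]{Re2} as a black box for its additivity on short exact sequences and its vanishing on $R/P$ with $\dim(R/P)<d$, runs this additivity along a prime filtration of the completion $\hat R$, applies \autoref{ReesThm*} directly to the complete (hence analytically unramified) domains $\hat R/P$, and descends via \autoref{LemmaR2} and \autoref{LemmaR1}. You instead stay in $R$: you expand both $e(I(R/xR))$ and $e_{I(R/xR)}(0:x)$ by the associativity formula over the primes $Q$ minimal over $xR$ with $\dim(R/Q)=d-1$, recognize the local coefficient $\ell_{R_Q}(R_Q/xR_Q)-\ell_{R_Q}(0:_{R_Q}x)$ as $\chi\bigl(K_\bullet(x;R_Q)\bigr)$ --- which vanishes when $\operatorname{ht}(Q)=0$ and equals $e(xR_Q;R_Q)$ when $\operatorname{ht}(Q)=1$ by the one-element case of the Serre/Auslander--Buchsbaum formula --- and regroup by the top-dimensional minimal primes $P_i\subset Q$ to land on $\sum_i\ell_{R_{P_i}}(R_{P_i})\,d_{I(R/P_i)}(\overline{x})$, to which \autoref{genReesThm*} applies since $\overline{x}$ is a nonzero divisor in each domain $R/P_i$. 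Your route replaces the citation of \cite[Lemma 9.11]{Re2} by standard multiplicity theory and explains conceptually why $e_{I(R/xR)}(0:x)$ is exactly the right correction (it cancels the height-zero contributions); the completion then enters only implicitly, inside the proof of \autoref{genReesThm*}. The paper's route buys brevity and uniformity with its proof of \autoref{genReesThm*}. Two bookkeeping points you should make explicit: (i) the matching of index sets --- primes minimal over $xR+P_i$ of dimension $d-1$ versus primes minimal over $xR$ of dimension $d-1$ containing $P_i$ --- uses that $\dim(R/xR)=d-1$ excludes $x$ from every minimal prime of dimension $d$, and that any minimal prime of $R$ properly contained in such a $Q$ automatically has dimension $d$ and gives a one-dimensional localization at $Q$; and (ii) the Rees valuations of $I$ arising from minimal primes $P$ with $\dim(R/P)<d$ simply receive coefficient $d_i(I)=0$, as in the paper. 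Your reduction formula checks against \autoref{ExampleC1}, where it reads $3-1=1\cdot 1+1\cdot 1$.
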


 The coefficients of $d_I(x)$ in  \autoref{ReesThm*}, \autoref{genReesThm*} and \autoref{mostgenReesThm*} are uniquely determined by \cite[Theorem 9.42]{Re2}. 

 \autoref{ReesThm*} is proven in  \autoref{AURD}, in  \autoref{CorThm2}.  \autoref{genReesThm*} and \autoref{mostgenReesThm*} are then easily deduced in  \autoref{SecLR} from  \autoref{genReesThm*}, using arguments of Rees.

 We now give the definition of an $\fm_R$-valuation, which can be found in \cite[\S 3]{ReSh}. 
Let $(R,\fm_R,\kk)$ be a $d$-dimensional local domain. An {\it $\fm_R$-valuation} of $R$ is a valuation $v$ of $R$ such that  its  valuation ring $(\mathcal O_v,\fm_v)$ dominates $R$ and $\mathcal O_v/\m_v$ is a finitely generated extension of $R/\m_R$ of transcendence degree $d-1$.    Equivalently,  $v$ is a divisorial valuation  with respect to $R$ that is centered at $\fm_R$, \cite[Definition 9.3.1]{HS}. 
More generally, if  $R$ is a $d$-dimensional local ring, then an $\fm_R$-valuation of $R$ is an $\fm_R$-valuation of $R/P$, where $P$ is a minimal prime of $R$ such that $\dim(R/P)=d$. 
If $R$ is a local ring, we define $\Div(\fm_R)$ to be the set of equivalence classes of $\fm_R$-valuations of $R$, where two $\fm_R$-valuations are equivalent if they have the same valuation ring.
We note that divisorial valuations have $\ZZ$ as their valuation group and are Noetherian by \cite[Corollary 6.3.5 and Theorem 9.3.2]{HS}.

The following proposition follows from  \cite[Theorem 9.41]{Re2}; we give a short proof of it at the end of \autoref{SecLR}.

\begin{proposition}\label{mval} In  \autoref{ReesThm*}, \autoref{genReesThm*}, and \autoref{mostgenReesThm*}, we have that 
$d_i(I)>0$ if and only if the Rees valuation $v_i$ is an $\fm_R$-valuation.
\end{proposition}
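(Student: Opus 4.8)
The plan is to read off the sign of each coefficient directly from the geometric formula for the degree function, the governing invariant being the dimension of the center of the valuation on $Y$. Since the coefficients $d_i(I)$ are uniquely determined by the degree function by \cite[Theorem 9.42]{Re2}, it suffices to establish the dichotomy in any one presentation, and I would use the formula of \autoref{ThmExc'} (together with \autoref{CorThm2'} in the domain case), which gives
$$
d_v(I)=a_v\,\big(\mathcal L_Y^{d-1}\cdot C(Y,v)\big)_R,\qquad a_v=\ell_{R_{P_v}}(R_{P_v}),\quad \mathcal L_Y=I\mathcal O_Y .
$$
As $R_{P_v}\neq 0$, the factor $a_v$ is the length of a nonzero Artinian module, hence $a_v>0$; so the problem reduces entirely to determining the sign of the intersection number $(\mathcal L_Y^{d-1}\cdot C(Y,v))_R$.

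Next I would record the translation between valuations and dimensions of centers. Writing $E=C(Y,v)$, the local ring $\mathcal O_{Y,E}$ is the valuation ring of $v$ (a discrete valuation ring, as $E$ has codimension $1$ in the component of $Y$ containing it), so its residue field is $R(E)$. Since $E\subset\pi^{-1}(\fm_R)=Y_{\kk}$ is a variety of finite type over $\kk=R/\fm_R$, we have $\trdeg(R(E)/\kk)=\dim E$. By the definition of an $\fm_R$-valuation, $v$ is an $\fm_R$-valuation if and only if this transcendence degree equals $d-1$, that is, if and only if $\dim E=d-1$. Moreover $E$ is a proper closed subscheme of the component of $Y$ containing it, so in all cases $\dim E\leq d-1$.

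The dichotomy then follows from the very definition of the product in \autoref{Proddef1}. The image of $E$ in $S$ is the closed point, so by \autoref{prop:bound_rel_dim} the class $[E]$ lies in $A_{\dim E-d}(Y/S)$, i.e. we intersect $t=d-1$ Cartier divisors with a cycle of relative dimension $k=\dim E-d$, where $k+d=\dim E$. If $\dim E<d-1$ then $t>k+d$, so $(\mathcal L_Y^{d-1}\cdot C(Y,v))_R=0$ by \autoref{Proddef1}, whence $d_v(I)=0$. If instead $\dim E=d-1$ then $t=k+d$, and by \autoref{equiv} the product equals the Snapper--Mumford--Kleiman self-intersection number $I\big((\mathcal L_Y\otimes\mathcal O_E)^{d-1}\big)_{\kk}$ of the restriction of $\mathcal L_Y$ to the $(d-1)$-dimensional projective $\kk$-variety $E$. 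Since $\mathcal L_Y=I\mathcal O_Y$ is relatively ample on $Y$ over $\Spec(R)$, its restriction to $E$ is ample, and the top self-intersection of an ample invertible sheaf on a projective variety of matching dimension is strictly positive (Nakai--Moishezon, cf. \cite{Kl}); combined with $a_v>0$ this gives $d_v(I)>0$. This proves the equivalence whenever the formula of \autoref{ThmExc'} applies.

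The main obstacle is reaching the full generality of \autoref{mostgenReesThm*}, where $R$ need not be excellent and \autoref{ThmExc'} need not apply. Here I would reduce to the geometric setting: by \autoref{ReesLemma} the degree function is a positive combination, over the top-dimensional minimal primes $P$, of the degree functions of $I(R/P)$ in the domains $R/P$, so it suffices to treat a local domain $D$ of dimension $d$. Passing to the completion $\hat D$ (which preserves Hilbert--Samuel multiplicities, hence the values of the degree function, and under which Rees valuations and $\fm_D$-valuations correspond) and using that the top-dimensional minimal-prime quotients of $\hat D$ are complete local domains, and therefore analytically unramified, brings us into the range of \autoref{CorThm2'}, where the sign analysis above applies verbatim. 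The delicate point of this last step—and where I expect the real work to lie—is the bookkeeping that matches, under completion and under \autoref{ReesLemma}, the Rees valuations that are $\fm_R$-valuations with those whose centers have dimension $d-1$, together with the verification that the uniquely determined coefficients are preserved; once this identification is in place, the positivity-versus-vanishing dichotomy is exactly the one carried out above.
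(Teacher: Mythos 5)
Your proposal is correct and follows essentially the same route as the paper: both identify $d_i(I)$ with the intersection number $(\mathcal L_Y^{d-1}\cdot E_i)_R$ (times the positive length factor), translate the $\fm_R$-valuation condition into $\dim E_i=d-1$, get vanishing for smaller-dimensional centers from the definition of the product and strict positivity from ampleness of $\mathcal L_Y$ restricted to $E_i$, and then reduce the general case to analytically unramified domains via completion, \autoref{ReesLemma}, \autoref{LemmaR1}, and \autoref{LemmaR2}. The final ``bookkeeping'' you flag as the delicate point is exactly what the paper disposes of in two lines using those same lemmas, so no new idea is missing.
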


The next remark follows  from the proof of  \autoref{mval} or  from \cite[Proposition 10.4.3]{HS}.

\begin{remark} 
If $(R,\fm_R,\kk)$ is a formally equidimensional local ring and $I\subset R$ is an $\fm_R$-primary ideal, then all the Rees valuations $v_i$ of $I$ are $\m_R$-valuations. Therefore, if in \autoref{ReesThm*}, \autoref{genReesThm*}, or \autoref{mostgenReesThm*}, $R$ is  
formally equidimensional, then  $d_i(I)>0$ for all $i$.
\end{remark}

\begin{example}\label{ExampleC1}  \autoref{ReesLemma} is not always true for $x\in R$ such that $\dim(R/xR)=d-1$ if $x$ is not a nonzero divisor. 
\end{example}
\begin{proof}
Let $\kk$ be a field, $R=\kk[[x,y,z,w]]/(xyz,xyw)$, $f=w-z$. Then
$\dim(R)=3$. The prime ideals $P_1=(x)$ and $P_2=(y)$ have the property that $\dim(R/P_1)=\dim(R/P_2)=3$ and they are the only minimal primes of $R$ with this property. The other minimal prime is $P_3=(z,w)$ which satisfies $\dim R/(z,w)=2$. 
We have that $\dim(R/fR)=2$.
Take $I=\fm_R=(x,y,z,w)$. Notice $R/fR\cong \kk[[x,y,z]]/(xyz)$ so that $d_I(f)=e(I(R/fR))=3$.

We now calculate the expression
\begin{equation}\label{eqC1}
\ell_{R_{P_1}}(R_{P_1})d_{I(R/P_1)}(f)+\ell_{R_{P_2}}(R_{P_2})d_{I(R/P_2)}(f)
\end{equation}
of the right hand side of  \autoref{ReesLemma}. 
$R_{P_1}$ and $R_{P_2}$ are fields and $(R/P_1)/f(R/P_1)\cong \kk[[y,z,w]]/(w-z)\cong \kk[[y,z]]$ and
$(R/P_2)/f(R/P_2)\cong \kk[[x,z,w]]/(w-z)\cong \kk[[x,z]]$. Thus the expression in \autoref{eqC1} is equal to 2, which is not equal to $e(I(R/fR))$.
\end{proof}

\begin{example}\label{ExampleC2}
The conclusions of  \autoref{genReesThm*} are not always true for $x\in R$ such that $\dim(R/xR)=d-1$ if $x$ is not a nonzero divisor.
\end{example}
\begin{proof}
We will show that in the ring $R$ constructed in  \autoref{ExampleC1}, there do not exist functions $d_i(I)$ satisfying the conclusions of    \autoref{genReesThm*} for all $f\in R$ which satisfy $\dim(R/fR)=d-1$ (where $d=\dim R=3$). 

The conclusions of   \autoref{genReesThm*} are true for $f\in R$ which are nonzero divisors and such that $\dim(R/fR)=d-1$, so we do have an expression 
$$
d_I(f)=\sum_{i=1}^rd_i(I)v_i(f)
$$
for $f\in R$ such that $f$ is a nonzero divisor and $\dim(R/fR)=d-1$. Therefore, if such expression existed for all $f\in R$ such that $\dim R/fR=d-1$, it would have to be with this choice of $d_I(f)$.  The Rees valuations of $R$ are $v_1,v_2,v_3$ where $v_1$ is the $\fm$-adic valuation of $R/P_1\cong \kk[[y,z,w]]$, $v_2$ is the $\fm$-adic valuation of $R/P_2\cong \kk[[x,z,w]]$, and $v_3$ is the $\fm$-adic valuation of $R/P_3\cong \kk[[x,y]]$. Since \autoref{ReesLemma} is valid for nonzero divisors $f\in R$,  we have that $d_1(I)=1$, $d_2(I)=1$, and $d_3(I)=0$. Applying this formula to the element $f$ defined in the previous example, we obtain
$\sum_{i=1}^3d_i(I)v_i(f)=2$, which is not equal to $d_I(f)=3$.
\end{proof}

The ring $R$ constructed in  \autoref{ExampleC1} is an analytically unramified local ring, since it is complete and reduced. Thus  \autoref{ExampleC2} also gives a counterexample to \autoref{ReesThm*} when  $R$ is not a   domain, even if we assume that $\dim(R/xR)=d-1$.

 Finally, we verify that the element  $f\in R$ of \autoref{ExampleC1} and \autoref{ExampleC2} does satisfy the conclusions of \autoref{mostgenReesThm*}.

In the above examples, we have that
$$
(0:f)=xyR\cong R/(z,w)=R/P_3\cong \kk[[x,y]]
$$
and $I\kk[[x,y]]$ is the maximal ideal $\fm_{\kk[[x,y]]}$. Now $(0:f)$ is an $R/fR$-module, and computing the multiplicity 
$e_{I(R/fR)}(0:f)$ of $(0:f)$ with respect to $\fm_{R/fR}$ as an $R/fR$-module, we have that 
$e_{I(R/fR)}(0:f)=1$.  Therefore,  we obtain the correct conclusion
$$
e(I(R/fR))-e_{I(R/fR)}(0:f)=d_1(I)v_1(f)+d_2(I)v_2(f).
$$

\subsection{Analytically unramified local domains}\label{AURD}

In this subsection we prove a theorem about intersection theory over an analytically unramified local domain (\autoref{Theorem2}) and deduce several corollaries, including a geometric proof of \autoref{ReesThm*} in  \autoref{CorThm2}.

 We first interpret the theory of Rees valuations on an analytically unramified local domains geometrically in the following proposition.

\begin{proposition}\label{Reesval} Let 
$(R,\fm_R, \kk)$ be a $d$-dimensional analytically unramified local domain and 
 $I\subset R$ be an $\fm_R$-primary ideal. Let $\pi:Y=\Proj(\oplus_{n\gs 0}\overline{I^n})\rightarrow\Spec(R)$ be the normalization of the blowup of $I$. Then $Y$ is a projective $R$-scheme. Let $\Omega$ be the set of integral components of $\pi^{-1}(\fm_R)$. For $E\in\Omega$, the local ring $\mathcal O_{Y,E}$ is a discrete valuations ring. The   canonical valuations $v_E$ of $E\in \Omega$ are the Rees valuations of $I$.
 \end{proposition}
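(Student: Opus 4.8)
The plan is to establish the three assertions in turn: projectivity of $Y$, the discrete valuation ring structure at the components of the exceptional fiber, and the identification of the $v_E$ with the Rees valuations of $I$. Throughout, write $K$ for the fraction field of the domain $R$ and $\mathcal{R}=\oplus_{n\geq 0}I^n$ for the Rees algebra, so that the blowup of $I$ is $\Proj(\mathcal{R})$ and $Y=\Proj(\oplus_{n\geq 0}\overline{I^n})$ is the $\Proj$ of its integral closure $\overline{\mathcal{R}}=\oplus_{n\geq 0}\overline{I^n}t^n$ in $K[t]$.

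First I would prove projectivity. Since $R$ is analytically unramified, a theorem of Rees guarantees that $\overline{\mathcal{R}}$ is a module-finite extension of $\mathcal{R}$ \cite[\S 9]{HS}. Consequently the normalization morphism $\nu\colon Y\to\Proj(\mathcal{R})$ is finite, and since the blowup $\Proj(\mathcal{R})$ is projective over $\Spec(R)$ by \autoref{Lembirat}, the composite $\pi$ exhibits $Y$ as a projective $R$-scheme. Because $I$ is not contained in the unique minimal prime $(0)$ of $R$, the same lemma shows $\pi$ is birational; in particular $Y$ is integral with function field $K$, and it is normal by hypothesis, being the normalization of the blowup.

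Next I would identify the local rings at the components of $\pi^{-1}(\fm_R)$. As $I$ becomes invertible on the blowup, the sheaf $I\mathcal{O}_Y$ is the pullback under $\nu$ of the invertible sheaf $I\mathcal{O}_{\Proj(\mathcal{R})}$, hence invertible, so $V(I\mathcal{O}_Y)$ is an effective Cartier divisor on $Y$. Since $I$ is $\fm_R$-primary we have $\pi^{-1}(\fm_R)=\pi^{-1}(V(I))=V(I\mathcal{O}_Y)$ as sets, and by Krull's principal ideal theorem every component $E\in\Omega$ of this divisor satisfies $\dim(\mathcal{O}_{Y,E})=1$. As $Y$ is normal, $\mathcal{O}_{Y,E}$ is a one-dimensional normal local domain, hence a discrete valuation ring, and its associated valuation $v_E$ on $K$ is the canonical valuation.

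Finally I would match $\{v_E:E\in\Omega\}$ with the Rees valuations of $I$. This is essentially the content of the normalized-blowup description of Rees valuations in \cite[\S 10]{HS} and \cite{R5}: the generic point of each $E$ corresponds to a homogeneous prime $Q$ of $\overline{\mathcal{R}}$ minimal over $I\overline{\mathcal{R}}$, the homogeneous localization recovers $\mathcal{O}_{Y,E}$, and $v_E$ agrees with the Rees valuation attached to $Q$; conversely every such $Q$ arises from a unique component of $\pi^{-1}(\fm_R)$. I expect this last step to be the main obstacle, since it requires translating carefully between the geometric $\Proj$ picture and the algebraic definition of Rees valuations via minimal primes of $I$ in the integrally closed Rees algebra — in particular verifying that the valuation on $K$ induced by $\mathcal{O}_{Y,E}$ coincides with the dehomogenization of the one attached to $Q$, and that the assignment $E\mapsto Q$ is a bijection, so that no Rees valuation is omitted and no spurious valuation is introduced.
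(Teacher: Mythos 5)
Your first step (projectivity via module-finiteness of the integral closure of the Rees algebra, which holds because $R$ is analytically unramified) matches the paper. The remaining two steps, however, contain a genuine gap. For the DVR claim you invoke normality of $Y$, "being the normalization of the blowup." But $\oplus_{n\gs 0}\overline{I^n}t^n$ is the integral closure of $R[It]$ in $R[t]$, not in $K[t]$; when $R$ is not normal these differ (already in degree $0$), and the true normalization of $\Proj(R[It])$ is a priori only a finite birational cover of $Y=\Proj(\oplus_{n\gs 0}\overline{I^n})$. So normality of $Y$ is not free, and the paper deliberately avoids it: it obtains the DVR property from Rees's lemma \cite[Lemma 4.2]{Re4}, applied to the principal ideal $\tfrac{1}{t}B^*$ of $B^*=B[\tfrac1t]$, which is integrally closed because it is an intersection of valuation ideals $\mathfrak q_1\cap\cdots\cap\mathfrak q_r$; the localizations $(B^*)_{\fp_i}$ at the minimal primes of this ideal are then one-dimensional regular local rings with no normality hypothesis on $Y$.

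The more serious issue is your third step, which is the actual content of the proposition and which you leave as a citation plus a list of things "to be verified." The description of Rees valuations in \cite[\S 10]{HS} is given on the charts $\overline{R[I/x_i]}$ (integral closure in the fraction field), i.e.\ on the honest normalized blowup, so it does not transfer verbatim to $Y$ for the reason above; and even granting the transfer, the bijection between components of $\pi^{-1}(\fm_R)$ and Rees valuations has two directions, neither of which you prove. The paper does this work explicitly: starting from the irredundant decomposition $\overline{I^n}=I(v_1)_{ne_1}\cap\cdots\cap I(v_r)_{ne_r}$, it extends each $v_i$ to a valuation $\overline v_i$ of $K(t)$ with $\overline v_i(t)=-e_i$, identifies $(B^*)_{\fp_i}$ with the valuation ring of $\overline v_i$ and hence $B_{(P_i)}=K\cap (B^*)_{\fp_i}$ with the valuation ring of $v_i$, so that $E_i=\Proj(B/P_i)$ is the center of $v_i$ on $Y$; and then it proves $\cap_{i=1}^rP_i=\sqrt{\fm_RB}$ (\autoref{ExcSup}, using the Rees bound $\overline{I^{n+k}}\subset I^n$), which is exactly what guarantees that the $E_i$ are \emph{all} of the integral components of $\pi^{-1}(\fm_R)$ --- the "no spurious and no omitted valuations" check that you correctly identify as the main obstacle but do not carry out. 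As written, the proposal establishes the easy parts and defers the theorem.
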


 We now prove \autoref{Reesval}.

 Let  $v_1,\ldots,v_r$ be the Rees valuations of $I$. 
There exist positive integers $e_1,\ldots,e_r$ such that  
 $$
 \overline{I^n}=I(v_1)_{ne_1}\cap \cdots \cap I(v_r)_{ne_r}
 $$
  for $n\gs 1$, where $I(v_i)_n=\{f\in R\mid v_i(f)\gs n\}$; see\cite[Ch. 10]{HS}. 
   Let $J(i)_n=\overline{I^n}\cap I(v_i)_{ne_i+1}$ for $n\gs 0$. 
   
 Let $t$ be an indeterminant. 
The graded $R$-algebra $B=\sum_{n\gs 0}\overline{I^n}t^n$ is finitely generated since $R$ is analytically unramified, see  \cite[Proposition 5.2.1 \& Corollary 9.2.1]{HS}. 
Then $Y=\Proj(B)$ is the normalization of the blowup of $I$, with natural projection $\pi:Y\rightarrow \Spec(R)$, so that $Y$ is a projective $R$-scheme.

Let $K$ be the quotient field of $R$. There exists an extension of $v_i$ to a valuation $\overline v_i$ of $K(t)$, defined as follows. We prescribe that $\overline{v}_i(t)=-e_i$, $\overline{v}_i(c)=v_i(c)$ if $0\ne c\in K$, and
$$
\overline{v}_i\Big(\sum_{0\neq c_n\in K} c_nt^n\Big)=\min_n\{v_i(c_n)-ne_i\}.
$$
 That this defines a valuation is classical; see \cite[Theorem 3.1]{SM}, where references are given for a proof of this result.

 Let $B^*=B[\frac{1}{t}]=\oplus_{n\in \ZZ}\overline{I^n}t^n$, where $\overline{I^n}=R$ if $n<0$. Let 
$$
\mathfrak p_i=\{x\in B^*\mid \overline v_i(x)\gs 1\},
$$
and note that $\mathfrak p_i$ is a homogeneous prime ideal of $B^*$, whose degree $n$ component is  
$$
(\mathfrak p_i)_n=\left\{\begin{array}{ll}
R&\mbox{ if }n<0\\
J(i)_n&\mbox{ if }n\gs 0.
\end{array}\right.
$$
Let $\fq_i=\{x\in B^*\mid \overline v_i(x)\gs e_i\}$ and notice that $\fq_i$ is a $\fp_i$-primary ideal.  
The equality $\frac{1}{t}B^*=\mathfrak q_1\cap \cdots\cap \mathfrak q_r$ implies $\frac{1}{t}B^*$ is an integrally closed ideal since it is an intersection of valuation ideals; see for example \cite[Lemma 4.6]{BDHM}. Therefore, by \cite[Lemma 4.2]{Re4} 
the localization $A_i=(B^*)_{\fp_i}$ is a regular local ring of dimension one, and hence is a valuation ring of dimension one.

 The valuation ring $\mathcal O_{{\overline v}_i}$ of $\overline \nu_i$ contains $A_i$ and  these rings have  the same quotient field. By \cite[Ch. VI, \S 9]{ZS2}, the rings associated to places $\mathcal P$ of a field $K$ are the valuation rings of $K$. Thus by \cite[Theorem VI.3.3]{ZS2}, 
$\mathcal O_{\overline\nu_i}$ is a localization of $A_i$ with respect to a prime ideal of $A_i$.
Since $A_i$ has dimension one and $\overline v_i$ is not the trivial valuation, we have that $A_i=\mathcal O_{{\overline v}_i}$.

Let $K^*=K(t)$ be the quotient field of $B^*$. Since $f\in K\cap A_i$ holds if and only if $f=\frac{t^ax}{t^ay}$ 
with $a\in \ZZ$ and $xt^a, yt^a\in [B^*]_a$  with $yt^a\not\in \mathfrak p_i$ (so that $a\gs 0$ as $\frac{1}{t}\in \mathfrak p_i$), we conclude that  $K\cap A_i=B_{(P_i)}$, where 
$P_i=B\cap \mathfrak p_i=\sum_{n\gs 0}J(i)_nt^n$ is a homogeneous primes ideal of $B$.
Now $0\ne f\in K\cap A_i$ if and only if $v_i(f)\gs 0$, since $\overline{v}_i$ is an extension of $v_i$ to $K(t)$. Thus $B_{(P_i)}$ is the valuation ring of $v_i$.
Let $E_i$ be the integral closed subscheme $E_i=\Proj(B/P_i)$ of $Y$. We have that
$\mathcal O_{Y,E_i}=B_{(P_i)}=\mathcal O_{v_i}$, so $E_i$ is the center of $v_i$ on $Y$.

\begin{lemma}\label{ExcSup} With the notation above, we have $\cap_{i=1}^rP_i=\sqrt{\fm_R B}$.
\end{lemma}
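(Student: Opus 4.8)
The plan is to compare $\fm_R B$ with the ideal $J=\tfrac{1}{t}B^{*}\cap B$ of $B$, reducing the lemma to the two statements $\bigcap_{i=1}^{r}P_i=\sqrt{J}$ and $\sqrt{J}=\sqrt{\fm_R B}$. First I would extract the radical of $\tfrac1t B^{*}$ from the primary decomposition already produced: since $\tfrac1t B^{*}=\fq_1\cap\cdots\cap\fq_r$ with $\fq_i$ being $\fp_i$-primary, the minimal primes of $\tfrac1t B^{*}$ are exactly $\fp_1,\ldots,\fp_r$, so $\sqrt{\tfrac1t B^{*}}=\fp_1\cap\cdots\cap\fp_r$. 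Intersecting with $B$ and using $P_i=B\cap\fp_i$ gives $\bigcap_i P_i=B\cap\sqrt{\tfrac1t B^{*}}$, and for $b\in B$ one has $b\in\sqrt{\tfrac1t B^{*}}$ iff some power lies in $\tfrac1t B^{*}\cap B=J$; hence $\bigcap_i P_i=\sqrt{J}$, the radical being taken in $B$. A direct computation shows $\tfrac1t B^{*}=\bigoplus_{n}\overline{I^{\,n+1}}t^{n}$, so that $J_n=\overline{I^{\,n+1}}$ for $n\gs 0$. Thus the lemma reduces to the identity $\sqrt{J}=\sqrt{\fm_R B}$.

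For the inclusion $\sqrt{\fm_R B}\subseteq\bigcap_i P_i=\sqrt{J}$ I would argue that each $v_i$ is centered at $\fm_R$: the degree-zero part $(P_i)_0=\{x\in R\mid v_i(x)\gs 1\}$ is a prime ideal that contains $I$, because $I\subseteq\overline{I}=\bigcap_j I(v_j)_{e_j}\subseteq I(v_i)_{e_i}$ and $e_i\gs 1$. Consequently $(P_i)_0\supseteq\sqrt{I}=\fm_R$, so $\fm_R\subseteq P_i$ and $\fm_R B\subseteq P_i$ for every $i$; taking radicals and intersecting yields $\sqrt{\fm_R B}\subseteq\bigcap_i P_i$.

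The substantive direction is $J\subseteq\sqrt{\fm_R B}$. Here I would use that, since $R$ is analytically unramified, $B=\bigoplus_{n\gs 0}\overline{I^{\,n}}t^{n}$ is module-finite over the Rees algebra $R[It]$ (each $xt^{n}$ with $x\in\overline{I^{\,n}}$ is integral over $R[It]$, and $B$ is a finitely generated $R$-algebra). Graded module-finiteness then gives an $n_0$ with $\overline{I^{\,k+1}}=I\,\overline{I^{\,k}}$ for all $k\gs n_0$, whence $\overline{I^{\,k+j}}=I^{\,j}\overline{I^{\,k}}$ for $k\gs n_0$ and $j\gs 0$. Now take a homogeneous element $ct^{n}\in J$, so $c\in\overline{I^{\,n+1}}$; the case $n=0$ is immediate since $\overline{I}\subseteq\overline{\fm_R}=\fm_R$, so I may assume $n\gs 1$ and pick $m\gs n_0$, so that $mn\gs n_0$. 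Then
\[
c^{m}\in\overline{I^{\,m(n+1)}}=\overline{I^{\,mn+m}}=I^{\,m}\,\overline{I^{\,mn}}\subseteq\fm_R\,\overline{I^{\,mn}}=(\fm_R B)_{mn},
\]
so $(ct^{n})^{m}\in\fm_R B$ and $ct^{n}\in\sqrt{\fm_R B}$. This proves $\sqrt{J}\subseteq\sqrt{\fm_R B}$, and combined with the previous two paragraphs gives $\bigcap_i P_i=\sqrt{J}=\sqrt{\fm_R B}$.

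The main obstacle is precisely this last step: the hypothesis on $c$ is purely valuative ($c\in\overline{I^{\,n+1}}$ means $v_i(c)\gs(n+1)e_i$ for all $i$), and valuations only detect membership in integrally closed ideals, whereas the target $\fm_R\overline{I^{\,mn}}=(\fm_R B)_{mn}$ need not be integrally closed. The stabilization $\overline{I^{\,k+1}}=I\,\overline{I^{\,k}}$, available only because the integral closure is finite for analytically unramified $R$, is exactly what converts the integral-closure information into genuine membership of a power in $\fm_R B$; without it a valuative estimate alone would not suffice.
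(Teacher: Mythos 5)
Your proof is correct, and it follows the same two--inclusion strategy as the paper (the easy containment $\fm_R B\subseteq\cap_i P_i$ from $\fm_R\subseteq I(v_i)_1$, then a power trick for the hard containment), but the hard direction is packaged differently. The paper works directly with the valuative condition: for $ft^n\in\cap_i P_i$ it has $v_i(f)\gs ne_i+1$, deduces $f^m\in\overline{I^{mn+1+k}}$ for $m\gs(\max_i e_i)(k+1)$, and then invokes Rees' linear comparison $\overline{I^{n+k}}\subseteq I^n$ (\cite[Theorem 9.1.2]{HS}) to land in $I^{mn+1}\subseteq\fm_R\overline{I^{mn}}$. You instead identify $\cap_i P_i$ with $\sqrt{J}$ for $J=\tfrac1tB^*\cap B$, whose graded pieces are $J_n=\overline{I^{n+1}}$, which converts the valuative hypothesis into the cleaner membership $c\in\overline{I^{n+1}}$; you then use the eventual stabilization $\overline{I^{k+1}}=I\,\overline{I^{k}}$ coming from module-finiteness of $B$ over $R[It]$. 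Both key inputs are equivalent consequences of $R$ being analytically unramified, so neither argument is more general, but your reduction through $\sqrt{J}$ makes transparent why the exponent shift by one is exactly what is being measured, while the paper's route avoids introducing the auxiliary ideal $J$ and stays closer to the Rees-valuation inequalities already set up. (One trivial point: in the last step take $m\gs\max(n_0,1)$ so that $I^m\subseteq\fm_R$.)
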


\begin{proof} We have that $\fm_R=I(v_i)_1$ for all $i$. If  $ft^n\in \fm_RB_n$, then $v_i(f)\gs ne_i+1$ for all $i$, which implies that
$ft^n\in \left( \cap_{i=1}^rP_i\right)_n$. Therefore,    $\fm_RB\subset \cap_{i=1}^rP_i$ and thus $\sqrt{\fm_RB}\subset \cap_{i=1}^rP_i$. 

Conversely, suppose that $ft^n\in \left(\cap_{i=1}^rP_i\right)_n$. We must find $m>0$ such that $f^mt^{mn}\in \fm_R\overline{I^{mn}}t^{mn}$. By \cite{Re5} or \cite[Theorem 9.1.2]{HS}, there exists $k>0$ such that $\overline{I^{n+k}}\subset I^n$ for all $n\gs 0$. Since  $v_i(f)\gs ne_i+1$ implies $v_i(f^m)\gs (mn+1+k)e_i$ for all $i$ if $m\gs (\max_ie_i)(k+1)$, for $m$ sufficiently large we have
$$
f^m\in \overline{I^{mn+1+k}}\subset I^{mn+1}\subset \fm_R\overline{I^{mn}},
$$
so that $\cap_{i=1}^rP_i\subset\sqrt{\fm_R B}$.
\end{proof}

\begin{remark} Since the support of $\Proj(B/\m_RB)$ is $\pi^{-1}(\fm_R)$, we have that $\pi^{-1}(\fm_R)=\cup_{i=1}^rE_i$ by  \autoref{ExcSup}.
\end{remark}

The above analysis proves  \autoref{Reesval}.

We now prove a theorem about intersection products from which we  deduce in \autoref{CorThm2} a geometric proof of  \autoref{ReesThm*}. 

\begin{theorem}\label{Theorem2} Let   $(R,\fm_R,\kk)$ be an  analytically unramified local domain  of dimension $d$. Let $I$ be an $\fm_R$-primary ideal of $R$ and  $\pi:Y=\Proj(\oplus_{n\gs 0}\overline{I^n})\rightarrow\Spec(R)$ be the natural projection.  Suppose that $F_1,\ldots, F_{d-1}$ are Cartier divisors on $Y$ with support above $\m_R$.  Then for any $0\ne x\in R$ we have
$$
-(F_1\cdot\ldots\cdot F_{d-1}\cdot T^*)_R=\sum_{E}v_E(x)(F_1\cdot\ldots
\cdot F_{d-1}\cdot E)_{R},
$$
where $T^*$ is the strict transform of $T=\Spec(R/xR)$ in $Y$ and 
$E$ ranges over the integral components of $\pi^{-1}(\fm_R)$. The local rings $\mathcal O_{Y,E}$ are discrete valuation rings and the canonical valuations $v_E$ of these $E$ are the Rees valuations of $I$. 
\end{theorem}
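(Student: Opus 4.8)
The plan is to exploit the fact that, because $R$ is a domain, $x$ is a global nonzero rational function on the \emph{integral} scheme $Y$, so its principal divisor $\divv(x)$ is rationally equivalent to zero. The asserted equality will be nothing more than this rational triviality, once $\divv(x)$ is split into its ``horizontal'' part (the strict transform $T^*$) and its ``vertical'' part (supported on the exceptional fibre $\pi^{-1}(\fm_R)$, where $\ord_E$ becomes the Rees valuation $v_E$), and then intersected with $F_1\cdot\ldots\cdot F_{d-1}$.

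First I would record the cycle. Since $R$ is an analytically unramified domain, $Y=\Proj(\oplus_{n\gs 0}\overline{I^n})$ is an integral projective $R$-scheme with function field $K=R(Y)=\operatorname{Frac}(R)$, and $0\ne x\in R\subset K$ determines the principal Weil divisor $[\divv(x)]\in Z(Y)$ as in \autoref{SecCycle}. By \autoref{Reesval}, each integral component $E$ of $\pi^{-1}(\fm_R)$ has codimension one in $Y$, the local ring $\mathcal O_{Y,E}$ is a discrete valuation ring, and its canonical valuation is the Rees valuation $v_E$; hence $\ord_E(x)=v_E(x)$. Separating the components of $\divv(x)$ into those lying over $\fm_R$ and those dominating a height-one prime of $R$, and using that the effective Cartier divisor $T=\Spec(R/xR)$ has total transform equal to strict transform plus exceptional part, I obtain in $Z(Y)$ the decomposition
$$
[\divv(x)]=[T^*]+\sum_{E}v_E(x)\,[E]+\gamma,
$$
where $\gamma$ collects any remaining components (present only when $Y$ fails to be universally catenary). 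A dimension count through \autoref{lem:rel_dim} shows that every horizontal component dominates a height-one prime birationally and so has $\dim_SV=-1$, each $E$ has $\trdeg(R(E)/\kk)=d-1$ and thus $\dim_SE=-1$ as well, while every component in $\gamma$ satisfies $\dim_S<-1$.

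The crux is that $[\divv(x)]$ is a principal cycle, hence rationally equivalent to zero in $A_{-1}(Y/S)$ by the definition of $P_*(Y/S)$ recalled before \autoref{PropRE}. Since each $F_i$ is supported over $\fm_R$, the support hypothesis for the product $(-)_R$ is satisfied, and the iterated Cartier-divisor operation of \autoref{Prop2.3} descends to rational equivalence classes, so that $(F_1\cdot\ldots\cdot F_{d-1}\cdot[\divv(x)])_R=0$. Intersecting the displayed relation and using the additivity and multilinearity of \autoref{Prop2.3}(a) and \autoref{CorR18}, the terms of $\gamma$ contribute $0$ (after $d-1$ intersections they land in $A_{<-d}(Y_\kk/S)=0$), so
$$
0=(F_1\cdot\ldots\cdot F_{d-1}\cdot T^*)_R+\sum_{E}v_E(x)\,(F_1\cdot\ldots\cdot F_{d-1}\cdot E)_R.
$$
Rearranging yields the theorem, and the description of the rings $\mathcal O_{Y,E}$ and of the valuations $v_E$ as the Rees valuations of $I$ is precisely \autoref{Reesval}.

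I expect the main obstacle to be the careful justification of the decomposition of $[\divv(x)]$ together with the relative-dimension bookkeeping: one must confirm that the non-exceptional part of $\divv(x)$ is exactly the strict-transform cycle $T^*$ with the correct multiplicities (using that $\pi$ is an isomorphism near the generic points of the horizontal components, which lie over height-one primes away from $\fm_R$), that $\ord_E(x)=v_E(x)$ on each exceptional $E$, and that the possible extra components $\gamma$ genuinely drop out of the intersection number. The conceptual content is that a global rational function has a rationally trivial divisor, and splitting that divisor into its horizontal and vertical parts converts this triviality into the claimed balance between the degree of the strict transform and the weighted exceptional contributions.
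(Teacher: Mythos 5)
Your proposal is correct and follows essentially the same route as the paper: both proofs observe that $[\divv(x)]$ is a principal cycle, hence vanishes in $A_{-1}(Y/S)$, decompose it as $[T^*]+\sum_E v_E(x)[E]$ (using that $\mathcal O_{Y,E}$ is the valuation ring of the Rees valuation $v_E$ by \autoref{Reesval}, so $\ord_E(x)=v_E(x)$), and then intersect with $F_1\cdot\ldots\cdot F_{d-1}$ via \autoref{Prop2.3}. Your extra term $\gamma$ is harmless but in fact vacuous here: by \autoref{Reesval} every integral component of $\pi^{-1}(\fm_R)$ carries a divisorial (Rees) valuation and so has relative dimension exactly $-1$, and in any case the principal cycle $[\divv(x)]^1$ retains only the relative-codimension-one part by definition.
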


\begin{proof}  Let $E_1,\ldots,E_r$ be the integral components of $\pi^{-1}(\fm_R)$.  
The associated valuations $v_i=v_{E_i}$  are the Rees valuations of $I$ by  \autoref{Reesval}. 
Let $S=\Spec(R)$.  
Note  that $E_i\subset Y_{\kk}$ for all $i$. 

Using the notation in \autoref{SecCycle} and \autoref{SecRelInt}, we  have 
$$
[\mathcal O_Y/x\mathcal O_Y]=[\divv(x)]\in P_{-1}(Y/S),
$$
so that $[\mathcal O_Y/x\mathcal O_Y]=0$ in $A_{-1}(Y/S)$. Therefore,
$$
0=[\mathcal O_Y/x\mathcal O_Y]=[T^*]+\sum_{i=1}^r v_i(x)[E_i]
$$
in $A_{-1}(Y/S)$,
as  $\ell_{\mathcal O_{Y,E_i}}(\mathcal O_{Y,E_i}/x\mathcal O_{Y,E_i})=v_i(x)$ since $\mathcal O_{Y,E_i}$ is the valuation ring of $v_i$. Thus
$$
0=F_1\cdot\ldots\cdot F_{d-1}\cdot [T^*]+\sum_{i=1}^r v_i(x)F_1\cdot\ldots\cdot F_{d-1}\cdot [E_i]
$$
 in $A_{-d}(Y_{\kk}/S)=A_0(Y_{\kk}/\kk)$.

Therefore, computing the intersections products in $A_0(Y_{\kk}/\kk)$ over $\kk$,  we have that 
\begin{equation}\label{Req10}
0=\int_{Y_{\kk}}F_1\cdot\ldots\cdot F_{d-1}\cdot [T^*]+\sum_{i=1}^r v_i(x)\int_{Y_{\kk}}F_1\cdot\ldots\cdot F_{d-1}\cdot [E_i].
\end{equation}
Moreover, 
\begin{equation}\label{Req11}
\int_{Y_{\kk}}F_1\cdot\ldots\cdot F_{d-1}\cdot [E_i] 
=(F_1\cdot\ldots\cdot F_{d-1}\cdot E_i)_R
\end{equation}
and
\begin{equation}\label{Req12}
\int_{Y_{\kk}}F_1\cdot\ldots\cdot F_{d-1}\cdot [T^*] 
=(F_1\cdot\ldots\cdot F_{d-1}\cdot T^*)_R
\end{equation}
by \autoref{Proddef1}. 
The result now follows from  \autoref{Req10}, \autoref{Req11}, and \autoref{Req12}.
\end{proof}

\begin{corollary}\label{CorThm2}  Let   $(R,\fm_R,\kk)$ be an  analytically unramified local domain  of dimension $d$.  Let $I$ be an $\fm_R$-primary ideal of $R$ and $\pi:Y=\Proj(\oplus_{n\gs 0}\overline{I^n})\rightarrow\Spec(R)$ be the natural projection
 and let $\mathcal L_Y =I\mathcal O_Y$.  Then for any $0\ne x\in R$ we have
$$
e(I(R/xR))=\sum_{E} v_E(x)(\mathcal L_Y^{d-1}\cdot E)_{R},
$$
where $E$ ranges over the  integral components of $\pi^{-1}(\fm_R)$. 
The local rings $\mathcal O_{Y,E}$ are discrete valuation rings and the canonical valuations $v_E$ of these $E$ are the Rees valuations of $I$. 
\end{corollary}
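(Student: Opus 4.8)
The plan is to deduce the Corollary from \autoref{Theorem2} by specializing all of the Cartier divisors $F_i$ to $\mathcal L_Y$, and then to identify the resulting left-hand side $-(\mathcal L_Y^{d-1}\cdot T^*)_R$ with the Hilbert--Samuel multiplicity $e(I(R/xR))$ by applying \autoref{TheoremMultInt} to the local ring $R/xR$. The final sentence of the statement---that the $\mathcal O_{Y,E}$ are discrete valuation rings and the $v_E$ are the Rees valuations of $I$---is exactly the content of \autoref{Reesval}, so only the displayed equality requires argument.

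First I would choose a Cartier divisor representing $\mathcal L_Y$. Since $Y$ is integral and $I$ is $\fm_R$-primary, $I\mathcal O_Y$ is invertible and agrees with $\mathcal O_Y$ over $\Spec(R)\setminus\{\fm_R\}$, so there is an effective Cartier divisor $D$ supported above $\fm_R$ with $\mathcal O_Y(-D)=I\mathcal O_Y=\mathcal L_Y$. Intersecting with the invertible sheaf $\mathcal L_Y$ thus means intersecting with the Cartier divisor $-D$, which has support above $\fm_R$ and is therefore admissible in \autoref{Theorem2}. Taking $F_1=\cdots=F_{d-1}=-D$ there gives
$$
-(\mathcal L_Y^{d-1}\cdot T^*)_R=\sum_E v_E(x)(\mathcal L_Y^{d-1}\cdot E)_R ,
$$
and it remains to show $-(\mathcal L_Y^{d-1}\cdot T^*)_R=e(I(R/xR))$.

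To this end I would regard the strict transform (with its natural scheme structure) as a morphism $\alpha\colon T^*\to \Spec(R/xR)$. Because $R$ is a domain and $x\ne 0$, the element $x$ avoids the unique minimal prime $(0)$, so it is a nonzero divisor and $\dim(R/xR)=d-1$; moreover over the isomorphism locus $\Spec(R)\setminus\{\fm_R\}$ the map $\pi$ is an isomorphism, so $\alpha$ is proper and birational. Since $\mathcal L_Y\otimes\mathcal O_{T^*}=I\mathcal O_{T^*}=I(R/xR)\mathcal O_{T^*}$ is invertible, the second part of \autoref{TheoremMultInt}, applied to the $(d-1)$-dimensional local ring $R/xR$, yields
$$
e(I(R/xR))=-\big((\mathcal L_Y\otimes\mathcal O_{T^*})^{d-1}\big)_{R/xR}.
$$

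The one genuine point is to match this with $(\mathcal L_Y^{d-1}\cdot T^*)_R$, since the two numbers are a priori computed over the different base rings $R$ and $R/xR$. I would argue this directly from the definition \eqref{Proddef1}: both are the degree $\int$ taken over the common residue field $\kk$ of one and the same proper $\kk$-scheme $T^*_{\kk}=T^*\times_{\Spec R}\Spec\kk$. Indeed, for an integral $W\subseteq T^*$ the order $\ord_W(j^*(-))$ entering \eqref{CartInt} depends only on the local rings along $W$, not on whether the base is $\Spec(R)$ or $\Spec(R/xR)$; iterating \eqref{CartInt}, the cycle $\mathcal L_Y^{d-1}\cdot[T^*]$ formed inside $Y$ coincides as a formal sum with $(\mathcal L_Y\otimes\mathcal O_{T^*})^{d-1}\cdot[T^*]$ formed inside $T^*$, the sole difference being a uniform shift of the relative-dimension index by one (from $-d$ to $-(d-1)$) caused by passing from $R$ to $R/xR$. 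Under the identifications of \eqref{changebase} for each base ring the two resulting zero-cycles over $\kk$ are literally equal, hence have the same degree. Chaining the three displays proves the Corollary. I expect this base-change compatibility of the intersection product to be the main obstacle; the rest is a direct appeal to \autoref{Theorem2}, \autoref{TheoremMultInt}, and \autoref{Reesval}.
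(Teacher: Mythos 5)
Your proposal is correct and follows essentially the same route as the paper: specialize the $F_i$ in \autoref{Theorem2} to a Cartier divisor representing $\mathcal L_Y$ (supported above $\fm_R$), and identify $-(\mathcal L_Y^{d-1}\cdot T^*)_R$ with $e(I(R/xR))$ by applying the second statement of \autoref{TheoremMultInt} to the proper birational morphism $T^*\to\Spec(R/xR)$. The paper compresses the base-change step into ``by the definition of the intersection product in \autoref{Proddef1} and \autoref{Proddef2}''; your explicit verification that both numbers are degrees of the same zero-cycle over $\kk$ is the correct justification of that step.
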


\begin{proof} Let notation be as in the statement and proof of \autoref{Theorem2}.
We have that $T^*\rightarrow T$  is birational and projective and $I\mathcal O_{T^*}$ is invertible (by  \cite[Corollary II.7.15]{H}).
 Let $D$ be a Cartier divisor on $Y$ such that $I\mathcal O_Y=\mathcal O_Y(D)$, and let $F_i=D$ for $1\ls i\ls d-1$.
 Thus
 $$
 e(I(R/xR))=-((I\mathcal O_{T^*})^{d-1})_{R/xR}=
 -(D^{d-1}\cdot T^*)_R
 $$ 
 by  \autoref{TheoremMultInt} and the definition of the intersection product in  \autoref{Proddef1} and \autoref{Proddef2}. The result now   follows from \autoref{Theorem2}.
 
\end{proof}

 It follows from \autoref{CorThm2}  that the $d_i(I)$ in  \autoref{ReesThm*} can be calculated by an intersection product.

\begin{corollary}\label{Theorem2*} Let $(R,\fm_R,\kk)$ be a $d$-dimensional  analytically unramified local domain and $I\subset R$ be an $\fm_R$-primary ideal.  Let $\pi:Y=\Proj(\oplus_{n\gs 0}\overline{I^n})\rightarrow\Spec(R)$ be the natural projection. Index the integral components of $\pi^{-1}(\fm_R)$ as $E_1,\ldots,E_r$, where the valuation ring of $v_i$ is $\mathcal O_{Y,E_i}$ (as shown above). Let $d_i(I)$ be the coefficients of the expansion 
$
d_I(x)=\sum_{i=1}^rd_i(I)v_i(x)
$ 
in  \autoref{ReesThm*}. Then 
$$
d_i(I)=((I\mathcal O_Y)^{d-1}\cdot E_i)_R
$$
for $1\ls i\ls r$.
\end{corollary}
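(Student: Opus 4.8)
The plan is to obtain the identity purely by comparing two representations of the degree function $d_I$ and then appealing to the uniqueness of the coefficients in Rees' theorem; all the substantive work has already been carried out in \autoref{CorThm2}, \autoref{Reesval}, and \autoref{ReesThm*}, so the corollary is essentially a bookkeeping step.

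First I would invoke \autoref{CorThm2}, which gives, for every $0\ne x\in R$,
$$
e(I(R/xR))=\sum_{E} v_E(x)(\mathcal L_Y^{d-1}\cdot E)_R,
$$
the sum running over the integral components $E$ of $\pi^{-1}(\fm_R)$. By \autoref{Reesval} these components are exactly $E_1,\ldots,E_r$, and under the indexing fixed in the statement the canonical valuation $v_{E_i}$ is the Rees valuation $v_i$. Since $\mathcal L_Y=I\mathcal O_Y$, this rewrites as
$$
d_I(x)=\sum_{i=1}^r\big((I\mathcal O_Y)^{d-1}\cdot E_i\big)_R\,v_i(x).
$$

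On the other hand, \autoref{ReesThm*} provides the representation $d_I(x)=\sum_{i=1}^r d_i(I)v_i(x)$, valid for the same range of $x$. Subtracting the two displays yields a relation $\sum_{i=1}^r\big(d_i(I)-((I\mathcal O_Y)^{d-1}\cdot E_i)_R\big)v_i(x)=0$ holding for all $0\ne x\in R$. I would then conclude by the uniqueness of the coefficients in the expansion of $d_I$ along the Rees valuations, namely \cite[Theorem 9.42]{Re2}: two coefficient tuples representing the same degree function must coincide, so $d_i(I)=((I\mathcal O_Y)^{d-1}\cdot E_i)_R$ for each $i$.

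The one point warranting care is the invocation of this uniqueness. I would need to confirm that \cite[Theorem 9.42]{Re2} applies to an arbitrary coefficient representation, i.e.\ that the Rees valuations $v_1,\ldots,v_r$ are independent enough as functions on $R\setminus\{0\}$ to force every coefficient in the displayed relation to vanish, rather than only yielding uniqueness among representations subject to a positivity constraint. Granting this, the corollary follows immediately from the results already established.
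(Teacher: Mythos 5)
Your proposal is correct and matches the paper's own reasoning: the corollary is stated without a separate proof precisely because it follows from \autoref{CorThm2} combined with the uniqueness of the coefficients $d_i(I)$ from \cite[Theorem 9.42]{Re2}, exactly as you argue (the paper makes this explicit in the introduction around equation \autoref{I6}). Your caveat about verifying that the uniqueness statement applies to arbitrary coefficient tuples is reasonable diligence, but the paper relies on the same citation without further qualification.
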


The conclusions of \autoref{Theorem2} and \autoref{CorThm2} are achieved for more general birational proper morphisms, as the following corollaries show.

\begin{corollary}\label{Cor2}
Let $(R,\fm_R,\kk)$ be a  local domain. Further assume  that $\phi:W\rightarrow \Spec(R)$ is a proper birational morphism such that 
$W$ is normal. Suppose that $F_1,\ldots, F_{d-1}$ are Cartier divisors on $W$ with support above $\m_R$.  Then for any $0\ne x\in R$ we have
$$
-(F_1\cdot\ldots\cdot F_{d-1}\cdot T^*)_R=\sum_{E}v_E(x)(F_1\cdot\ldots\cdot F_{d-1}\cdot E)_{R},
 $$
where $T^*$ is the strict transform of $T=\Spec(R/xR)$ in $W$ and 
$E$ ranges over the prime  divisors $E$ which are supported above $\fm_R$.
\end{corollary}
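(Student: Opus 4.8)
The plan is to transcribe the proof of \autoref{Theorem2} verbatim, with the scheme $Y=\Proj(\oplus_{n}\overline{I^n})$ there replaced by the given normal scheme $W$; the only properties of $Y$ actually used in that argument are that it is integral, proper and birational over $R$, and normal along its prime divisors above $\fm_R$, all of which I would first check for $W$. As $R$ is a domain, $\Spec(R)$ is integral; as $\phi$ is birational and $W$ is normal, $W$ is irreducible and reduced, hence integral, with function field $R(W)=\operatorname{Frac}(R)=:K$ and $\dim_SW=0$ by \autoref{prop:bound_rel_dim}. For each prime divisor $E$ on $W$ supported above $\fm_R$, i.e.\ each integral $E\subset\phi^{-1}(\fm_R)$ with $\dim\mathcal O_{W,E}=1$, normality makes $\mathcal O_{W,E}$ a one-dimensional normal local domain, hence a DVR; this defines $v_E$ and gives $\ord_E(x)=\ell_{\mathcal O_{W,E}}(\mathcal O_{W,E}/x\mathcal O_{W,E})=v_E(x)$. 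This use of normality is the single ingredient genuinely beyond \autoref{Theorem2}.

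Next I would run the principal-cycle computation. Since $0\ne x\in R\subset K$ is a regular global function on the integral scheme $W$, the divisor $\divv(x)$ is effective and $[\mathcal O_W/x\mathcal O_W]=[\divv(x)]$. Separating the components that dominate a height-one prime of $R$ minimal over $xR$, which assemble into the strict transform $T^{*}$ of $T=\Spec(R/xR)$, from those lying above $\fm_R$, I obtain $[\mathcal O_W/x\mathcal O_W]=[T^{*}]+\sum_{E}v_E(x)[E]$. Because $x\in K^{*}=R(W)^{*}$, the relative-codimension-one cycle $[\divv(x)]^{1}$ is principal, hence zero in $A_{-1}(W/S)$, so that
$$
0=[T^{*}]+\sum_{E}v_E(x)[E]\qquad\text{in }A_{-1}(W/S).
$$

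I would then intersect both sides with the Cartier divisors $F_1,\dots,F_{d-1}$, all supported above $\fm_R$, by iterating the operation \autoref{CartInt} and using the additivity in part (a) of \autoref{Prop2.3}. The result lies in $A_{-d}(W_\kk/S)=A_0(W_\kk/\kk)$ by \autoref{changebase}. Applying $\int_{W_\kk}$ and the definition \autoref{Proddef1} converts each term $\int_{W_\kk}F_1\cdots F_{d-1}\cdot[\,\cdot\,]$ into the corresponding product $(F_1\cdots F_{d-1}\cdot(\,\cdot\,))_R$, yielding
$$
0=(F_1\cdots F_{d-1}\cdot T^{*})_R+\sum_{E}v_E(x)(F_1\cdots F_{d-1}\cdot E)_R,
$$
which is the asserted identity after transposing the first summand.

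The step requiring the most care is the decomposition of $[\divv(x)]$: I must be sure that every relative-codimension-one component of $\divv(x)$ is either horizontal, hence part of $T^{*}$, or lies above $\fm_R$, hence one of the $E$. When $\phi$ is an isomorphism over $\Spec(R)\setminus\{\fm_R\}$, as for the normalized blowup in \autoref{Theorem2} and in the intended applications, this is transparent: over the isomorphism locus $\divv(x)$ coincides with $T$, so its non-exceptional part is exactly $T^{*}$, while every remaining component sits inside $\phi^{-1}(\fm_R)$. For a fully general $\phi$ one must still rule out components lying over primes of intermediate height, which can a priori occur when $W$ is not universally catenary and the non-isomorphism locus of $\phi$ exceeds $\{\fm_R\}$; controlling these (for instance under the operative hypothesis that $\phi$ is an isomorphism away from $\fm_R$) is the only delicate point, the remainder of the proof being a direct transcription of \autoref{Theorem2}. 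I would also invoke \autoref{ThmR1} to record that the resulting products are insensitive to replacing $W$ by a dominating normal modification, which is what makes the statement a genuine strengthening of \autoref{Theorem2}.
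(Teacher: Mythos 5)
Your proposal follows the paper's proof exactly: the paper's entire argument for this corollary is that normality makes every $\mathcal O_{W,E}$ a (discrete) valuation ring and that ``with small modifications, the proof of \autoref{Theorem2} extends,'' and your write-up --- DVRs at the prime divisors above $\fm_R$, the vanishing of the principal cycle $[\divv(x)]^1$ in $A_{-1}(W/S)$, and iterated intersection with $F_1,\ldots,F_{d-1}$ via \autoref{Prop2.3} --- is precisely those modifications spelled out.

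The ``delicate point'' you isolate is, however, genuinely delicate, and the paper's proof does not address it at all. For a general proper birational $\phi$ the cycle $[\divv(x)]^1$ may contain prime divisors $V$ with $\phi(V)=V(P)$ for a prime $P\ni x$ with $\fm_R\supsetneq P$ of height at least $2$; such a $V$ lies neither in the strict transform $T^*$ nor above $\fm_R$, and $(F_1\cdot\ldots\cdot F_{d-1}\cdot V)_R$ need not vanish. For instance, with $R=\kk[[u,v,w]]$, blow up $(u,v)$, then the reduced fibre over $\fm_R$, then a closed point $q$ of the resulting divisor $V$ lying over $V(u,v)$; if $G\cong\PP^2$ is the last exceptional divisor and $V'$ the strict transform of $V$, then $G\cdot[V']=[e]$ for a line $e\subset G$ and $(G\cdot G\cdot V')_R=\deg(\mathcal O(G)\otimes\mathcal O_e)=-1$, so taking $F_1=F_2=G$ and the element $u$ the claimed identity acquires an uncancelled term. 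So the decomposition step does require either your ``operative hypothesis'' that $\phi$ is an isomorphism away from $\fm_R$ (true for the normalized blowup in \autoref{Theorem2} and in the applications \autoref{Cor2*} via \autoref{TheoremMultInt}, where in practice one works with blowups of $\fm_R$-primary ideals), or a reading of $[T^*]$ as the entire part of $[\divv(x)]^1$ not supported above $\fm_R$. Under that hypothesis your argument is complete; you are right that this is the one point where the paper's appeal to \autoref{Theorem2} is not automatic.
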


\begin{proof} 
All local rings $\mathcal O_{W,E}$ are valuation rings since $W$ is normal. Therefore, with
  small modifications,  the proof of \autoref{Theorem2} extends to the  situation in this result. 
\end{proof}

\begin{corollary}\label{Cor2*}
Let $(R,\fm_R,\kk)$ be a   $d$-dimensional local domain and $I\subset R$ be an $\fm_R$-primary ideal. Further assume  that $\phi:W\rightarrow \Spec(R)$ is a proper birational morphism such that 
 $W$ is normal, and $\L_W= I\mathcal O_W$ is invertible. Then  for any $0\ne x\in R$,
$$
e(I(R/xR))=\sum_{v\in \Div(\fm_R)}v(x)(\L_W^{d-1}\cdot C(W,v))_{R},
 $$
where the sum is over the  $\fm_R$-valuations   $v\in \Div(\fm_R)$ of $R$.  Here $C(W,v)$ is the center of $v$ on $W$, i.e.,  the integral subscheme  of $W$ that is the closure of the unique point $q$ of $W$ such that the local ring $\mathcal O_{Y,q}$ is dominated by the valuation ring  of $v$.
\end{corollary}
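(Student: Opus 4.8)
The plan is to deduce this from \autoref{Cor2} by specializing all the Cartier divisors to one representing $\L_W$. Since $R$ is a domain and $\phi$ is birational, $W$ is integral, so the invertible sheaf $\L_W=I\mathcal O_W$ is isomorphic to $\mathcal O_W(D)$ for some Cartier divisor $D$ on $W$. Applying \autoref{Cor2} with $F_1=\cdots=F_{d-1}=D$ gives
$$
-(D^{d-1}\cdot T^*)_R=\sum_{E}v_E(x)(\L_W^{d-1}\cdot E)_R,
$$
where $T^*$ is the strict transform of $T=\Spec(R/xR)$ in $W$ and $E$ ranges over the prime divisors of $W$ supported above $\fm_R$. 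It then remains to identify the left-hand side with $e(I(R/xR))$, and to re-express the right-hand sum as a sum over $\Div(\fm_R)$.

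For the left-hand side I would argue exactly as in the proof of \autoref{CorThm2}. The induced morphism $T^*\to T=\Spec(R/xR)$ is proper and birational, and $I\mathcal O_{T^*}$ is invertible, being the restriction to the strict transform of the invertible ideal $\L_W$. Since $\dim(R/xR)=d-1$, \autoref{TheoremMultInt} applied over the $(d-1)$-dimensional local ring $R/xR$ gives $e(I(R/xR))=-((I\mathcal O_{T^*})^{d-1})_{R/xR}$, and by the definition of the intersection product in \autoref{Proddef1} and \autoref{Proddef2} this equals $-(D^{d-1}\cdot T^*)_R$.

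For the right-hand side I would set up the correspondence between prime divisors of $W$ above $\fm_R$ and $\fm_R$-valuations. As $W$ is normal, every prime divisor $E$ above $\fm_R$ has $\mathcal O_{W,E}$ a discrete valuation ring dominating $R$ whose residue field $R(E)$ has transcendence degree $d-1$ over $\kk$; hence $v_E\in\Div(\fm_R)$ and $C(W,v_E)=E$. Conversely, every $v\in\Div(\fm_R)$ dominates $R$ and so has a unique center $C(W,v)$ on $W$, necessarily lying above $\fm_R$, and when this center is a prime divisor one recovers $v=v_{C(W,v)}$. Thus $E\mapsto v_E$ identifies the prime divisors above $\fm_R$ with the elements of $\Div(\fm_R)$ whose center on $W$ is divisorial.

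The step I expect to require the most care is showing that the remaining $\fm_R$-valuations — those whose center $C=C(W,v)$ has codimension at least two in $W$ — contribute nothing, so that enlarging the sum from prime divisors to all of $\Div(\fm_R)$ is harmless. Here I would compute the relative dimension: since $C$ lies above $\fm_R$ one has $\dim_S C=\trdeg(R(C)/\kk)-d$, and because $R(C)$ embeds into the residue field of $v$, which has transcendence degree $d-1$ over $\kk$, a non-divisorial center forces $\trdeg(R(C)/\kk)\leq d-2$ and hence $\dim_S C\leq -2$. Then $[C]\in A_k(W/S)$ with $k\leq -2$, while only $d-1$ Cartier divisors are being intersected; as $d-1>k+d$, the convention in \autoref{Proddef1} yields $(\L_W^{d-1}\cdot C(W,v))_R=0$. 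Combining this vanishing with the two identifications above gives the asserted formula.
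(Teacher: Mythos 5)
Your proposal is correct and follows essentially the same route as the paper: apply \autoref{Cor2} with $F_1=\cdots=F_{d-1}=D$, identify $-(D^{d-1}\cdot T^*)_R$ with $e(I(R/xR))$ via \autoref{TheoremMultInt} applied to $T^*\to T$, and match the prime divisors above $\fm_R$ with the divisorially-centered elements of $\Div(\fm_R)$ using the fact that two dimension-one valuation rings of the quotient field with one dominating the other must coincide. Your explicit verification that valuations with non-divisorial center contribute zero, via $\dim_S C(W,v)\leq -2$ and the vanishing convention in \autoref{Proddef1}, is a point the paper leaves implicit, and it is argued correctly.
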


\begin{proof} Let $T^*$ be the strict transform of $T=\Spec(R/xR)$ in $W$. The induced morphism $T^*\rightarrow T$ is proper and birational and $I\mathcal O_{T^*}$ is invertible. Thus
$$
e(I(R/xR))=-((I\mathcal O_{T^*})^{d-1})_{R/xR}=-((I\mathcal O_W)^{d-1}\cdot T^*)_R
$$
by \autoref{TheoremMultInt}.  
By \autoref{Cor2}, taking $F_i=D$ for $1\ls i\ls d-1$,  where $D$ is a Cartier divisor on $W$ such that $I\mathcal O_W= \mathcal O_W(D)$,
we have that 

$$
-(\mathcal L_W^{d-1}\cdot T^*)_R=\sum_E v_E(x)(\mathcal L_W^{d-1}\cdot E)_R,
$$
where the sum is over the prime  divisors $E$ of $W$ which are supported above $\fm_R$.

%If $v\in \Div(\fm_R)$ and $\dim C(W,v)=d-1$, then $C(W,v)$ is a prime divisor $E$ of $W$ which is supported above $\fm_R$.
% If $\dim C(W,v)<d-1$, then $(\mathcal L_W^{d-1}\cdot C(W,v))_R=0$ by \autoref{Proddef1}. The conclusions %of the corollary now follow.

We must show that the canonical valuations $v_E$ associated to integral components $E$ of $\phi^{-1}(\fm_R)$ such that $\dim(E)=d-1$ are a complete set of representatives of the 
equivalence classes of valuations in $\Div(\fm_R)$ such that $\dim(C(W,v))=d-1$.

We first show that if $v\in \Div(\fm_R)$ is such that $\dim(C(W,v))=d-1$, then $v$ is equivalent to $v_E$ for some integral component $E$ of $\pi^{-1}(\fm_R)$ such that $\dim(E)=d-1$.

Suppose that $v\in \Div(\fm_R)$ and $\dim\left(C(W,v)\right)=d-1$. Then $E=C(W,v)$ is an integral component of $\pi^{-1}(\fm_R)$ with $\dim(E)=d-1$ and $\mathcal O_v$ dominates $\mathcal O_{W,E}$. Since $\mathcal O_{W,E}$ and $\mathcal O_v$ are dimension 1 valuation rings of the quotient field of $R$, we have that $\mathcal O_v=\mathcal O_{W,E}$ by \cite[Theorem VI.3.3]{ZS2}, as in the proof of \autoref{Reesval}. Thus $v$ is equivalent to $v_E$.

Finally, if $E$ is an integral component of $\phi^{-1}(\fm_R)$ such that $\dim(E)=d-1$, then $E$ is a projective $\kk$-variety such that $\trdeg(R(E)/\kk)=\dim(E)=d-1$. Thus $v_E\in \Div(\fm_R)$.  If $E_1$ and $E_2$ are distinct integral subschemes of $\phi^{-1}(\fm_R)$ then $\mathcal O_{W,E_1}\ne \mathcal O_{W,E_2}$ since $\phi$ is proper. Thus if $E_1$ and $E_2$ have dimension $d-1$ we have that $v_{E_1}$ is not equivalent to $v_{E_2}$, finishing the proof.
\end{proof}

\subsection{Local rings}\label{SecLR}
In this subsection we give  proofs of \autoref{genReesThm*}  and \autoref{mostgenReesThm*} and give a geometric interpretation in  \autoref{ThmExc}.

We  require the following two  lemmas giving basic properties of Rees valuations to obtain the full statement of  \autoref{genReesThm*}. If $v$ is a discrete valuation on a local ring $R$ and $n\in \NN$, then define the valuation ideal $I(v)_n=\{x\in R\mid v(x)\gs n\}$.

\begin{lemma}\label{LemmaR1} Let $(R,\fm_R,\kk)$ be a local ring and $I$ be an $\fm_R$-primary ideal. Let $P_1,\ldots,P_r$ be the minimal prime ideals of $R$. Then the set of Rees valuations of $I$ is the union of the sets of Rees valuations of $I(R/P_i)$ for $1\ls i\ls r$.
\end{lemma}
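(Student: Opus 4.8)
The plan is to argue from the valuative description of Rees valuations recorded in \cite[\S 10]{HS}: the Rees valuations of $I$ form the unique irredundant finite collection of divisorial valuations $v$ of $R$ — each living on a quotient $R/P_v$ by the prime $P_v=\{x\in R\mid v(x)=\infty\}$ and satisfying $v(I)<\infty$ — such that
\begin{equation*}
\overline{I^n}=\bigcap_{v}\{x\in R\mid v(x)\gs n\,v(I)\}\qquad\text{for all }n\gs 1 .
\end{equation*}
A first observation is that each $P_v$ is forced to be a minimal prime of $R$, being the contraction of a minimal prime of the normalized extended Rees algebra used to construct $v$ in \cite[\S 10]{HS}. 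Since $I$ is $\fm_R$-primary and $I\not\subseteq P$ for every minimal prime $P$, the ideal $I(R/P)$ is a nonzero proper ideal of the Noetherian domain $R/P$, and so has its own Rees valuations. It is therefore natural to sort the Rees valuations of $I$ by their associated minimal prime, and the content of the lemma is that those sitting over a fixed minimal prime $P_j$ coincide with the Rees valuations of $I(R/P_j)$.

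The bridge between the two sides will be the reduction of integral closure to minimal primes \cite[Proposition 1.1.5]{HS}, which is insensitive to nilpotents: for $x\in R$ one has $x\in\overline{I^n}$ if and only if the image $\bar x$ of $x$ in $R/P_j$ lies in $\overline{(I(R/P_j))^n}$ for every minimal prime $P_j$. For each $j$, let $v_{j,1},\ldots,v_{j,s_j}$ be the Rees valuations of $(R/P_j,\,I(R/P_j))$, set $e_{j,k}=v_{j,k}(I)<\infty$, so that $\overline{(I(R/P_j))^n}=\bigcap_k\{\bar y\mid v_{j,k}(\bar y)\gs n\,e_{j,k}\}$ for all $n$, and let $w_{j,k}$ be the valuation of $R$ obtained by composing $v_{j,k}$ with $R\to R/P_j$; thus $P_{w_{j,k}}=P_j$ and $w_{j,k}(I)=e_{j,k}$. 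Concatenating the two intersection formulas gives immediately
\begin{equation*}
\overline{I^n}=\bigcap_{j,k}\{x\in R\mid w_{j,k}(x)\gs n\,e_{j,k}\}\qquad\text{for all }n\gs 1 ,
\end{equation*}
so the family $\{w_{j,k}\}$ satisfies the defining property of a set of Rees valuations. Valuations coming from distinct minimal primes have distinct infinity-primes, hence the $w_{j,k}$ are pairwise distinct, and by the uniqueness in \cite[\S 10]{HS} it remains only to verify that this family is irredundant.

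I expect irredundancy to be the one genuinely delicate point: the witness for the essentiality of a given $w_{j_0,k_0}$ produced by the domain theory lives in $R/P_{j_0}$ and must be lifted to an element of $R$ that does not accidentally satisfy the failing inequality, nor violate the inequalities imposed by the valuations attached to the other minimal primes. My plan is to manufacture the witness inside $\bigcap_{j\ne j_0}P_{j}$, which maps to a nonzero ideal of $R/P_{j_0}$ because the minimal primes are pairwise incomparable (were $\bigcap_{j\ne j_0}P_j\subseteq P_{j_0}$, primeness of $P_{j_0}$ would force some $P_j\subseteq P_{j_0}$): on such an element every $w_{j,k}$ with $j\ne j_0$ is identically $\infty$, while the behaviour of the $w_{j_0,k}$ is controlled by the domain-level irredundancy of the $v_{j_0,k}$. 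Should this lift prove awkward to control, the clean fallback is to run the argument geometrically on the normalized extended Rees algebra $\overline{R[It,t^{-1}]}$, whose irreducible components are indexed by the minimal primes of $R$ and restrict, on the component over $P_j$, to the normalized extended Rees algebra of $I(R/P_j)$; the height-one primes over $(t^{-1})$ then partition along these components, yielding the desired equality of valuation sets with no redundancy left to check by hand.
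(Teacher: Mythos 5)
Your proposal follows essentially the same route as the paper: reduce integral closures to the minimal primes via \cite[Proposition 1.1.5]{HS}, concatenate the intersection formulas so that the lifted valuations $w_{j,k}$ cut out $\overline{I^n}$, and prove irredundancy using that $\bigcap_{j\ne j_0}P_j\not\subseteq P_{j_0}$. The one step you leave as a plan --- combining an element $z\in\bigcap_{j\ne j_0}P_j\setminus P_{j_0}$ with the domain-level witness $y$ --- is carried out in the paper exactly as you anticipate, by taking $y^Nz$ with $N\gg 0$, so that $w_{j,l}(y^Nz)=\infty$ for $j\ne j_0$ while the normalized values of the $w_{j_0,l}$ on $y^Nz$ approach those on $y$ and the strict inequality survives.
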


\begin{proof} Let $\phi_i:R\rightarrow R/P_i$ be the natural surjections. By \cite[Proposition 1.1.5]{HS}, 
$$\overline{I^n}=\cap_{i=1}^t\phi_i^{-1}(\overline{I^n(R/P_i)})
$$
for all $n$. Let $v_{i,j}$ for $1\ls j\ls r_i$ be the Rees valuations of $I(R/P_i)$, giving irredundant representations
$$
\overline{I^n(R/P_i)} = \cap_{i=1}^{r_i}I(v_{ij})_{ne_{i,j}}.
$$
Identify $v_{i,j}$ with $v_{i,j}\circ \phi_i$ (so that $v_{i,j}(P_i)=\infty)$. Then
$\overline{I^n}=\cap_{i,j}I(v_{i,j})_{ne_{i,j}}$, so we only need to show that this representation is irredundant. To prove this, we use an argument from the proof of \cite[Theorem 4.12]{Re2}.

Fix $i,k$ with $1\ls i\ls r$ and $1\ls k\ls r_i$. To prove irredundancy, we must find $x\in \overline{I^m}$ for some $m$, such that
\begin{equation}\label{eqR1}
\frac{v_{i,k}(x)}{me_{i,k}}<\frac{v_{j,l}(x)}{m_{e_{j,l}}}
\end{equation}
for all $(j,l)\ne (i,k)$.
By definition of Rees valuations, there exists $y\in \overline{I^m}$ such that
\begin{equation}\label{eqR2}
\frac{v_{i,k}(y)}{me_{i,k}}<\frac{v_{i,l}(y)}{me_{i,l}}
\end{equation}
for $l\ne k$. Choose $z\in \cap_{j\ne i}P_j\setminus P_i$. Then for $N>0$, $y^Nz\in \overline{I^m}$.
We compute $v_{j,l}(y^Nz)=\infty$ for $j\ne i$ and
$$
\frac{v_{i,l}(y^Nz)}{me_{i,l}}=N\frac{v_{i,l}(y)}{me_{i,l}}+\frac{v_{i,l}(z)}{me_{i,l}}.
$$
By \autoref{eqR2}, for $N\gg 0$, 
$$
\frac{v_{i,k}(y^Nz)}{me_{i,k}}<\frac{v_{i,l}(y^Nz)}{me_{i,l}}
$$
for $l\ne k$. Taking $x=y^Nz$ with $N\gg 0$, we have that \autoref{eqR1} holds. 
\end{proof}

The following lemma follows from the more general \cite[Theorem 5.3]{KV}. We give a short proof here.

\begin{lemma}\label{LemmaR2}
Let $(R,\fm_R,\kk)$ be a local ring and $I$ be an $\fm_R$-primary ideal. Let $\hat R$ be the $\fm_R$-adic completion of $R$ and $\hat v_i$ for $1\ls i\ls t$ be the Rees valuations of $I\hat R$.  Then $v_i=\hat v_i|R$ are the Rees valuations of $I$.  Further, the value groups and the residue fields of $v_i$ and $\hat v_i$ are naturally isomorphic. 
\end{lemma}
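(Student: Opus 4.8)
The plan is to reduce the comparison of Rees valuations to a comparison of the integral closures of the powers of $I$ and of $I\hat R$, and then to exploit $\fm_R$-adic approximation to transport the irredundancy, the value groups, and the residue fields from $\hat R$ down to $R$. The starting observation is the flat-base-change identity
\[
\overline{I^n\hat R}\cap R=\overline{I^n}\qquad(n\gs 0),
\]
which I would prove directly. The inclusion $\supseteq$ is persistence of integral closure together with $\overline{I^n}\hat R\cap R=\overline{I^n}$ (faithful flatness). For $\subseteq$, given $x\in R$ satisfying an integral equation $x^k+\hat a_1x^{k-1}+\cdots+\hat a_k=0$ with $\hat a_j\in I^{nj}\hat R$, I consider the $R$-linear map $\phi\colon\bigoplus_{j=1}^k (I^n)^j\to R$, $(a_j)\mapsto\sum_j a_jx^{k-j}$; flatness gives $\Image(\phi)\hat R=\Image(\phi\otimes\hat R)$, so $-x^k$ lies in $\Image(\phi)\hat R\cap R=\Image(\phi)$, producing an integral equation for $x$ over $I^n$ \emph{with coefficients in $R$}. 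Thus $x\in\overline{I^n}$.

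Set $v_j=\hat v_j|R$ and let $\hat e_j=\hat v_j(I\hat R)$ be the weights in the irredundant representation $\overline{(I\hat R)^n}=\bigcap_{j=1}^t I(\hat v_j)_{n\hat e_j}$ (see \cite[Ch.~10]{HS}). Since $I(\hat v_j)_m\cap R=I(v_j)_m$, intersecting this representation with $R$ and invoking the displayed identity yields
\[
\overline{I^n}=\bigcap_{j=1}^t I(v_j)_{n\hat e_j}\qquad(n\gs 0).
\]
By the uniqueness of Rees valuations as the minimal set of normalized valuations giving such a representation, the Rees valuations of $I$ form a subset of $\{v_1,\dots,v_t\}$ up to equivalence and normalization; it remains to show that this representation is itself irredundant, so that the two sets coincide, and that the $v_j$ are already normalized.

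The technical heart, which I expect to be the main obstacle, is an $\fm_R$-adic approximation step. Each $\hat v_j$ is $\fm_{\hat R}$-centered: since $\fm_{\hat R}^m=\fm_R^m\hat R\subseteq I\hat R$ for some $m$, we get $m\,\hat v_j(\fm_{\hat R})\gs\hat e_j>0$, so $\hat v_j(\fm_{\hat R}^N)\to\infty$. Hence for any $\hat x\in\hat R$ and $N\gg0$, any $x\in R$ with $x-\hat x\in\fm_{\hat R}^N$ satisfies $\hat v_j(x)=\hat v_j(\hat x)$ whenever $\hat v_j(\hat x)<\hat v_j(\fm_{\hat R}^N)$, and $\hat v_j(x)\gs n\hat e_j$ whenever $\hat v_j(\hat x)\gs n\hat e_j$. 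Applying this to an irredundancy witness $\hat x_k$ for $\hat v_k$, namely an element with $\hat v_j(\hat x_k)\gs n\hat e_j$ for $j\ne k$ but $\hat v_k(\hat x_k)<n\hat e_k$, produces $x_k\in R$ with $v_j(x_k)\gs n\hat e_j$ for $j\ne k$ and $v_k(x_k)<n\hat e_k$. This single element shows simultaneously that the $v_j$ are pairwise inequivalent and that none is redundant in the representation above, so $\{v_1,\dots,v_t\}$ is exactly the set of Rees valuations of $I$. The same approximation, applied to a uniformizer and to residue representatives of $\hat v_j$, shows that the value group of $v_j$ is already all of $\ZZ$, so that $\hat e_j=v_j(I)$ is the correct normalized weight, and that $\kappa(v_j)=\kappa(\hat v_j)$, giving the asserted natural isomorphisms of value groups and residue fields. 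If desired, one may first reduce to the case that $R$ is a domain using \autoref{LemmaR1}, which makes each $v_j$ an honest valuation of $\operatorname{Frac}(R)$ and streamlines the bookkeeping of the minimal primes of $\hat R$ supporting the $\hat v_j$.
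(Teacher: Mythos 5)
Your proposal is correct and follows essentially the same route as the paper's proof: both intersect the irredundant representation $\overline{I^n\hat R}=\bigcap_i I(\hat v_i)_{ne_i}$ with $R$ (using flat base change of integral closure) and then transfer irredundancy witnesses from $\hat R$ to $R$ by $\fm_{\hat R}$-adic approximation, exploiting that each $\hat v_j$ is $\fm_{\hat R}$-centered. The only differences are cosmetic: you prove the base-change identity and the value-group/residue-field statements directly by the same approximation device, where the paper cites \cite[Lemma 9.1.1]{HS} and \cite[Proposition 9.3.5]{HS} respectively.
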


\begin{proof} $\overline{I^n\hat R}=\overline{I^n}\hat R$ by \cite[Lemma 9.1.1]{HS}. By assumption, there exists an irredundant representation
$$
\overline{I^n}\hat R=\cap_{i=1}^tI(\hat v_i)_{ne_i}
$$
for $n\gs 0$. Since $R\rightarrow \hat R$ is faithfully flat, $(\overline I^n\hat R)\cap R=\overline{I^n}$ for all $n$. Thus
$$
\overline{I^n}=\left[\cap_i(I(\hat v_i)_{ne_i})\right]\cap R=\cap_i\left[I(\hat v_i)_{ne_i}\cap R\right]
=\cap_iI(v_i)_{ne_i}.
$$
We now prove irredundancy. Fix $i$ with $1\ls i\ls t$. By assumption, there exists $y\in \overline{I^m}\hat R$ for some $m$ such that 
$$
\frac{\hat v_i(y)}{me_i}<\frac{\hat v_j(y)}{me_j}
$$
if $j\ne i$. There exists $N>0$ such that $N\hat v_j(\fm_{\hat R})>\hat v_j(y)$ for $1\ls j\ls t$, and there exists $z\in R$ such that $z-y\in \fm_{\hat R}^N$. Thus $v_j(z)=\hat v_j(z)=\hat v_j(y)$ for all $j$, which implies that
$$
\frac{v_i(z)}{me_i}<\frac{v_j(z)}{me_j}
$$
for $j\ne i$.
The statement about equality of value groups and residue fields follows from \cite[Proposition 9.3.5]{HS}.
\end{proof}

We now give a proof of  \autoref{genReesThm*}. We proceed as in \cite[\S 3]{Re}. Let $P_i$ be the minimal primes of $\hat R$, indexed so that $\dim( \hat R/P_i)=d$ for $1\ls i\ls s$ and $\dim(\hat R/P_i)<d$ if $s<i$. Let $\hat v_{i,j}$ be the Rees valuations of $I(\hat R/P_i)$.
We have that $e(I(R/xR))=e(I(\hat R/x\hat R))$, where $\hat R$ is the $\fm_R$-adic completion of $R$. By  \autoref{ReesLemma},
\begin{equation}\label{eqR3}
e(I(\hat R/x\hat R))=\sum_{1\ls i\ls s} \ell_{\hat R_{P_i}}(\hat R/P_i)d_{I(\hat R/P_i)}(x),
\end{equation}
where the $P_i$ are the minimal prime ideals of $\hat R$ such that $\dim(\hat R/P_i)=d$. The ring $\hat R/P_i$ is analytically irreducible since it is a complete domain, so we have expressions for $1\ls i\ls s$,
\begin{equation}\label{eqR4}
d_{I(\hat R/P_i)}(x)=\sum_jd_{ij}\hat v_{i,j}(x)
\end{equation}
by  \autoref{ReesThm*} or by  \autoref{CorThm2}. Set $d_{i,j}=0$ for $i>s$.
Combining \autoref{eqR3} and \autoref{eqR4}, we obtain an expansion of 
$d_I(x)$ in terms of the $\hat{v}_{i,j}$.
By  \autoref{LemmaR1}, the union of all the  $\hat v_{i,j}$ is the set of Rees valuations of $I\hat R$,  and by  \autoref{LemmaR2}, the $\hat v_{i,j}$ are the extensions of the Rees valuations of $I$ to $\hat R$.

We have the following geometric interpretation of  \autoref{genReesThm*}.  Excellent local rings satisfy the assumptions of the theorem by  \cite[Scholie IV.7.8.3]{EGAIV}.  

\begin{theorem}\label{ThmExc} Let  $(R,\fm_R,\kk)$ be a $d$-dimensional  local ring such that $R/P$ is analytically unramified  for every  prime ideal $P$ such that $\dim(R/P)=d$. Let $I$ be is an $\fm_R$-primary ideal such that $\oplus_{n\gs 0}\overline {I^n}$ is a finitely generated $R$-algebra.
Let $\pi:\Proj(\oplus_{n\gs 0}\overline{I^n})\rightarrow \Spec(R)$ be the natural projective morphism and set $\mathcal L_Y=I\mathcal O_Y$.
Then the degree function $d_I: R\setminus\{0\}\to\NN$ given by $d_I(x)=e(I(R/xR))$   for  $x\in R$ such that $\dim (R/xR)=d-1$
 and $x$ is a nonzero divisor on $R$ can be expressed as
$$
d_I(x)=\sum_va_v(\mathcal L_Y^{d-1}\cdot C(Y,v))_Rv(x),
$$
where the sum is over the Rees valuations v of $I$. Here $C(Y,v)$ is the center of $v$ on $Y$, i.e.,  the integral subscheme  of $Y$ that is the closure of the unique point $q$ of $Y$ such that the local ring $\mathcal O_{Y,q}$ is dominated by the valuation ring  of $v$ and   $a_v=\ell_{R_{P_v}}(R_{P_v})$
with $P_v=\{x\in R\mid v(x)=\infty\}$.
\end{theorem}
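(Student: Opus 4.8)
The plan is to reduce the statement to the case of an analytically unramified local \emph{domain}, where \autoref{CorThm2} already supplies the desired geometric formula, and then to transfer the resulting intersection numbers from the auxiliary normalized blowups back to the single scheme $Y$.

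First I would apply \autoref{ReesLemma} to $R$ itself. For a nonzero divisor $x$ with $\dim(R/xR)=d-1$ this gives
$$
d_I(x)=e(I(R/xR))=\sum_{i=1}^s\ell_{R_{P_i}}(R_{P_i})\,d_{I(R/P_i)}(x),
$$
where $P_1,\dots,P_s$ are the minimal primes of $R$ with $\dim(R/P_i)=d$. Each $R/P_i$ is an analytically unramified local domain by hypothesis, and each Rees valuation $v$ of $I(R/P_i)$ (extended to $R$) satisfies $P_v=P_i$, so the coefficient $\ell_{R_{P_i}}(R_{P_i})$ is already exactly $a_v$. This is the point where working over $R$ directly, rather than over its completion as in the proof of \autoref{genReesThm*}, lets the multiplicity $a_v$ appear with no further length comparison.

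Next I would apply \autoref{CorThm2} to each domain $R/P_i$, with $Y_i'=\Proj(\oplus_{n\gs0}\overline{(I(R/P_i))^n})$ its normalized blowup (finitely generated because $R/P_i$ is analytically unramified). This writes $d_{I(R/P_i)}(x)=\sum_E v_E(x)\,((I\mathcal O_{Y_i'})^{d-1}\cdot E)_{R/P_i}$, summed over the integral components $E$ of the fibre over $\fm_{R/P_i}$, whose canonical valuations are the Rees valuations of $I(R/P_i)$. By \autoref{LemmaR1} the Rees valuations of $I$ are the irredundant union of those of the various $I(R/P_i)$, and only the $\fm_R$-valuations—those coming from the top-dimensional $P_i$ and having a center of dimension $d-1$—produce a nonzero number; for the other Rees valuations the center has smaller dimension and $(\mathcal L_Y^{d-1}\cdot C(Y,v))_R=0$, consistent with \autoref{mval}. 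Substituting and regrouping by Rees valuation yields a formula of the asserted shape, with coefficients $a_v\,((I\mathcal O_{Y_i'})^{d-1}\cdot E)_{R/P_i}$.

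It then remains to replace the number on the auxiliary $Y_i'$ by the one on $Y$ itself, i.e. to prove
$$
((I\mathcal O_{Y_i'})^{d-1}\cdot E)_{R/P_i}=(\mathcal L_Y^{d-1}\cdot C(Y,v))_R
$$
for $v=v_E$. There is a canonical morphism from $Y_i'$ to the component of $Y$ dominating $R/P_i$, under which $E$ maps onto $C(Y,v)$, and $I\mathcal O_Y$ pulls back to $I\mathcal O_{Y_i'}$; the projection formula and birational invariance of the product (parts of \autoref{Prop2.3} and \autoref{ThmR1}, with \autoref{AssocForm} used to isolate the single component, the integral cycle $[E]$ carrying multiplicity one) reduce the identity to the assertion that this morphism is birational onto the center, i.e. that $[\kappa(v):R(C(Y,v))]=1$. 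This equality of residue fields is the step I expect to be the main obstacle: $Y$ is the $\Proj$ of the normalized Rees algebra of $I$ and over a non-reduced $R$ it need not be normal along its exceptional divisors, so a priori a residue-field degree could intervene. I would dispose of it by observing that the function field of the center of $v$ on any blowup model of $I$ is generated over $\kk$ by the same ratios of elements of $I$ that generate the residue field $\kappa(v)$ of the Rees valuation, forcing $R(C(Y,v))=\kappa(v)$. Once this degree is shown to be $1$, the displayed identity holds; combining it with the two reductions above gives the theorem, and uniqueness of the coefficients \cite[Theorem 9.42]{Re2} confirms that these are the coefficients appearing in \autoref{genReesThm*}.
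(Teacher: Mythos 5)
Your overall strategy coincides with the paper's: reduce via \autoref{ReesLemma} to the top-dimensional minimal primes $P_i$, apply \autoref{CorThm2} to each analytically unramified domain $R/P_i$, identify the Rees valuations via \autoref{LemmaR1}, and then compare the intersection numbers computed on the normalized blowups $Y_i'=\Proj(\oplus_{n\gs 0}\overline{I^n(R/P_i)})$ with those on $Y$. You also correctly isolate the crux, namely the identity $((I\mathcal O_{Y_i'})^{d-1}\cdot E)_{R/P_i}=(\mathcal L_Y^{d-1}\cdot C(Y,v))_R$, which by the projection formula comes down to showing $\deg(E/C(Y,v))=[R(E):R(C(Y,v))]=[\kappa(v):R(C(Y,v))]=1$.

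However, your proposed justification of that degree-one claim is the genuine gap. The assertion that ``the function field of the center of $v$ on any blowup model of $I$ is generated over $\kk$ by the same ratios of elements of $I$ that generate $\kappa(v)$'' is false for general blowup models: take $R=\kk[[x,y]]$ with $\operatorname{char}\kk\neq 2$ and $I=(x^2,y^2)$, whose unique Rees valuation is the order valuation with residue field $\kk(y/x)$; on the (non-normalized) blowup of $I$ the residue field of its center is $\kk\big((y/x)^2\big)$, a proper subfield, so a residue-degree really can intervene. The reason no such degree appears for the particular $Y=\Proj(\oplus_{n\gs 0}\overline{I^n})$ of the theorem is the compatibility $\overline{I^n}\,(R/P_i)=\overline{I^n(R/P_i)}$ for minimal primes $P_i$ (\cite[Proposition 1.1.5]{HS}). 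This identifies $Y_i'$ with the strict transform of $\Spec(R/P_i)$ in $Y$, i.e., it exhibits your ``canonical morphism'' $\lambda_i:Y_i'\rightarrow Y$ as a \emph{closed immersion}. Then $\lambda_i$ maps $E$ isomorphically onto $C(Y,v)$, so $R(C(Y,v))=R(E)=\kappa(v)$ and $(\lambda_i)_*[E]=[C(Y,v)]$ with multiplicity one, and part (c) of \autoref{Prop2.3} finishes the comparison. (Your earlier steps, including the observation that terms coming from Rees valuations whose center has dimension less than $d-1$ contribute $0$, are fine.) So the argument is repairable, but the specific mechanism you need is the closed-immersion identification via \cite[Proposition 1.1.5]{HS}, not the ``same generating ratios'' heuristic.
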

\begin{proof} Let $P_1,\ldots,P_s$ be the minimal prime ideals of $R$ such that $\dim(R/P_i)=d$. 
We have that $\overline{I^n}(R/P_i)=\overline{I^n(R/P_i)}$ for all $n$ by \cite[Proposition 1.1.5]{HS}. Thus the strict transform $Y_i$ of $\Spec(R/P_i)$ in $Y$ is $Y_i=\Proj(\oplus_{n\gs 0}\overline{I^n(R/P_i))}$. We have  natural projective birational morphisms $\pi_i:Y_i\rightarrow \Spec(R/P_i)$, and closed immersions $\lambda_i:Y_i\rightarrow Y$.  Then $\mathcal L_{Y_i}:=\lambda_i^*\mathcal L_Y\cong I\mathcal O_{Y_i}$ for every $i$ and, for a Rees valuation $v$ of $I(R/P_i)$ we have  $(\lambda_i)_*C(Y_i,v)=C(Y,v)$. Thus
\begin{equation}\label{eqR6}
(\mathcal L_{Y_i}^{d-1}\cdot C(Y_i,v))_{R/P_i}=(\mathcal L_Y^{d-1}\cdot C(Y,v))_R
\end{equation}
by part (c) of \autoref{Prop2.3} and \autoref{Proddef1}.
The theorem now follows from \autoref{eqR6},  \autoref{ReesLemma},  \autoref{CorThm2}, and   \autoref{LemmaR1}.
\end{proof}

We now give a proof of  \autoref{mostgenReesThm*}, as a consequence of \autoref{ReesThm*}.  The original proof in \cite{Re2} is part of a general theory developed in the course of nine chapters. Since it is somewhat difficult  to extract the key elements of this proof  from \cite{Re2}, we include below a condensed version of it  for the reader's convenience.

If $(A,\fm_A)$ is a local ring, $J$ is an $\fm_A$-primary ideal, and $N$ is a finitely generated $A$-module, then
$e_J(N)$ will denote the multiplicity of $N$ with respect to $J$. 

We extend the definition of the degree function $d_I(x)$ to finitely generated modules $M$ over $R$ by defining
$$
d_I(M,x)=e_{I(R/xR)}(M/xM)-e_{I(R/xR)}(0:_Mx)
$$
for $x\in R$ such that $\dim(R/xR)=d-1$.

In \cite[Lemma 9.11]{Re2}, it is shown that for fixed $x$ such that $\dim(R/xR)=d-1$, $d_I(M,x)$ is an additive functor on the category of finitely generated $R$-modules, which assumes only nonnegative integers as values. Furthermore, if $P$ is a prime ideal of $R$ such that $\dim(R/P)<d$, then $d_I(R/P,x)=0$.

Let $M=\hat R$ be the $\fm_R$-adic completion of $R$, and let 
$$
M=M_0\supset M_1\supset \cdots\supset M_n=(0)
$$
be a filtration of $M$ in which $M_{i-1}/M_i\cong \hat R/P_i$ for some prime ideal $P_i$ of $\hat R$ for all $i$. For a minimal prime $P$ of $\hat R$, let $\tau(P)$ be the number of $i$ such that $P_i=P$. Using the facts from \cite[Lemma 9.11]{Re2} stated above, we have that
$$
d_I(x)=d_{I\hat R}(x)=\sum_P \tau(P)d_{I(\hat R/P)}(x),
$$
where the sum is over the minimal primes $P$ of $\hat R$ such that $\dim(\hat R/P)=d$. Now apply  \autoref{ReesThm*} to compute $d_{I(\hat R/P)}(x)$ if $\dim (\hat R/P)=d$ and then use  \autoref{LemmaR2} and \autoref{LemmaR1}, defining $d_i(I)=0$ if $\dim (\hat R/P_i)<d$, to obtain the conclusions of  \autoref{mostgenReesThm*}.

Now we give the proof of  \autoref{mval}. We first prove the proposition in the case that $R$ is an analytically unramified local domain.  The following facts are proven in \autoref{Reesval} and
\autoref{CorThm2}. 
The morphism $\pi:Y=\Proj(\oplus_{n\gs 0}\overline{I^n})\rightarrow 
\Spec(R)$ is a birational projective morphism. The Rees valuations $v_i$ of $I$ are in one-to-one correspondence with the integral components $E_i$ of $\pi^{-1}(\fm_R)$. Each $\mathcal O_{Y,E_i}$ is a discrete valuation ring and $v_i$ is equivalent to the canonical valuation $v_{E_i}$ of $\mathcal O_{Y,E_i}$.
Further, $d_i(I)=(\mathcal L_Y^{d-1}\cdot E_i)_R$, where $\mathcal L_Y=I\mathcal O_Y$.

For each $i$, we have that $\dim(E_i)\ls d-1$ and $\dim(E_i)=d-1$ if and only if $v_i$ is an $\fm_R$-valuation, since $\dim(E_i)= \trdeg(R(E_i)/\kk)$  as $E_i$ is a projective $\kk$-variety. We have that
$(\mathcal L_Y^{d-1}\cdot E_i)_R=0$ if $\dim(E_i)<d-1$. Now $\mathcal L_Y$ is ample on $Y$  (by \cite[Exercise III.5.7 (d)]{H}), since it is the pullback of the ample divisor $I\mathcal O_X$ on $X=\Proj(\oplus_{n\gs 0}I^n)$ and $Y\rightarrow X$ is a finite surjective morphism. Thus $\mathcal L_Y\otimes\mathcal O_{E_i}$ is ample on $E_i$. We have that  $(\mathcal L_Y^{d-1}\cdot E_i)_R=((\mathcal L_Y\otimes E_i)^{d-1})_{\kk}>0$ if $\dim(E_i)=d-1$, see for instance  \cite[Theorem III.1.1]{Kl}. This completes the proof of \autoref{mval} when $R$ is an analytically unramified local domain.  

To see that the conclusion of the proposition holds under the assumptions of  \autoref{genReesThm*} or  \autoref{mostgenReesThm*}, we note that by \autoref{LemmaR1} and \autoref{LemmaR2},  the Rees valuations $\hat v_{i,j}$ of $I{\hat R/P_i}$, where $P_i$ are the minimal primes of $\hat R$, restrict to the Rees valuations $v_{i,j}$ of $I$. 
Thus the conclusion follows from the first part of the proof.

\section{Mixed multiplicities of ideals as intersection products} \label{SecMix}

Let $(R,\fm_R,\kk)$ be a $d$-dimensional local ring  and   $I_1,\ldots, I_r$ be $\fm_R$-primary ideals of $R$. It follows from the theory of multigraded Hilbert polynomials that there exists a polynomial 
$P_{I_\bullet}(t_1,\ldots, t_r)\in \QQ[t_1,\ldots, t_r]$ of degree $d$ such  that  
\begin{equation}\label{eq:mult_Hilb_Pol}
P_{I_\bullet}(n_1,\ldots, n_r)=\ell_R(R/I_1^{n_1}\ldots I_r^{n_r})
\quad \text{for integers} \quad
n_1,\ldots, n_r\gg 0,
\end{equation}
see \cite[\S17.4]{HS},  \cite{Bh}, or \cite[\S1.2]{T}. Moreover, the degree $d$ homogeneous term of $P_{I_\bullet}(t_1,\ldots, t_r)$ can be written as 
\begin{equation}\label{eq:mixed_mult}
\sum_{v_1+\cdots+v_r=d}\frac{e(I_1^{[v_1]},\ldots, I_r^{[v_r]})}{v_1!\cdots v_r!}t_1^{v_1}\cdots t_r^{v_r},
\end{equation}
with nonnegative integers $e(I_1^{[v_1]},\ldots, I_r^{[v_r]})$ called the {\it mixed multiplicities} of $I_1,\ldots, I_r$. If $v_i=1$ we denote $I_i^{[1]}$ by $I_i$.

The following well-known proposition offers a convenient way of expressing mixed multiplicities. Here, for $v_1+\cdots +v_r=d$ we use the standard notation $\binom{d}{v_1,\ldots, v_r}=\frac{d!}{v_1!\cdots v_r!}$.

\begin{proposition}\label{prop:mixed_as_HS}
Let $(R,\fm_R,\kk)$ be a $d$-dimensional local ring  and   $I_1,\ldots, I_r$ be $\fm_R$-primary ideals of $R$.
For any $n_1,\ldots, n_r\in \ZZ_{>0}$ we have
$$
e(I_1^{n_1}\cdots I_r^{n_r}) = \sum_{\stackrel{v_1,\ldots, v_r\in \NN,}{v_1+\cdots+v_r=d}}\binom{d}{v_1,\ldots, v_r}e(I_1^{[v_1]},\ldots, I_r^{[v_r]})n_1^{v_1}\cdots n_r^{v_r}.
$$
\end{proposition}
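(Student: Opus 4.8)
The plan is to reduce the statement to the definition of the Hilbert--Samuel multiplicity by restricting the multigraded Hilbert polynomial to a ray. Fix $n_1,\ldots,n_r\in\ZZ_{>0}$ and set $J=I_1^{n_1}\cdots I_r^{n_r}$, so that $J^m=I_1^{mn_1}\cdots I_r^{mn_r}$ for every $m\geq 0$. Since each $n_i$ is positive, the arguments $mn_i$ all tend to infinity with $m$, so by \autoref{eq:mult_Hilb_Pol} we have $\ell_R(R/J^m)=P_{I_\bullet}(mn_1,\ldots,mn_r)$ for all $m\gg 0$.

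First I would regard $q(m):=P_{I_\bullet}(mn_1,\ldots,mn_r)$ as a polynomial in the single variable $m$. Because $P_{I_\bullet}$ has total degree $d$, the polynomial $q(m)$ has degree at most $d$, and the coefficient of $m^d$ is obtained precisely by substituting $t_i=n_i$ into the degree-$d$ homogeneous component of $P_{I_\bullet}$; every monomial of $P_{I_\bullet}$ of total degree $<d$ contributes only to strictly lower powers of $m$. By \autoref{eq:mixed_mult}, this leading coefficient equals
$$
\sum_{\stackrel{v_1,\ldots,v_r\in\NN,}{v_1+\cdots+v_r=d}}\frac{e(I_1^{[v_1]},\ldots,I_r^{[v_r]})}{v_1!\cdots v_r!}\,n_1^{v_1}\cdots n_r^{v_r}.
$$

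It then remains to feed this into the definition of multiplicity. Since $J$ is $\fm_R$-primary and $\ell_R(R/J^m)=q(m)$ for $m\gg0$, the definition of $e(J)$ recalled in the introduction gives
$$
e(J)=\lim_{m\to\infty}d!\,\frac{\ell_R(R/J^m)}{m^d}=d!\cdot\bigl(\text{leading coefficient of }q\bigr).
$$
Combining the two displays and using the identity $\binom{d}{v_1,\ldots,v_r}=\tfrac{d!}{v_1!\cdots v_r!}$ yields the asserted formula. No step here is a serious obstacle; the only point requiring care is the observation that positivity of the $n_i$ is exactly what makes \autoref{eq:mult_Hilb_Pol} applicable along the ray $m\mapsto(mn_1,\ldots,mn_r)$, and that extracting the top-degree coefficient of $q$ isolates the mixed-multiplicity part of $P_{I_\bullet}$ rather than any lower-order contribution.
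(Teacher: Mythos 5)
Your argument is correct and is essentially the same as the paper's own (much terser) proof: both restrict the multigraded Hilbert polynomial of \autoref{eq:mult_Hilb_Pol} along the ray $m\mapsto(mn_1,\ldots,mn_r)$, identify the leading coefficient with the degree-$d$ homogeneous part \autoref{eq:mixed_mult} evaluated at $(n_1,\ldots,n_r)$, and apply the limit definition of $e(I_1^{n_1}\cdots I_r^{n_r})$. You have simply written out the details that the paper leaves implicit.
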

\begin{proof}
The conclusion follows from \autoref{eq:mult_Hilb_Pol}, \autoref{eq:mixed_mult}, and the definition of Hilbert-Samuel multiplicity 
$$
e(I_1^{n_1}\cdots I_r^{n_r})=
\lim_{n\to \infty}\frac{\ell_R(R/I_1^{n n_1}\ldots I_r^{n n_r})d!}{n^d}.
$$
\end{proof}

In the following theorem we provide a formulation of the mixed multiplicities of ideals in terms of intersection theory.

\begin{theorem}\label{thm:MixedMult}
Let $(R,\fm_R,\kk)$ be a $d$-dimensional local ring  and   $I_1,\ldots, I_r$ be $\fm_R$-primary ideals of $R$. Let  $\pi:Y\rightarrow \Spec(R)$ be a proper birational morphism such that $I_i\mathcal O_Y$ is locally principal for every $i$. 
 Then for every $v_1,\ldots, v_r\in \NN$ such that $v_1+\cdots+v_r=d$ we have
$$ 
e(I_1^{[v_1]},\ldots, I_r^{[v_r]})=-\big((I_1\mathcal O_Y)^{v_1}\cdot\ldots\cdot(I_r\mathcal O_Y)^{v_r}\big)_R.
$$
\end{theorem}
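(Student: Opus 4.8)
The plan is to reduce the statement to the single-ideal Ramanujam formula \autoref{TheoremMultInt}, applied to the products $I_1^{n_1}\cdots I_r^{n_r}$, and then to isolate the individual mixed terms by comparing two polynomial expressions in $n_1,\dots,n_r$. First I would fix integers $n_1,\dots,n_r\in\ZZ_{>0}$ and set $J=I_1^{n_1}\cdots I_r^{n_r}$, which is again $\fm_R$-primary. Since each $I_i\mathcal O_Y$ is locally principal, the ideal sheaf $J\mathcal O_Y=(I_1\mathcal O_Y)^{n_1}\cdots(I_r\mathcal O_Y)^{n_r}$ is locally principal; it is moreover invertible because $I_i$ is $\fm_R$-primary, so each $I_i\mathcal O_Y$ restricts to the unit ideal at every generic point of $Y$ and its local generators are nonzerodivisors. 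Hence \autoref{TheoremMultInt} applies and gives
$$
e(I_1^{n_1}\cdots I_r^{n_r})=-\big((J\mathcal O_Y)^{d}\big)_R=-\big(\big((I_1\mathcal O_Y)^{n_1}\otimes\cdots\otimes(I_r\mathcal O_Y)^{n_r}\big)^{d}\big)_R .
$$

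Then I would expand the right-hand side by multilinearity. Each $I_i\mathcal O_Y$ is the ideal sheaf of an effective Cartier divisor supported on $\pi^{-1}(\fm_R)$ (again because $I_i$ is $\fm_R$-primary), so all the divisors entering the product have support above $\fm_R$, and \autoref{CorR18} guarantees that $(-)_R$ is a symmetric, $\ZZ$-multilinear form in these arguments, with the tensor product of invertible sheaves playing the role of the group operation. Applying the multinomial theorem for symmetric multilinear forms to the $d$-fold self-intersection of $(I_1\mathcal O_Y)^{n_1}\otimes\cdots\otimes(I_r\mathcal O_Y)^{n_r}$, I obtain
$$
e(I_1^{n_1}\cdots I_r^{n_r})=-\sum_{v_1+\cdots+v_r=d}\binom{d}{v_1,\dots,v_r}\,n_1^{v_1}\cdots n_r^{v_r}\,\big((I_1\mathcal O_Y)^{v_1}\cdot\ldots\cdot(I_r\mathcal O_Y)^{v_r}\big)_R .
$$

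On the other hand, \autoref{prop:mixed_as_HS} expresses the same number $e(I_1^{n_1}\cdots I_r^{n_r})$ as
$$
\sum_{v_1+\cdots+v_r=d}\binom{d}{v_1,\dots,v_r}\,e(I_1^{[v_1]},\dots,I_r^{[v_r]})\,n_1^{v_1}\cdots n_r^{v_r}.
$$
Both identities hold for every $(n_1,\dots,n_r)\in\ZZ_{>0}^{r}$, so the two degree-$d$ polynomials in $n_1,\dots,n_r$ agree on a Zariski-dense set and are therefore equal. Comparing the coefficient of $n_1^{v_1}\cdots n_r^{v_r}$ and cancelling the nonzero multinomial factor yields $e(I_1^{[v_1]},\dots,I_r^{[v_r]})=-\big((I_1\mathcal O_Y)^{v_1}\cdot\ldots\cdot(I_r\mathcal O_Y)^{v_r}\big)_R$, which is the claim.

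The step I expect to be the main obstacle is not any single computation but the clean justification of the multinomial expansion: one must know that $(-)_R$ depends only on the isomorphism classes of the invertible sheaves (so that $(I_i\mathcal O_Y)^{n_i}$ may legitimately be read as $n_i$ copies of one class) and that the hypothesis of \autoref{CorR18}, namely support above $\fm_R$, is stable under forming tensor products of these sheaves. Both points are straightforward here but should be verified explicitly; with them in place, the applicability of \autoref{TheoremMultInt} and the final coefficient comparison are purely formal.
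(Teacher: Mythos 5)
Your proposal is correct and follows essentially the same route as the paper: apply \autoref{TheoremMultInt} to $I_1^{n_1}\cdots I_r^{n_r}$, expand by the multilinearity and symmetry of $(-)_R$ from \autoref{CorR18}, and compare coefficients with the polynomial of \autoref{prop:mixed_as_HS}. The extra care you take in checking invertibility of $J\mathcal O_Y$ and the stability of the support condition is a reasonable elaboration of details the paper leaves implicit, not a different argument.
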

\begin{proof} We have for all $n_1,\ldots,n_r\in\NN$ that
$$
e(I_1^{n_1}\cdots I_r^{n_r})=-((I_1^{n_1}\cdots I_r^{n_r}\mathcal O_Y)^d)_R
=
-\sum_{\stackrel{v_1,\ldots, v_r\in \NN,}{v_1+\cdots+v_r=d}}\binom{d}{v_1,\ldots, v_r}
((I_1\mathcal O_Y)^{v_1}\cdot\ldots\cdot (I_r\mathcal O_Y)^{v_d})_R
n_1^{v_1}\cdots n_r^{v_r}
$$
By \autoref{TheoremMultInt} and  \autoref{CorR18}. We obtain the conclusions of the theorem by comparing coefficients in the above polynomial and the polynomial of  \autoref{prop:mixed_as_HS}.

\end{proof}

\begin{remark}The conclusion of \autoref{thm:MixedMult}  is stated in \cite[Section 1.6B]{Laz1} under the assumption that  $R$ is the local ring of a closed point on a finite type scheme over an algebraically closed field. Related formulas  appear in \cite{KT}. 
\end{remark}

As a consequence of \autoref{thm:MixedMult} and  \autoref{CorR18}, we obtain the following identity for mixed multiplicities of ideals in  arbitrary local rings. 
Under more assumptions on $R$, a related formula appeared in  \cite[Theorem 8.1]{KK}.

\begin{corollary}\label{cor:Ident_Mixed_M}
Let $(R,\fm_R,\kk)$ be a $d$-dimensional local ring,   and let  $I$ and $J$ be $\fm_R$-primary ideals of $R$. For every $v_1,\ldots, v_r\in \NN$ such that $v_1+\cdots+v_r=d$ we have
$$ 
e\big((IJ)^{[v_1]},I_2^{[v_2]}\ldots, I_r^{[v_r]}\big)=
\sum_{i=0}^{v_1}\binom{v_i}{i}
e\big(I^{[i]},J^{[v_1-i]},I_2^{[v_2]},\ldots, I_r^{[v_r]}\big).
$$
In particular, if $v_1=1$ 
then 
$
e\big(IJ,I_2^{[v_2]}\ldots, I_r^{[v_r]}\big)=
e\big(I,I_2^{[v_2]},\ldots, I_r^{[v_r]}\big)
+
e\big(J,I_2^{[v_2]},\ldots, I_r^{[v_r]}\big).
$
\end{corollary}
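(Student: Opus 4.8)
The plan is to pass to a single birational model on which all the ideals in sight simultaneously become locally principal, and then to expand a binomial in the multilinear intersection product, applying \autoref{thm:MixedMult} twice.

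First I would produce a proper birational morphism $\pi\colon Y\rightarrow\Spec(R)$ such that each of $I\mathcal O_Y$, $J\mathcal O_Y$, and $I_2\mathcal O_Y,\ldots,I_r\mathcal O_Y$ is locally principal. Such a $Y$ exists by a routine iterated-blowup construction: blow up $I$, then blow up the extension of $J$, and successively the extensions of $I_2,\ldots,I_r$; since blowing up an already invertible ideal is an isomorphism, every ideal made invertible at an earlier stage stays invertible, and the composite $Y\rightarrow\Spec(R)$ is proper and birational. On this $Y$ the product ideal $(IJ)\mathcal O_Y=(I\mathcal O_Y)(J\mathcal O_Y)$ is then also locally principal. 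Because each of $I,J,I_2,\ldots,I_r$ is $\fm_R$-primary, the associated Cartier divisors all have support contained in $\pi^{-1}(\fm_R)$, i.e. above $\fm_R$, so every intersection product appearing below is defined by \autoref{Proddef2} and \autoref{CorR18} applies.

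The central observation is that if $A=I\mathcal O_Y$ and $B=J\mathcal O_Y$ denote the invertible sheaves attached to $I$ and $J$, with corresponding Cartier divisors $D_I,D_J$, then the invertible sheaf attached to $IJ$ is $A\otimes B$, equivalently the Cartier divisor $D_I+D_J$, since a local generator of $IJ$ is the product of local generators of $I$ and $J$. Writing $C_k=I_k\mathcal O_Y$, \autoref{thm:MixedMult} applied to the ideal $IJ$ gives
$$e\big((IJ)^{[v_1]},I_2^{[v_2]},\ldots,I_r^{[v_r]}\big)=-\big((A\otimes B)^{v_1}\cdot C_2^{v_2}\cdot\ldots\cdot C_r^{v_r}\big)_R.$$
Using the identification $A\otimes B\leftrightarrow D_I+D_J$ together with the symmetry and multilinearity of the intersection product from \autoref{CorR18}, the binomial theorem yields
$$\big((A\otimes B)^{v_1}\cdot C_2^{v_2}\cdot\ldots\cdot C_r^{v_r}\big)_R=\sum_{i=0}^{v_1}\binom{v_1}{i}\big(A^{i}\cdot B^{v_1-i}\cdot C_2^{v_2}\cdot\ldots\cdot C_r^{v_r}\big)_R.$$
Applying \autoref{thm:MixedMult} once more, now to the collection $I^{[i]},J^{[v_1-i]},I_2^{[v_2]},\ldots,I_r^{[v_r]}$, identifies each summand $-\big(A^{i}\cdot B^{v_1-i}\cdot C_2^{v_2}\cdot\ldots\cdot C_r^{v_r}\big)_R$ with $e\big(I^{[i]},J^{[v_1-i]},I_2^{[v_2]},\ldots,I_r^{[v_r]}\big)$, and summing produces the claimed identity; the special case $v_1=1$ is immediate. (I note that the exponent $v_i$ in the binomial coefficient of the statement is a typo for $v_1$.)

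The only genuinely substantive point is the compatibility $(IJ)\mathcal O_Y=(I\mathcal O_Y)(J\mathcal O_Y)$ and its translation into the additivity $D_{IJ}=D_I+D_J$ of Cartier divisors; everything else is bookkeeping with the already-established bilinearity. I expect no real difficulty here, but some care is warranted because $Y$ need not be integral, so I would establish the divisor additivity directly at the level of the local defining data $(U_i,f_i)$ rather than invoking the integral-scheme identification of invertible sheaves with Cartier divisors.
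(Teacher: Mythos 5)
Your proposal is correct and follows exactly the route the paper intends: the paper gives no separate proof of this corollary but derives it precisely from \autoref{thm:MixedMult} together with the multilinearity and symmetry of \autoref{CorR18}, which is what you do (and you rightly flag that $\binom{v_i}{i}$ is a typo for $\binom{v_1}{i}$). The only addition you make is spelling out the existence of a common model $Y$ and the additivity $D_{IJ}=D_I+D_J$, both of which are routine and consistent with the paper's conventions.
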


\subsection{The semigroup and group of integrally closed ideals on a $2$-dimensional local ring}\label{SecSemi}
Let $(R,m_R,\kk)$ be a $2$-dimensional  local ring. The semigroup and group of integrally closed ideals of $R$ are defined on  \cite[p.461]{ReSh}. The multiplicative semigroup of $\fm_R$-primary ideals of $R$ has the congruence relation $\sim$ defined by $I\sim J$ if $I$ and $J$ have the same integral closure. Let $S(R)$ be the semigroup which is the quotient of the multiplicative semigroup by $\sim$. 

For $R$  normal, we say  that an $\m_R$-primary ideal $I$ is {\it $*$-simple} if the integral closure $\overline{I}$ of $I$ satisfies  that $\overline I\ne \overline{JK}$ for any two proper ideals $J$ and $K$ of $R$. This gives a notion of an irreducible element in $S(R)$. We note that
 mild assumptions on the structure of  $S(R)$ translate to mild  singularities of $R$. 
For example,    Zariski proved that if $R$ is regular, then $S(R)$ has unique factorization \cite[Appendix 5, Theorem 3]{ZS2} (a more recent proof is given by Huneke in \cite{Hu}). Moreover, in \cite[Corollary to Theorem 5]{C5} the first author shows that if  $R$ is complete  with algebraically closed residue field, then $S(R)$ has unique factorization if and only if $R$ is a UFD, where necessity follows from \cite[Theorem 5]{C5} and sufficiency  from 
\cite[Theorem 20]{L}.

Since  $S(R)$ satisfies the cancellation law \cite[Exercise 1.2]{HS}, it can be embedded in an Abelian group $G(R)$, which is the group defined by taking fractions of $S(R)$.  We note that the Cohen-Macaulay assumption in \cite[p.461]{ReSh} is not necessary for this conclusion.   
We  denote the class of the ideal $I$ in $S(R)$ by $\overline{I}$, the integral closure of $I$. Rees and Sharp define 
the symmetric bilinear function
$e^*$ on $G(R)\times G(R)$ by 
$$
e^*(\overline{I_1}\,\overline{I_2}^{-1}
\mid
\overline{J_1}\,\overline{J_2}^{-1})
=e(I_1,J_1)-e(I_1,J_2)-e(I_2,J_1)+e(I_2,J_2)
$$
and consider 
the associated quadratic form $q^*$ on $G(R)$, defined by
$$
q^*(x)=e^*(x,x).
$$
In  \cite[Theorem 5.4]{ReSh} they show  $q^*$ is positive definite with the assumption that $R$ is a Cohen-Macaulay local ring. 
If  $R$ is is a domain, 
this theorem  
is in fact a consequence of the negative definiteness of the intersection matrix of a resolution of singularities, as we now show.

Let $\pi:Y\rightarrow \Spec(R)$ be a birational proper morphism.  On   \cite[p. 223]{L} an intersection product 
$$
(F\cdot C)=\chi_{\kk}(\mathcal O_Y(F)\otimes\mathcal O_C)-\chi_{\kk}(\mathcal O_C)
$$
is defined for a Cartier divisor $F$ on $Y$, which we assume has  support above $\fm_R$, and 
a curve $C$ on $Y$ with  support above $\fm_R$. A curve is defined to be a closed one dimensional subscheme of $\pi^{-1}(\fm_R)$. By the definition, 
$$
(F\cdot C)=\chi_{\kk}(\mathcal O_Y(F)\otimes\mathcal O_C)-\chi_{\kk}(\mathcal O_C)
=I(\mathcal O_Y(F)\otimes\mathcal O_C)_{\kk}=(F\cdot C)_R,
$$
where the second equality is by \cite[Definition B.8]{Kl2} and the definition of $c_1(\mathcal L)\mathcal O_C$ before \cite[Definition B.2]{Kl2}. Thus the intersection product used in \cite{L} is the same as the one used in this paper in the case of two dimensional local rings.

 A resolution of singularities of a scheme $X$ is a scheme $Y$ with a proper birational morphism $Y\rightarrow X$ such that $\mathcal O_{Y,p}$ is a regular local ring for all $p\in Y$. 
The existence of resolution of singularities of two dimensional reduced excellent schemes is proven (with very different proofs) in \cite{L2} and \cite{CJS}. An exposition of \cite{L2} is given in \cite{Artin}.

The following theorem is a statement in our language of the fact that the intersection matrix of a resolution of singularities of a surface singularity is negative definite \cite{Mu2}, \cite{L}.

\begin{theorem}\label{CorNegDef} 
Let $(R,\fm_R,\kk)$ be a $2$-dimensional excellent  local domain. Let $\pi:Y\rightarrow \Spec(R)$ be a birational proper morphism with $Y$  nonsingular. Let $E_1,\ldots,E_r$ be the integral curves of $Y$ which lie above $\fm_R$.  Then the intersection matrix $M=\left((E_i\cdot E_j)_R\right)$ is negative definite. 
In particular, the quadratic form 
$Q(D)=(D\cdot D)_R$ on the free abelian group of exceptional divisors of $\pi$ is negative definite.
\end{theorem}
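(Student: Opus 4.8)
The plan is to reduce the statement to the classical theorem of Mumford \cite{Mu2} and Lipman \cite{L} on resolutions of normal surface singularities, after first matching our intersection product with theirs. Since $Y$ is nonsingular and $\pi$ is proper and birational, $\pi\colon Y\to\Spec(R)$ is a resolution of singularities of $\Spec(R)$, and the $E_i$ are exactly the integral curves in the exceptional fiber $\pi^{-1}(\fm_R)$. As recorded in the discussion preceding the theorem, for curves $E_i,E_j$ supported above $\fm_R$ one has $(E_i\cdot E_j)_R=\chi_\kk(\mathcal O_Y(E_i)\otimes\mathcal O_{E_j})-\chi_\kk(\mathcal O_{E_j})$, which is precisely the intersection number used in \cite{L}. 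Thus $M=\big((E_i\cdot E_j)_R\big)$ is literally Lipman's intersection matrix, and it suffices to invoke negative definiteness in that setting.

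First I would dispose of the case that $R$ is normal: then $\pi$ is a resolution of a normal $2$-dimensional excellent local domain and negative definiteness of $M$ is exactly \cite{Mu2}, \cite{L}. To handle a general domain $R$, I would pass to the normalization. Since $R$ is excellent, its normalization $\overline R$ is module-finite over $R$ and semilocal; let $\fn_1,\ldots,\fn_s$ be its maximal ideals, all lying over $\fm_R$. As $\Spec(\overline R)\to\Spec(R)$ and $Y\to\Spec(R)$ are both birational, the induced morphism $\psi\colon Y\to\Spec(\overline R)$ is proper and birational with $Y$ still nonsingular, so $\psi$ is a resolution of $\Spec(\overline R)$.

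Now the exceptional fiber decomposes as a disjoint union $\pi^{-1}(\fm_R)=\bigsqcup_{k=1}^{s}\psi^{-1}(\fn_k)$, so each $E_i$ lies over a unique $\fn_k$; grouping the $E_i$ by this $k$ makes $M$ block-diagonal, because if $E_i$ and $E_j$ lie over distinct $\fn_k,\fn_l$ then they are disjoint and $(E_i\cdot E_j)_R=0$. For the $k$-th block I would use the change-of-residue-field bookkeeping of \autoref{SecRelInt} (cf. the discussion after \autoref{Req16} and \autoref{Theoremnormal*}): since each such $E_i$ is a $\kk_k$-variety with $\kk_k=\overline R/\fn_k$ a finite extension of $\kk$, one has $(E_i\cdot E_j)_R=[\kk_k:\kk]\,(E_i\cdot E_j)_{\overline R_{\fn_k}}$, where the right-hand side is computed for the resolution $Y\times_{\Spec(\overline R)}\Spec(\overline R_{\fn_k})\to\Spec(\overline R_{\fn_k})$ of the normal local domain $\overline R_{\fn_k}$. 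By the normal case that block is the positive scalar $[\kk_k:\kk]$ times a negative definite matrix, hence negative definite, and therefore $M$ is negative definite. Finally, the \emph{in particular} statement is immediate: for an exceptional divisor $D=\sum_i a_iE_i$ with $(a_1,\ldots,a_r)\neq 0$, multilinearity (\autoref{CorR18}) gives $Q(D)=(D\cdot D)_R=\sum_{i,j}a_ia_j(E_i\cdot E_j)_R$, which is the value of the quadratic form attached to $M$ and is therefore negative.

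The main obstacle I anticipate is the reduction to the normal case: one must check that localizing $\psi$ at each $\fn_k$ genuinely yields a resolution of $\Spec(\overline R_{\fn_k})$, and that the intersection numbers rescale by exactly the residue degree $[\kk_k:\kk]$ under the base change from $\kk$ to $\kk_k$, so that the sign of the quadratic form is preserved block by block. Once this localization and base-change bookkeeping, together with the matching of intersection products, is in place, the negative definiteness is entirely carried by the cited theorems of Mumford and Lipman.
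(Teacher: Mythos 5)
Your proposal is correct and follows essentially the same route as the paper: both pass to the normalization $\overline R$, observe that the exceptional curves group by the maximal ideals of $\overline R$ so the intersection matrix becomes block-diagonal, rescale each block by the residue degree $[\kk_k:\kk]$ via \autoref{Theoremnormal*}, and invoke Lipman's negative definiteness (\cite[Lemma 14.1]{L}) for the resulting resolutions of the normal local domains $\overline R_{\fn_k}$. The only cosmetic difference is that the paper phrases the argument in terms of a single nonzero exceptional divisor $D=\sum D_i$ and shows $(D^2)_R<0$ directly, whereas you phrase it in terms of block-diagonality of the matrix $M$; these are equivalent.
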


\begin{proof} 
 Let $S=\overline{R}$ be the normalization of $R$. Let $\m_i$ be the maximal ideals of $S$ and let  $S_i=S_{\m_i}$ and $Y_i=Y\times_{\Spec(S)}\Spec(S_i)$.
 Let $D$ be a  nonzero divisor  on $X$ with support above $\fm_R$.    
We must show that $(D^2)<0$.

Write  $D=\sum_{i=1}^rD_i$, where  $D_i$ is supported above $\fm_i$ for each $i$, and so can be considered to be a Cartier divisor on $Y_i$. 
We have that  
$$
(D^2)_R=\sum_{i=1}^r[S/\m_{i}:\kk](D_i^2)_{ S_i}
$$
by  \autoref{Theoremnormal*}.
By \cite[Lemma 14.1]{L},  for each $i$, the intersection matrix $\left((E_a\cdot E_b)_{S_i}\right)$ is negative definite; here  the intersection matrix is over the $E_a$ and $E_b$ that contract to $\fm_i$.  Since $(E_a\cdot E_b)=0$ if $E_a$ and $E_b$ contract to different points of $S$, we have that 
$(D^2)_R<0$.
\end{proof}

We derive two theorems in commutative algebra from \autoref{CorNegDef}.
The following  inequality is proved  for 2-dimensional Cohen-Macaulay analytic local rings in \cite{ELT} and for arbitrary two dimensional local rings in \cite{ReSh} and \cite[Lemma 17.7.1]{HS}.    

Our proof is similar to the one in \cite{ELT} but our proof  is true a little more generally. The proof in \cite{ELT}  reduces to an application of the theorem that the intersection matrix of a resolution of singularities is negative definite, which is the method we  also use  in our proof. 

\begin{corollary}\label{ineq2} Let $(R,\fm_R,\kk)$ be a 2-dimensional excellent local ring,   and let  $I$ and $J$ be $\fm_R$-primary ideals of $R$. Then
$$
e(I,J)^2\ls e(I)e(J).
$$
\end{corollary}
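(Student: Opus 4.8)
The plan is to combine the intersection-theoretic description of mixed multiplicities from \autoref{thm:MixedMult} with the negative definiteness of the intersection form from \autoref{CorNegDef}; the inequality then becomes a formal consequence of the Cauchy--Schwarz inequality for a definite quadratic form.

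First I would reduce to the case that $R$ is a $2$-dimensional excellent local domain. Passing to the quotients $R/P_i$ by the minimal primes $P_i$ with $\dim(R/P_i)=2$, each such quotient is again an excellent local domain. Combining \autoref{AssocForm} with \autoref{thm:MixedMult} (using that $i_{Y_i}^*(I\mathcal O_Y)=I\mathcal O_{Y_i}$ computes $e(I(R/P_i))$) yields a mixed associativity formula
$$
e(I^{[v_1]},J^{[v_2]})=\sum_i \ell_{R_{P_i}}(R_{P_i})\, e\big((I(R/P_i))^{[v_1]},(J(R/P_i))^{[v_2]}\big)
$$
for the relevant bidegrees $(v_1,v_2)\in\{(2,0),(1,1),(0,2)\}$. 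Writing $c_i=\ell_{R_{P_i}}(R_{P_i})\ge 0$, $\alpha_i=e(I(R/P_i))$, $\beta_i=e(J(R/P_i))$, $\gamma_i=e(I(R/P_i),J(R/P_i))$, once the domain case gives $\gamma_i^2\le\alpha_i\beta_i$ for each $i$, the nonnegativity of mixed multiplicities and the Cauchy--Schwarz inequality applied to the vectors $(\sqrt{c_i\alpha_i})_i$ and $(\sqrt{c_i\beta_i})_i$ give
$$
e(I,J)=\sum_i c_i\gamma_i\le \sum_i \sqrt{c_i\alpha_i}\,\sqrt{c_i\beta_i}\le \sqrt{\textstyle\sum_i c_i\alpha_i}\;\sqrt{\textstyle\sum_i c_i\beta_i}=\sqrt{e(I)e(J)},
$$
and squaring yields the claim, so everything reduces to the domain case.

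For $R$ a $2$-dimensional excellent local domain I would choose a proper birational $\pi\colon Y\to\Spec(R)$ with $Y$ nonsingular on which both $I\mathcal O_Y$ and $J\mathcal O_Y$ are invertible; such a $Y$ exists by first blowing up $I$, then blowing up $J$, and then resolving singularities (available for $2$-dimensional excellent schemes), since pullbacks of invertible sheaves stay invertible. Writing $I\mathcal O_Y=\mathcal O_Y(D_I)$ and $J\mathcal O_Y=\mathcal O_Y(D_J)$ with $D_I,D_J$ Cartier divisors supported above $\fm_R$, \autoref{thm:MixedMult} gives
$$
e(I)=-(D_I^2)_R,\qquad e(J)=-(D_J^2)_R,\qquad e(I,J)=-(D_I\cdot D_J)_R.
$$
By \autoref{CorNegDef} the symmetric bilinear form $B(D,D')=(D\cdot D')_R$ on the group of divisors supported above $\fm_R$ is negative definite, so $-B$ is positive definite, and ordinary Cauchy--Schwarz for $-B$ yields
$$
(D_I\cdot D_J)_R^2=B(D_I,D_J)^2\le B(D_I,D_I)\,B(D_J,D_J)=(D_I^2)_R\,(D_J^2)_R,
$$
which is exactly $e(I,J)^2\le e(I)e(J)$.

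The conceptual content sits entirely in \autoref{CorNegDef}, so the remaining work is bookkeeping. I expect the main obstacle to be the reduction to the domain case: one must carefully justify the mixed associativity formula through \autoref{thm:MixedMult} and \autoref{AssocForm}, and verify that the chosen model $Y$ is simultaneously nonsingular and carries $I\mathcal O_Y$, $J\mathcal O_Y$ as invertible sheaves, so that both \autoref{thm:MixedMult} and \autoref{CorNegDef} apply on the same $Y$.
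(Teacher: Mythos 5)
Your proof is correct, and its two essential ingredients --- the decomposition over the minimal primes via \autoref{AssocForm} and the negative definiteness of the exceptional intersection form from \autoref{CorNegDef} combined with Cauchy--Schwarz --- are exactly those of the paper. The difference is in how the non-domain case is aggregated. The paper never isolates the domain case as a separate statement: it resolves each component $Y_i$ by some $Z_i\rightarrow Y_i$, forms the single positive definite form $\phi=\sum_i a_i\phi_i$ on the product $\prod E(Z_i)$ of the real vector spaces spanned by the exceptional curves, and applies Cauchy--Schwarz once to $\phi$. You instead prove the scalar inequality $\gamma_i^2\ls\alpha_i\beta_i$ on each component and then recombine with a second, sequence-level Cauchy--Schwarz. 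Your version therefore needs one extra input that the paper's does not: to pass from $\gamma_i^2\ls\alpha_i\beta_i$ to $\gamma_i\ls\sqrt{\alpha_i\beta_i}$ you must know $\gamma_i=e(I(R/P_i),J(R/P_i))\gs 0$, which you correctly supply from the nonnegativity of mixed multiplicities (stated in \autoref{SecMix}); the paper's aggregate form bounds $\bigl(\sum_i a_iB_i(x_i,y_i)\bigr)^2$ directly without any sign condition on the individual cross terms. What your route buys in exchange is a cleanly quotable domain-case statement and a reduction step (the ``mixed associativity formula'') that is of independent use; what the paper's route buys is one fewer inequality and no appeal to nonnegativity of the mixed terms. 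Both the existence of your nonsingular model $Y$ (blow up $I$, then $J$, then resolve, using \cite{L2} or \cite{CJS}) and the identification $i_{Y_i}^*(I\mathcal O_Y)=I\mathcal O_{Y_i}$ needed for the reduction are legitimate and are the same verifications the paper carries out elsewhere (e.g.\ in its proof of the associativity formula), so I see no gap.
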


\begin{proof} Let $\pi:Y\rightarrow \Spec(R)$ be a birational projective morphism such that $I\mathcal O_Y$ and $J\mathcal O_Y$ are invertible. Let $\{P_i\}$ be the minimal primes of $R$ such that $\dim(R/P_i)=2$ and $Y_i$ be the integral component of $Y$ which dominates $R/P_i$. Let $i_{Y_i}:Y_i\rightarrow Y$ be the natural inclusion. Let $a_i=\ell_{R_{P_i}}(R_{P_i})$.

Let $F_1, F_2$ be Cartier divisors on $Y$ that are supported above $\fm_R$. Then
\begin{equation}\label{N25}
(F_1\cdot F_2)_R
=\sum a_i(i_{Y_i}^*F_1\cdot i_{Y_i}^*F_2)_{R/P_i}
\end{equation}
by  \autoref{AssocForm}. For each $i$, let $Z_i\rightarrow Y_i$ be a birational projective morphism that is a resolution of singularities of $Y_i$, so we have a birational projective morphism 
$$
\prod Z_i\rightarrow \prod Y_i.
$$
Composing with the natural proper morphism $\prod Y_i\rightarrow Y_{\rm red}$, we have that $\prod Z_i\rightarrow Y_{\rm red}$ is a resolution of singularities.

Let $\{E_{i,j}\}$ be the prime  divisors of $Z_i$ which contract to $\fm_{R/P_i}$ in $\Spec(R/P_i)$ and $E(Z_i)$ be the real vector space with basis $\{E_{i,j}\}$. The negative of the intersection product, $-(G,H)_{R/P_i}$ on $Z_i$, induces a positive definite symmetric bilinear form $\phi_i$ on $E(Z_i)\times E(Z_i)$ by  \autoref{CorNegDef}. Thus $\phi=\sum a_i\phi_i$ defines a positive definite form on $(\prod E(Z_i))\times (\prod E(Z_i))$.

Thus for $G_1,G_2\in \prod E(Z_i)$, 
\begin{equation}\label{N26}
\phi(G_1,G_2)^2\ls \phi(G_1,G_1)\phi(G_2,G_2)
\end{equation}
by the Schwartz inequality (see  \cite[p. 580]{Lang}; there is a typo in the proof there: one should take $\alpha=\sqrt{\langle y,y\rangle}$ and $\beta=-\sqrt{\langle x,x\rangle}$).

Let $F_1$ and $F_2$ be  Cartier divisors on $Y$ defined by $I\mathcal O_Y=\mathcal O_Y(F_1)$ and $J\mathcal O_Y=\mathcal O_Y(F_2)$. 
By \autoref{thm:MixedMult},  \autoref{ThmR1}, \autoref{TheoremMultInt},  \autoref{N25}, and \autoref{N26}, 
\begin{align*}
e(I,J)^2=(F_1,F_2)_R^2
&=(\sum a_i(i_{Y_i}^*F_1\cdot i_{Y_i}^*F_2)_{R/P_i})^2\\
&= (\sum a_i(\alpha_i^*i_{Y_i}^*F_1\cdot  \alpha_i^*i_{Y_i}^*F_2)_{R/P_i})^2\\
&=\phi(\{\alpha_i^*i_{Y_i}^*F_1\},\{\alpha_i^*i_{Y_i}^*F_2\})^2\\
&\ls \phi(\{\alpha_i^*i_{Y_i}^*F_1\},\{\alpha_i^*i_{Y_i}^*F_1\})
\phi(\{\alpha_i^*F_2\},\{\alpha_i^*i_{Y_i}^*F_2\})\\
&=
(\sum a_i(\alpha_i^*i_{Y_i}^*F_1,\alpha_i^*i_{Y_i}^*F_1)_{R/P_i})
(\sum a_i(\alpha_i^*i_{Y_i}^*F_2,\alpha_i^*i_{Y_i}^*F_2)_{R/P_i})\\
&=
(\sum a_i(i_{Y_i}^*F_1,i_{Y_i}^*F_1)_{R/P_i})
(\sum a_i(i_{Y_i}^*F_2,i_{Y_i}^*F_2)_{R/P_i})\\
&=(F_1^2)_R(F_2^2)_R=e(I)e(J).
\end{align*}
\end{proof}

The following corollary proves \cite[Theorem 5.4]{ReSh} for excellent local domains.

\begin{corollary}\label{thm:2dim_pos_def}
Let $(R,\fm_R,\kk)$ be a $2$-dimensional excellent local domain. Then the  quadratic form $q^*$ on $G(R)$ is positive definite.
\end{corollary}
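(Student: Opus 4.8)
The plan is to prove positive definiteness directly, by showing $q^*(x)>0$ for every nonzero $x\in G(R)$, realizing $q^*(x)$ as the negative of a self-intersection on a resolution and then invoking the negative definiteness of \autoref{CorNegDef}. Note that the Cauchy--Schwarz consequence of \autoref{ineq2}, namely $q^*(\overline I\,\overline J^{-1})=e(I)-2e(I,J)+e(J)\ge(\sqrt{e(I)}-\sqrt{e(J)})^2\ge0$, only yields positive \emph{semi}definiteness, so a genuinely sharper input is needed.

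First I would fix a nonzero $x\in G(R)$ and write $x=\overline I\,\overline J^{-1}$ for $\fm_R$-primary ideals $I,J$; this is possible since $G(R)$ is the group of fractions of $S(R)$, and $x\neq0$ means precisely $\overline I\neq\overline J$. Expanding the definition of $e^*$ and using its symmetry together with $e(I,I)=e(I)$ (two copies of $I$ give the ordinary Hilbert--Samuel multiplicity), one obtains $q^*(x)=e(I)-2e(I,J)+e(J)$. Next I would choose a resolution of singularities $\pi\colon Y\to\Spec(R)$, which exists because $\Spec(R)$ is an integral $2$-dimensional excellent scheme \cite{L2,CJS}, arranged to dominate the blowups of $I$ and of $J$ (e.g.\ resolve the blowup of $I$ followed by the blowup of $J\mathcal O$). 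Then $I\mathcal O_Y=\mathcal O_Y(F_I)$ and $J\mathcal O_Y=\mathcal O_Y(F_J)$ are invertible, with $F_I,F_J$ Cartier divisors supported above $\fm_R$ since $I,J$ are $\fm_R$-primary. By \autoref{TheoremMultInt} and \autoref{thm:MixedMult} we have $e(I)=-(F_I^2)_R$, $e(J)=-(F_J^2)_R$, and $e(I,J)=-(F_I\cdot F_J)_R$, and by the bilinearity and symmetry of the product (\autoref{CorR18}) these combine into
$$
q^*(x)=-\big((F_I-F_J)^2\big)_R=-(D^2)_R,\qquad D=F_I-F_J,
$$
where $D$ lies in the free abelian group generated by the integral curves $E_i$ of $Y$ above $\fm_R$. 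By \autoref{CorNegDef} the form $Q(D)=(D^2)_R$ is negative definite, so $q^*(x)=-(D^2)_R>0$ as soon as $D\neq0$.

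The hard part is therefore to verify that $D\neq0$ whenever $x\neq0$, i.e.\ that $F_I=F_J$ forces $\overline I=\overline J$; this is exactly the injectivity of the map sending a class in $G(R)$ to its divisor on $Y$. For this I would use that $F_I$ records precisely the orders $\ord_{E_i}(I)=v_{E_i}(I)$ along the exceptional curves, and that for any valuation $v$ nonnegative on $R$ one has $v(\overline I)=v(I)$ (from the integral-dependence relation). Consequently $\overline I=\bigcap_i\{f\in R:v_{E_i}(f)\ge v_{E_i}(I)\}$, because the Rees valuations of $I$ occur among the $v_{E_i}$: since $R$ is an excellent domain it is analytically unramified, and as $Y$ is nonsingular and dominates the blowup of $I$ it factors through the normalized blowup $\Proj(\oplus_{n\gs0}\overline{I^n})$, whose exceptional curves carry the Rees valuations by \autoref{Reesval}. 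The same description applies to $J$, so $F_I=F_J$ gives $v_{E_i}(I)=v_{E_i}(J)$ for all $i$ and hence $\overline I=\overline J$, contradicting $x\neq0$. Therefore $D\neq0$, and $q^*(x)>0$ for every nonzero $x$, which is the asserted positive definiteness of $q^*$ on $G(R)$.
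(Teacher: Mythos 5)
Your proof is correct and follows essentially the same route as the paper: write $q^*(\overline I\,\overline J^{-1})$ as $-(D^2)_R$ for the exceptional divisor $D=F_I-F_J$ on a resolution, using \autoref{thm:MixedMult} and the bilinearity of \autoref{CorR18}, and then invoke the negative definiteness of \autoref{CorNegDef}. The one thing you do beyond the paper's proof is to verify explicitly that $D\neq 0$ when $\overline I\neq\overline J$ (via the Rees-valuation description of integral closures); the paper leaves this nondegeneracy step implicit in \autoref{thm:2dim_pos_def} and only supplies the corresponding injectivity argument later, in the proof of \autoref{PropG}, so making it explicit here is a genuine (and welcome) improvement in completeness.
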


\begin{proof}
Let $I,J$ be $\fm_R$-primary ideals of $R$ and 
let $Y\rightarrow\Spec(R)$ be such that $IJ\mathcal O_Y$ is invertible with $Y$ nonsingular. Writing
$
I\mathcal O_Y=\mathcal O_Y(-D)$ and
$J\mathcal O_Y=\mathcal O_Y(-E)$ 
we have that
\begin{align*}
e^*(\overline{I}\,\overline{J}^{-1}
&\mid
\overline{I}\,\overline{J}^{-1})\\
&=
e(I,I)-e(I,J)-e(J,I)+e(J,J)\\
&=-(\mathcal O_Y(-D)^2)_R+(\mathcal O_Y(-D)\cdot \mathcal O_Y(-E))_R+(\mathcal O_Y(-E)\cdot\mathcal O_Y(-D))_R-(\mathcal O_Y(-E)^2)_R\\
&=
-(\mathcal O_Y(E-D)\cdot \mathcal O_Y(E-D))_R,
\end{align*}
where the second equality is by \autoref{thm:MixedMult} and the third one is by   \autoref{CorR18}.
The conclusion now follows from  \autoref{CorNegDef}.
\end{proof}

\begin{remark}
\autoref{thm:2dim_pos_def} raises the natural question of whether in dimension $d\gs 3$ the analogous symmetric multilinear form 
$$e^*(\overline{I}\,\overline{J}^{-1},\ldots, \overline{I}\,\overline{J}^{-1})
=
\sum_{i=1}^d(-1)^{d-i}\binom{d}{i}e(I^{[i]}, J^{[d-i]})
$$
is positive definite. However, already for $d=3$ one observes from 
$$e^*(\overline{I}\,\overline{J}^{-1},\overline{I}\,\overline{J}^{-1},\overline{I}\,\overline{J}^{-1})
=
e(I^{[3]})-3e(I^{[2]},J)+3e(I,J^{[2]})-e(J^{[3]})
$$
that this is not the case, as reversing the roles of $I$ and $J$ changes the sign of the expression.
\end{remark}

Let $(R,\fm_R,\kk)$ be an excellent normal $2$-dimensional local ring. 
We now give a simple geometric  description of  the group $G(R)$. 
Let $f:X\rightarrow \Spec(R)$ be a resolution of singularities and let $H_X$ be the free abelian group on the prime exceptional divisors of $f$ (the prime divisors on $X$ that contract to $\m_R$).
Suppose that $g:Y\rightarrow \Spec(R)$ is another resolution of singularities with associated group $H_Y$ and that there exists a morphism $h:Y\rightarrow X$  factoring $g$ (so that $g=f\circ h$). Then   we have an injective group homomorphism $h^*:H_X\rightarrow H_Y$ defined by $D\mapsto h^*(D)$ for $D\in H_X$, where $h^*(D)$ is defined by $h^*\mathcal O_X(D)\cong \mathcal O_Y(h^*(D))$. 

Given resolutions of singularities $f:X\rightarrow \Spec(R)$ and $g:Y\rightarrow \Spec(R)$, there exists a resolution of singularities $h:Z\rightarrow \Spec(R)$ such that $h$ factors through $Y$ and $X$.

\begin{lemma}\label{lem:unique} Suppose that $f:X\rightarrow \Spec(R)$ and $g:Y\rightarrow \Spec(R)$
are resolutions of singularities. Then there exists at most one morphism $h:Y\rightarrow X$ such that $g=fh$.
\end{lemma}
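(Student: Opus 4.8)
The plan is to reduce this to the standard fact that two morphisms into a separated scheme which agree on a schematically dense subscheme must coincide, the two inputs being that $Y$ is reduced and that $f,g$ are birational.

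First I would record the shape of the objects. Since $f$ and $g$ are resolutions of singularities of the normal local domain $R$, both $X$ and $Y$ are integral schemes that are proper and birational over $\Spec(R)$. In particular each is irreducible and reduced, each has function field equal to the fraction field $K$ of $R$, and the unique generic point $\eta_Y$ of $Y$ (resp.\ $\eta_X$ of $X$) maps to the generic point of $\Spec(R)$ with the induced local homomorphism being an isomorphism identifying $R(Y)$, $R(X)$ and $K$.

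Next, suppose $h_1,h_2\colon Y\to X$ both satisfy $g=f h_i$. At the generic point, the isomorphisms coming from birationality of $f$ and $g$ force each $h_i$ to send $\eta_Y$ to $\eta_X$ and to induce the identity on $K=R(Y)=R(X)$. Thus $h_1$ and $h_2$ agree, as morphisms of schemes, on the generic point of $Y$. I would then invoke separatedness: because $X\to\Spec(R)$ is proper it is separated, so the diagonal $\Delta_X\colon X\to X\times_{\Spec(R)}X$ is a closed immersion, and hence the equalizer $E=(h_1,h_2)^{-1}(\Delta_X)$ is a closed subscheme of $Y$. By the previous step $E$ contains $\eta_Y$, so by irreducibility its underlying closed set is all of $Y$; since $Y$ is reduced, the only closed subscheme of $Y$ with full support is $Y$ itself, whence $E=Y$ and $h_1=h_2$.

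The argument is essentially formal, and I do not expect a genuine obstacle. The one place requiring care is the middle step: checking that the two candidate morphisms genuinely restrict to the \emph{same} morphism (not merely the same point) on the generic fibre, which is exactly where birationality of both $f$ and $g$, together with the relation $f h_1 = g = f h_2$, is used. Everything after that is the usual "reduced source, separated target" uniqueness principle.
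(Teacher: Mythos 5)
Your proof is correct, but it takes a genuinely different route from the paper's. You run the textbook ``reduced source, separated target'' argument: both $h_1,h_2$ agree on the generic point, the equalizer $(h_1,h_2)^{-1}(\Delta_X)$ is closed because $X\rightarrow\Spec(R)$ is separated, and a closed subscheme of the integral scheme $Y$ containing $\eta_Y$ must be all of $Y$. The paper instead works valuation-theoretically: it identifies all local rings of $X$ and $Y$ with subrings of the common function field $K$, picks for each $q\in Y$ a valuation $v$ of $K$ dominating $\mathcal O_{Y,q}$, and uses the uniqueness of the center of $v$ on the proper $\Spec(R)$-scheme $X$ to force $h_1(q)=h_2(q)$; the ring maps are then equal because both are restrictions of the identity of $K$. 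These are really two incarnations of the same separatedness principle --- yours via the diagonal, the paper's via the valuative criterion --- and both work in the stated generality. Your version is shorter and avoids invoking the existence of a valuation dominating a given local ring; the paper's version is more hands-on and makes explicit that $h(q)$ is the unique center on $X$ of any valuation dominating $\mathcal O_{Y,q}$, which fits the valuation-centric viewpoint of the surrounding section. The only step you should spell out is why $h_i(\eta_Y)=\eta_X$: the fiber of $X$ over the generic point of $\Spec(R)$ consists of $\eta_X$ alone, since any point $x$ in that fiber has $\mathcal O_{X,x}$ a local domain containing $K$ with fraction field $K$, hence equal to $K$; this uses that $X$ is integral (regular and birational to the normal domain $R$). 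With that one-line verification added, your argument is complete.
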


\begin{proof} Suppose that $h_1:Y\rightarrow X$ and $h_2:Y\rightarrow X$ are two morphisms such that $g=fh_1$ and $g=fh_2$. Let $\alpha$ be the generic point of $X$ and $\beta$ be the generic point of $Y$. We have natural identifications of $\mathcal O_{X,\alpha}$ and $\mathcal O_{Y,\beta}$ with the quotient field $K$ of $R$, so we may regard each $h_i^*$ as being the identity map on $K$. In this way, we can regard the local rings of $X$ and $Y$ as being subrings of $K$, and if $h_i(q)=p$ for $q\in Y$ and $p\in X$, then
$h_i^*:\mathcal O_{X,p}\rightarrow \mathcal O_{Y,q}$ is an inclusion of local rings in $K$ such that 
$\mathcal O_{Y,q}$ dominates $\mathcal O_{X,p}$.

Let $q\in Y$ and $v$ be a valuation of $K$ which dominates $\mathcal O_{Y,q}$. There exists a unique point $p\in X$ such that $v$ dominates $\mathcal O_{X,p}$ since $f$ is proper over $\Spec(R)$ and $v$ dominates $R_{g(q)}$ ($g(q)$ is a prime ideal in $R$). Thus $p=h_1(q)=h_2(q)$, and so the inclusions $h_i^*:\mathcal O_{X,p}\rightarrow \mathcal O_{Y,q}$ are the same maps for all $p\in X$, and so $h_1=h_2$.
\end{proof}

Let 
$$
\mathcal H=\{H_X\mid f_X:X\rightarrow \Spec(R)\mbox{ is a resolution of singularities}\}
$$
with the homomorphisms $h^*:H_X\rightarrow H_Y$ if $h:Y\rightarrow X$ is a morphism such that $f_Y=f_Xh$ (which is unique if it exists by \autoref{lem:unique}).  
 Thus the elements of $\mathcal H$ and the morphisms $h^*:H_X\rightarrow H_Y$ form a directed system of groups.

Let 
$$
H=\lim_{\rightarrow}H_X
$$
be the direct limit of this system.

\begin{proposition}\label{PropG} With the notations above, the map 
$$
\Phi:G(R)\rightarrow H=\lim_{\rightarrow}H_X
$$
defined by 
$$
\Phi(\overline{I}\,\overline{J}^{-1})=[D],
$$
where $D$ is a divisor whose support lies above $\fm_R$ on a resolution of singularities $X\rightarrow \Spec(R)$. 
such that $I\mathcal O_X$ and $J\mathcal O_X$ are invertible 
and $(I\mathcal O_X)(J\mathcal O_X)^{-1}\cong \mathcal O_X(D)$,
is a well-defined group isomorphism.
\end{proposition}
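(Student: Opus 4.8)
The plan is to verify in turn that $\Phi$ is well defined, is a homomorphism, is injective, and is surjective; the substantive input throughout is the dictionary between integrally closed $\fm_R$-primary ideals and effective exceptional divisors on normal birational models. Two standing facts I would isolate first: (i) if $\psi\colon W\to\Spec(R)$ is proper birational with $W$ normal and $I\mathcal O_W$ invertible, then $W$ factors through the normalized blowup of $I$, so that $I\mathcal O_W=\overline I\mathcal O_W$ and $\Gamma(W,I\mathcal O_W)=\overline I$ (see \cite{HS}); (ii) the maps $h^*$ in the directed system are injective and compatible with the formation of $\mathcal O(\cdot)$, as recorded before \autoref{lem:unique}.

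For well-definedness I would argue in two steps. Fixing a representative $\overline I\,\overline J^{-1}$, resolutions on which $I\mathcal O_X$ and $J\mathcal O_X$ are both invertible exist (take a resolution of singularities of a projective birational model dominating the blowups of both $I$ and $J$, using \cite{L2}), and since $I,J$ are $\fm_R$-primary the divisors $D_I,D_J$ with $I\mathcal O_X=\mathcal O_X(-D_I)$, $J\mathcal O_X=\mathcal O_X(-D_J)$ lie in $H_X$; passing to a dominating resolution $h\colon Y\to X$ replaces $D=D_J-D_I$ by $h^*D$, so the class $[D]\in H$ is unchanged by fact (ii). For independence of the representative, if $\overline I\,\overline J^{-1}=\overline{I'}\,\overline{J'}^{-1}$ then $\overline{IJ'}=\overline{I'J}$; choosing $X$ on which all four ideals are invertible, fact (i) gives $IJ'\mathcal O_X=\overline{IJ'}\mathcal O_X=\overline{I'J}\mathcal O_X=I'J\mathcal O_X$, hence $D_I+D_{J'}=D_{I'}+D_J$ as exceptional divisors, i.e. $D_J-D_I=D_{J'}-D_{I'}$. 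The homomorphism property is then immediate from $(I_1I_2)\mathcal O_X=(I_1\mathcal O_X)(I_2\mathcal O_X)$, which turns products in $G(R)$ into sums of divisors in $H_X$.

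Injectivity I would deduce from facts (i) and (ii): if $\Phi(\overline I\,\overline J^{-1})=0$ then on a large resolution $D_J-D_I$ maps to $0$ under an injective $h^*$, so $D_I=D_J$, whence $I\mathcal O_X=J\mathcal O_X$ and therefore $\overline I=\Gamma(X,I\mathcal O_X)=\Gamma(X,J\mathcal O_X)=\overline J$. For surjectivity, given a class represented by $D\in H_X$, I would first produce an effective exceptional divisor $A$ of full support with $(-A)\cdot E_i>0$ for every exceptional curve $E_i$: since the intersection matrix is negative definite by \autoref{CorNegDef} and has nonnegative off-diagonal entries, solving $\sum_j a_j(E_j\cdot E_i)=-1$ yields an effective (rational, hence after scaling integral) such $A$, and because the fibers of $\pi$ are at most one-dimensional this positivity makes $\mathcal O_X(-A)$ $\pi$-ample. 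By Serre's theorem over the affine base $\Spec(R)$, for $n\gg0$ both $\mathcal O_X(-nA)$ and $\mathcal O_X(D-nA)$ are globally generated and (as $nA$ and $nA-D$ are effective) are subsheaves of $\mathcal O_X$; setting $I=\Gamma(X,\mathcal O_X(-nA))$ and $J=\Gamma(X,\mathcal O_X(D-nA))$ gives $\fm_R$-primary ideals with $I\mathcal O_X=\mathcal O_X(-nA)$ and $J\mathcal O_X=\mathcal O_X(D-nA)$, so that $\Phi(\overline J\,\overline I^{-1})=[D]$.

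The main obstacle is the surjectivity step, specifically the realization of an arbitrary exceptional class as a difference of ideal divisors: it requires both the construction of the $\pi$-ample exceptional $A$ (where negative definiteness via \autoref{CorNegDef} is essential) and the passage from $\pi$-ampleness to honest global generation of the sheaves $\mathcal O_X(-nA)$ and $\mathcal O_X(D-nA)$ for $n\gg0$, so that the ideals of global sections pull back exactly to these sheaves rather than to a smaller twist. The identification $\Gamma(X,I\mathcal O_X)=\overline I$ underlying injectivity and well-definedness is the other point that must be handled with care, since it is precisely what ties the algebraically defined group $G(R)$ to the geometry on $X$.
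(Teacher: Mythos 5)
Your proposal is correct and follows essentially the same route as the paper: injectivity via $\Gamma(X,I\mathcal O_X)=\overline{I}$, and surjectivity by using the negative definiteness of the exceptional intersection matrix (\autoref{CorNegDef}) to produce an ample exceptional divisor and then realizing $D$ as the difference of two divisors of the form $(\text{ideal})\mathcal O_X$ via Serre-type global generation. Your treatment of well-definedness (independence of the representative $\overline{I}\,\overline{J}^{-1}$, via $\overline{IJ'}=\overline{I'J}$ and $I\mathcal O_X=\overline{I}\mathcal O_X$ on normal models) is in fact spelled out more fully than in the paper, which compresses this into the single observation that $\Gamma(X,I\mathcal O_X)=\overline{I}$.
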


\begin{proof} $\Phi$ is a well-defined group homomorphism since $\Gamma(X,I\mathcal O_X)=\overline{I}$ and $\Gamma(X,J\mathcal O_X)=\overline{J}$.

Suppose that $\Phi(\overline{I}\,\overline{J}^{-1})=0$. There exists a resolution of singularities $X\rightarrow \Spec(R)$ such that $I\mathcal O_X$ and $J\mathcal O_X$ are invertible and 
$(I\mathcal O_X)(J\mathcal O_X)^{-1}\cong \mathcal O_X$. Writing $I\mathcal O_X=\mathcal O_X(D)$ and $I\mathcal O_X=\mathcal O_X(E)$ where the supports of $D$ and $E$ lie above $\fm_R$, we obtain that $D=E$ so that 
$\overline{I}=\Gamma(X,\mathcal O_X(D))=\overline{J}$.
Thus $\overline{I}\,\overline{J}^{-1}$ is the identity of $G(R)$, and so $\Phi$ is injective.

Now we show that $\Phi$ is surjective. Suppose $D\in H_X$ for some $X$.  Let $\{E_1,\ldots, E_r\}$ be the prime  divisors on $X$ that are supported above $\fm_R$. Since the intersection form on $X$ is negative definite  by \cite[Lemma 14.1]{L} (or  \autoref{CorNegDef}),  there exists $C\in H_X$ such that $(C\cdot E_i)>0$ 
for all $i$.  
By \cite[Lemma 14.9]{Bad}, $C=\sum a_iE_i$ with $a_i<0$ for all $i$; that is, $-C$ is effective. 
$\mathcal O_X(C)$ is ample on $X$ by \cite[Proposition 2.3]{CNag}. 
Since $C$ is ample, by standard facts about ample divisors \cite[Exercise II.7.5]{H}, we have that there exits $a>0$ such that $aC$ is very ample,  $aC+D$ is ample, and $-(aC+D)$ is effective. Further, there exists $n_0>0$ such that $n(aC+D)$ is very ample for $n\gs n_0$. Thus $D_1:=(n_0+1)(aC+D)$ and $D_2:=n_0(aC+D)+aC$ are very ample and $-D_1$ and $-D_2$ are effective. 
Let $I=\Gamma(X,\mathcal O_X(D_1))$ and $J=\Gamma(X,\mathcal O_X(D_2))$. $I$ and $J$ are $\fm_R$-primary ideals since $\mathcal O_X(-D_i)\subset \mathcal O_X$ for $i=1,2$ implies $\Gamma(X,\mathcal O_X(D_i))\subset \Gamma(X,\mathcal O_X)=R$ since $R$ is normal.
Then $I\mathcal O_X=\mathcal O_X(D_1)$ and $J\mathcal O_X=\mathcal O_X(D_2)$. 
Thus
$$
(I\mathcal O_X)(J\mathcal O_X)^{-1}\cong\mathcal O_X(D_1-D_2)=\mathcal O_X(D),
$$
and so $\Phi(\overline{I}\,\overline{J}^{-1})=[D]$, showing that $\Phi$ is surjective.
\end{proof}

%%%%%%%%%%%%%%%%%%%%%%%%%%%%
%\vskip 10truein
%%%%%%%%%%%
%%%%%%%%%%%%
%%%%%%%%%%%%

\begin{center}
	{\it Acknowledgments}
\end{center}

We acknowledge support by the NSF Grant  DMS \#1928930, while the authors were in residence at the Simons Laufer Mathematical Science Institute (formerly MSRI) in Berkeley, California, during the Spring 2024 semester.

\bibliographystyle{plain}

\end{document}